\numberwithin{equation}{section}
\g@addto@macro\th@plain{\thm@headpunct{}}
\numberwithin{equation}{section}
\newtheorem{thm}{Theorem}[section]
\newtheorem{lemma}[thm]{Lemma}
\newtheorem{Corollary}[thm]{Corollary}
\newtheorem{proposition}[thm]{Proposition}
\theoremstyle{definition}
\newtheorem{defin}[thm]{Definition}
\newtheorem{remark}[thm]{Remark}
\newtheorem{example}[thm]{Example}
\def \R {{\mathbb R}}
\def \eps {\varepsilon}
\newcommand{\dd}{\mathrm{d}}
\newcommand{\cM}{ {\mathcal M} }
\title[A phase transition for $\boxtimes$-convolution powers]{A phase transition for tails of the free multiplicative convolution powers}
\author[B. Ko\l{}odziejek]{Bartosz Ko\l{}odziejek}
\author[K. Szpojankowski]{Kamil Szpojankowski}
\email{b.kolodziejek@mini.pw.edu.pl}
\email{k.szpojankowski@mini.pw.edu.pl}
\address{
	Wydzia\l{} Matematyki i Nauk Informacyjnych\\
	Politechnika Warszawska\\
	ul. Koszykowa 75\\
	00-662 Warszawa, Poland.}
\keywords{Free multiplicative convolution; $S$-transform; tails; regular variation; infinite divisibility.}
\subjclass[2020]{Primary 46L54; secondary 40E05.}
\begin{document}

\begin{abstract}
	We study the behavior of the tail of a measure $\mu^{\boxtimes t}$, where $\boxtimes t$ is the $t$-fold free multiplicative convolution power for $t\geq 1$. We focus on the case where $\mu$ is a probability measure on the positive half-line with a regularly varying right tail i.e. of the form $x^{-\alpha} L(x)$, where $L$ is slowly varying. We obtain a phase transition in the behavior of the right tail of $\mu^{\boxplus t}$ between regimes $\alpha<1$ and $\alpha>1$. Our main tool is a description of regular variation of the tail of $\mu$ in terms of the behavior of the corresponding $S$-transform at $0^-$. We also describe the tails of $\boxtimes$ infinitely divisible measures in terms of the tails of corresponding L\'evy measure, treat symmetric measures with regularly varying tails and prove the free analog of the Breiman lemma.
\end{abstract}
\maketitle
\section{Introduction}
	
Consider the space $\cM_+$ of probability measures on $[0,+\infty)$. For $\mu,\nu\in \cM_+$ we have natural operations of convolutions: additive denoted by $\mu\ast\nu$ and multiplicative $\mu\circledast\nu$, which correspond to distributions of the sum and the product of independent random variables. One of the important characteristics of a probability measure $\mu\in\cM_+$ is its tail behavior at $+\infty$, more precisely one can ask at what rate the function $\bar{\mu}(t) := \mu\big((t,+\infty)\big)$ goes to zero when $t\to+\infty$. Tail behavior determines many properties of a measure like existence of moments or its domain of attraction. One can also ask how tails of $\mu\ast\nu$ and $\mu\circledast\nu$ behave, given that the tail asymptotics of $\mu$ and $\nu$ are known. Clearly, since the multiplicative convolution corresponds to the distribution of $X\cdot Y$ for independent $X,Y$ one can reduce it to the case of additive convolution upon taking the logarithm (where one considers the operation $\ast$ on the set $\cM$ of all probability measures on $\R$). Tail asymptotics of additive convolutions is a classical and well studied problem. 

In parallel to the classical theory one can consider a non-commutative probability space, where classical random variables are replaced by non-commutative operators. In non-commutative probability there are several possible notions of independence, among which the most prominent is freeness defined by Voiculescu \cite{DVV86}. Freeness allows to define counterparts of operations $\ast$ and $\circledast$, the free additive convolution $\boxplus$ and the free multiplicative convolution $\boxtimes$.
In this paper we focus on free multiplicative convolutions. Similarly as $\circledast$, the operation $\boxtimes$ is a binary operation defined on $\cM_+\times \cM_+$, taking values in $\cM_+$. Given $\mu,\nu\in\cM_+$ one can consider two freely independent operators $X,Y$ with respective distributions $\mu$ and $\nu$. Then $X^{1/2}Y X^{1/2}$ has the distribution $\mu\boxtimes\nu$. One should note that, despite the fact that $X$ and $Y$ above do not commute, the operation $\boxtimes$ is actually commutative i.e. we always have $\mu\boxtimes\nu=\nu\boxtimes\mu$. In \cite{DVV87} Voiculescu showed existence of a transform (so-called $S$-transform) which is multiplicative under the operation $\boxtimes$ in $\cM_+$, that is one has:
\begin{align} \label{eq:Strans}
	S_{\mu\boxtimes\nu}(t)=S_{\mu}(t)S_{\nu}(t)\quad \mbox{for } t\in(-\varepsilon,0) 
\end{align}
for some $\varepsilon>0$. In \cite{DVV87}, $S$-transform is considered in a more general framework, where it takes complex argument. For our purposes it is enough to consider the $S$-transform as a function of real argument. Initially free multiplicative convolution was defined for compactly supported measures, in \cite{BV93} the authors extended the definition of free convolutions to all probability measures. Since equation \eqref{eq:Strans} is the only fact from free probability theory which is relevant for our paper, we decided to not introduce the notion of free independence, instead we refer to one of the monographs \cite{MS17,NS06}. 

The free multiplicative convolution appears naturally in the theory of random matrices. Assume that compactly supported probability measures $\mu$ and $\nu$ are the a.s. weak limits of eigenvalue distributions of  sequences of $N\times N$ matrices as $N\to+\infty$. Let us denote these sequences by $(X_N)_N$ and $(Y_N)_N$ and assume that distribution of one of these sequences is invariant by conjugation with independent Haar unitary matrices. Then, $\mu\boxtimes \nu$ is a.s. the weak limit of the eigenvalue distribution of product $X_N^{1/2} Y_N X_N^{1/2}$ provided $X_N$ and $Y_N$ are independent in the classical sense and $X_N$ is positive definite for each $N\in\mathbb{N}$ \cite{Vo91}.

In contrast to the commutative case, in the framework of non-commutative probability one cannot simply reduce the operation $\boxtimes$ to the additive case. In fact the free multiplicative convolution has some surprising properties. Such differences are manifested for example in the context of infinitely divisible distributions. Denote by $ID(\ast)$ and $ID(\boxplus)$ the set of infinitely divisible measures under $\ast$ and $\boxplus$ respectively. It was observed in \cite{BPB} that there is a bijection $\mathbb{B}\colon ID(\ast)\to ID(\boxplus)$, such that for each $\mu\in ID(\ast)$ its domain of attraction is the same as the domain of attraction of $\mathbb{B}(\mu)\in ID(\boxplus)$. One can consider similar mapping between the sets $ID(\circledast)$ and $ID(\boxtimes)$. In \cite{BP00} the authors showed that such mapping can be defined, but fails to work as well as in the additive case. Another difference can be observed in the context of laws of large numbers. For $c>0$ we denote by $D_c(\mu)$ the pushforward of $\mu$ by the mapping $x\mapsto cx$, that is $D_c(\mu)(A)=\mu(A/c)$ for any Borel set $A$. Classical law of large numbers says that $D_{1/n}(\mu^{\ast n})\to\delta_\alpha$ weakly, where $\alpha$ is the mean value of $\mu$. A similar results holds for the free additive convolution, that is we have $D_{1/n}(\mu^{\boxplus n})\to\delta_\alpha$. The law of large numbers for classical multiplicative convolution follows again from the additive case, more precisely let $\psi_c(\mu)$ be the pushforward of $\mu$ corresponding to the mapping $x\mapsto x^{c}$ for $x,c>0$, then one immediately has $\psi_{1/n}(\mu^{\circledast n })\to \delta_\beta$ with $\beta=\exp\left(\int\log(t)\mu(\dd t)\right)$. For the free multiplicative case it was shown in \cite{HaM13} that $\psi_{1/n}(\mu^{\boxtimes n })\to\nu$ weakly, where in general $\nu$ is not a Dirac delta measure.

The surprising behavior of the free multiplicative convolution motivates further study of this operation, in particular the study of the related tail behavior. Tail behavior of the free additive convolutions was studied in \cite{HM13} in the context of so-called free subexponentiality. In particular the authors showed that if $\mu$ is such that its tail $\bar{\mu}$ is regularly varying then $\bar{\mu}^{\boxplus n}(t)\sim n \bar{\mu}(t)$, where $f(t) \sim g(t)$ means that the quotient $f(t)/g(t)$ goes to $1$ as $t$ tends to $+\infty$. Let us stress that such behavior coincides with the classical case of $\ast$. A related studies were performed for Boolean additive and multiplicative convolutions in \cite{CH18}.  

We also work in the framework of measures with regularly varying tails. Let us briefly recall some basic facts about regularly varying functions. A function $f$ is called regularly varying with index $\alpha$ if for every $\lambda>0$ one has $f(\lambda x )/f(x)\to \lambda^\alpha$ as $x\to+\infty$, when $\alpha=0$ then $f$ is called slowly varying. For $\rho\in \R$ we write $\mathcal{R}_\rho$ for the class of regularly varying functions with index $\rho$, which
consists of functions $f$ of the form $f(x) = x^\rho L(x)$ for some $L\in \mathcal{R}_0$.

\begin{defin}
	We say that $\mu$ has regularly varying tail at infinity with index $-\alpha\leq0$ if
	\begin{align}\label{Introeq:L}
		\bar{\mu}(x)=\mu\big((x,+\infty)\big)\sim \frac{L(x)}{x^\alpha}
	\end{align}
	for a slowly varying function $L$.
\end{defin}

Our main result is a free probability analogue of the following classical result. Point $(i)$ below follows from additive subexponentiality of the pushforward measure of $\mu$ under the $\log$, while $(ii)$ can be found e.g. in \cite{JM06}.
\begin{thm}\label{Introthm:classConv}\  
	\begin{enumerate}[(i)]
		\item Assume that $\mu\in\mathcal{M}_+$, $L\in\mathcal{R}_0$ and $\beta>0$. If $\bar{\mu}(x)\sim L(\log(x))/\log(x)^\beta$, then
		\[
		\mu^{\circledast n}\big((x,+\infty)\big)\sim n\, \bar{\mu}(x).
		\]
		\item 	If $\bar{\mu}(x)\sim c \,x^{-\alpha}$ with $c, \alpha>0$, then 
		\[
		\mu^{\circledast n}\big((x,+\infty)\big)\sim \frac{\alpha^{n-1} c^{n-1}}{(n-1)!} \log^{n-1}(x)\bar{\mu}(x).
		\] 
	\end{enumerate}
\end{thm}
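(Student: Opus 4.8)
The plan is to reduce both statements to the additive convolution $\ast$ via the logarithm. Let $\nu$ denote the pushforward of $\mu$ under $x\mapsto\log x$. Since the factors in $\mu^{\circledast n}$ are independent, $\log(X_1\cdots X_n)$ has law $\nu^{\ast n}$, and therefore
\[
\mu^{\circledast n}\big((x,+\infty)\big)=\nu^{\ast n}\big((\log x,+\infty)\big)=\bar{\nu}^{\ast n}(\log x),
\]
where I abbreviate $\bar{\nu}^{\ast n}(y):=\nu^{\ast n}\big((y,+\infty)\big)$. Everything then reduces to the right tail $\bar{\nu}(y)=\bar{\mu}(e^y)$ and to how it propagates under $\ast$.

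For $(i)$ the hypothesis $\bar{\mu}(x)\sim L(\log x)/(\log x)^\beta$ becomes $\bar{\nu}(y)\sim L(y)/y^\beta$, so $\bar{\nu}\in\mathcal{R}_{-\beta}$ with $\beta>0$. A distribution whose right tail is regularly varying with negative index is subexponential, and subexponentiality is a property of the right tail alone, so it is unaffected by the possibly unbounded left tail of $\nu$. The classical subexponential relation $\bar{\nu}^{\ast n}(y)\sim n\,\bar{\nu}(y)$ then applies, and substituting $y=\log x$ together with $\bar{\nu}(\log x)=\bar{\mu}(x)$ gives the claim.

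For $(ii)$ the hypothesis $\bar{\mu}(x)\sim c\,x^{-\alpha}$ becomes $\bar{\nu}(y)\sim c\,e^{-\alpha y}$, a purely exponential right tail. The decisive feature is that the critical moment is infinite, $\int e^{\alpha y}\,\nu(\dd y)=\int x^\alpha\,\mu(\dd x)=\E[X^\alpha]=+\infty$, which places $\nu$ at the \emph{boundary} of the convolution-equivalent class and is exactly the regime that produces a polynomial correction. The target, in additive form, is
\[
\bar{\nu}^{\ast n}(y)\sim\frac{(\alpha c)^{n-1}}{(n-1)!}\,y^{n-1}\,\bar{\nu}(y),
\]
which, transported back by $y=\log x$, is the assertion; this is the content of \cite{JM06}. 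I would establish it by induction on $n$ using $\bar{\nu}^{\ast n}(y)=\int\bar{\nu}^{\ast(n-1)}(y-t)\,\nu(\dd t)$: in the bulk region the two factors combine to $\sim\frac{(\alpha c)^{n-1}}{(n-2)!}\,c\,(y-t)^{n-2}e^{-\alpha y}$, whose integral over $t\in(0,y)$ contributes $\tfrac{y^{n-1}}{n-1}$ to leading order, reproducing the constant $\frac{(\alpha c)^{n-1}}{(n-1)!}$.

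The heuristic behind the exponential regime is that the cost $e^{-\alpha a}e^{-\alpha b}=e^{-\alpha y}$ of splitting $y=a+b$ is independent of the split, so every $(n-1)$-dimensional configuration of the summands contributes equally and the simplex volume $y^{n-1}/(n-1)!$ appears; this is the sharp contrast with the subexponential single-big-jump mechanism of part $(i)$. The main obstacle is making the induction rigorous without assuming a density for $\nu$: one must show that the edge contributions (summands near $0$ or near $y$, where neither asymptotic is valid) are negligible, and control the passage from the mere asymptotic $\bar{\nu}(y)\sim c\,e^{-\alpha y}$ to the convolution integral uniformly in the split. These estimates are standard for the exponential boundary class and are carried out in \cite{JM06}.
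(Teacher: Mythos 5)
Your proposal is correct and takes essentially the same route as the paper: the authors likewise reduce everything to the additive convolution via the logarithmic pushforward, obtain $(i)$ from subexponentiality of a distribution with regularly varying right tail, and attribute $(ii)$ to \cite{JM06}. In fact the paper offers only this two-line justification, so your induction sketch for the exponential-boundary case in $(ii)$ (with the correct constant $\tfrac{(\alpha c)^{n-1}}{(n-1)!}$ emerging from the simplex volume) supplies strictly more of the argument than the source does.
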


The behavior of the tails of the free multiplicative convolution is much more complicated. The result below is another evidence that the free multiplicative convolution behaves in a distinctive way. We observe a phase transition for \eqref{Introeq:L} between the cases $0<\alpha<1$ and $\alpha>1$. 
We observe that the case $\alpha=1$ with first moment finite is the same as $\alpha>1$. Moreover we describe the case $\alpha=1$ when the slowly varying function is constant, surprisingly this case is very similar to the classical case. Also in the case when the tail is slowly varying, corresponding to $\alpha=0$, we work with the special case $\bar{\mu}(x)\sim L(\log(x))/\log(x)^\beta$ similarly as in the theorem above.
Recall that for the free multiplicative convolution one can consider fractional convolution powers $\mu^{\boxtimes t}$ for any $t\geq 1$, see \cite{BB05}. 
\newpage\begin{thm}\label{Introthm:1.4}\  
	\begin{enumerate}[(i)]
		\item If $\mu\in\mathcal{M}_+$ and $\bar{\mu}(x)\sim L(\log(x))/\log(x)^\beta$ for $L\in\mathcal{R}_0$ and $\beta>0$, then for $t\geq 1$,
		\[
		\mu^{\boxtimes t}\big((x,+\infty)\big)\sim t^\beta\,\mu\big((x,+\infty)\big).
		\]
		\item  Assume $\mu\in\mathcal{M}_+$  satisfies \eqref{Introeq:L} for $\alpha\in(0,1)$. Then for $t\geq 1$ we have $\mu^{\boxtimes t}\big((\cdot,+\infty)\big)\in \mathcal{R}_{-\alpha_t}$, where 
		\[
		\alpha_t=\frac{\alpha}{\alpha+t(1-\alpha)}.
		\]
		In particular, if $\bar{\mu}(x)\sim c/x^{\alpha}$ for some $\alpha\in(0,1)$ and $c>0$, then for $t\geq 1$ one has,
		\begin{align*}
			\mu^{\boxtimes t}\big((x,+\infty)\big)\sim \frac{c_{t,\alpha}}{x^{\alpha_t}},
		\end{align*}
		where 
		\[
		c_{t,\alpha} = \left(c\frac{\pi\alpha}{\sin(\pi\alpha)}\right)^{t/(\alpha+t(1-\alpha))} \frac{\sin(\pi\alpha_t)}{\pi\alpha_t}.
		\]
		\item If $\mu\in\mathcal{M}_+$ is such that $\bar{\mu}(x)\sim c/x$ with $c>0$, then for $t\geq 1$,
		\begin{align*}
			\mu^{\boxtimes t}\big((x,+\infty)\big)\sim c^{t-1}t \log(x)^{t-1}\mu\big((x,+\infty)\big).
		\end{align*}
\item	Let $\alpha\geq1$ and assume $\mu\in\mathcal{M}_+$  satisfies \eqref{Introeq:L} and has first moment (denoted $m_1(\mu)$) finite.   Then for $t\geq 1$ we have 
		\begin{align*}
			\mu^{\boxtimes t}\big((x,+\infty)\big)\sim t\, m_1(\mu)^{\alpha(t-1)} \mu\big((x,+\infty)\big).
		\end{align*}
	\end{enumerate}
\end{thm}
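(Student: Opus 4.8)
The plan is to route everything through the $S$-transform, using the multiplicativity $S_{\mu^{\boxtimes t}}=S_\mu^{\,t}$, which holds for every real $t\geq 1$ by \cite{BB05}, together with a dictionary converting regular variation of $\bar\mu$ into the local behaviour of $S_\mu$ at $0^-$. I will use two normalisation facts valid for measures with finite nonzero mean: $S_\mu(0^-)=1/m_1(\mu)$, and that means are multiplicative, so $m_1(\mu^{\boxtimes t})=m_1(\mu)^t$ (the latter follows from the former together with $S_{\mu^{\boxtimes t}}(0)=S_\mu(0)^t$). The heart of the argument is to show that the single subleading, non-analytic term of $S_\mu$ at $0^-$ both determines the tail and transforms predictably under $\,\cdot^{\,t}$.

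First I would set up the dictionary for $\alpha\geq 1$. Writing $w=\psi_\mu(-s)$ with $s\to 0^+$ and $h(s):=-\psi_\mu(-s)=\int_0^\infty \frac{st}{1+st}\,\mu(\dd t)$, the elementary identity $h(s)=m_1 s-s^2\phi(s)$ holds with $\phi(s):=\int_0^\infty \frac{t^2}{1+st}\,\mu(\dd t)$. Integrating by parts, $\phi(s)=s^{-2}\int_0^\infty \frac{u(2+u)}{(1+u)^2}\,\bar\mu(u/s)\,\dd u$, so by Karamata's theorem $\phi$ is regularly varying of index $\alpha-2$ with $\phi(s)\sim C_\alpha\, s^{\alpha-2}L(1/s)$ for $1<\alpha<2$ and an explicit constant $C_\alpha>0$. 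Feeding this into $S_\mu(w)=\frac{1+w}{w}\chi_\mu(w)$ through $w=\psi_\mu(-s)\sim -m_1 s$ gives, after simplification,
\[
  S_\mu(w)-\frac{1}{m_1}\ \sim\ \frac{C_\alpha}{m_1}\,\frac{\bar\mu(m_1/|w|)}{|w|},\qquad w\to 0^-,
\]
a regularly varying remainder of index $\alpha-1$ in $|w|$ carrying the tail. For $\alpha\geq 2$ the expansion of $\phi$ produces in addition genuine polynomial terms $a_1 w+a_2 w^2+\cdots$ coming from the finite moments of $\mu$, which must be subtracted before this remainder of index $\alpha-1$ is exposed, and one needs the Tauberian converse allowing one to read $\bar\mu$ back off from the remainder.

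Granting the dictionary the conclusion is quick. Put $\nu=\mu^{\boxtimes t}$, so $m_1(\nu)=m_1^t$ and $S_\nu=S_\mu^{\,t}$, and write $S_\mu=\frac{1}{m_1}+R$ with $R$ the remainder above (of index $\alpha-1$, so $R\to 0$). Then
\[
  S_\nu(w)=\Big(\tfrac{1}{m_1}+R(w)\Big)^{t}
  =\frac{1}{m_1^{t}}\big(1+m_1 R(w)\big)^{t}
  =\frac{1}{m_1^{t}}+\frac{t}{m_1^{t-1}}R(w)+O\big(R(w)^2\big).
\]
Since $R^2$ has index $2(\alpha-1)>\alpha-1$, and the products of $R$ with the subtracted polynomial terms have index $>\alpha-1$, all are subdominant; hence the leading non-analytic part of $S_\nu-1/m_1^{t}$ is exactly $\frac{t}{m_1^{t-1}}R$. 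Applying the dictionary in reverse to $\nu$ (with mean $m_1^t$ and the same tail index $\alpha$, hence the same constant $C_\alpha$) yields
\[
  \frac{C_\alpha}{m_1^{t}}\,\frac{\bar\nu(m_1^{t}/|w|)}{|w|}\ \sim\ \frac{t}{m_1^{t-1}}\cdot\frac{C_\alpha}{m_1}\,\frac{\bar\mu(m_1/|w|)}{|w|},
\]
that is $\bar\nu(m_1^{t}/|w|)\sim t\,\bar\mu(m_1/|w|)$. Setting $x=m_1^{t}/|w|$ this becomes $\bar\nu(x)\sim t\,\bar\mu\big(x/m_1^{t-1}\big)$, and slow variation of $L$ gives $\bar\mu(x/m_1^{t-1})\sim m_1^{\alpha(t-1)}\bar\mu(x)$, so $\bar\nu(x)\sim t\,m_1^{\alpha(t-1)}\bar\mu(x)$, which is the claim.

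The main obstacle is the dictionary rather than the short computation that follows it. Two points need care: the Tauberian direction, recovering the exact tail constant of $\nu$ from the regularly varying remainder of $S_\nu$ at $0^-$, which requires a genuine Tauberian theorem and not merely an Abelian estimate; and the bookkeeping when $\alpha\geq 2$ and at integer $\alpha$, where the polynomial moment terms must be correctly identified and peeled off, and where the boundary case $\alpha=1$ with finite mean (for which $C_\alpha$ diverges and logarithmic corrections enter) must be treated separately, consistently with $t\,m_1^{\alpha(t-1)}$ at $\alpha=1$. By contrast the expansion $S_\mu^{\,t}=\frac{1}{m_1^t}+\frac{t}{m_1^{t-1}}R+(\text{subdominant})$ and the cancellation of $C_\alpha$ in the final matching are robust, so once the dictionary is available in both directions the proof closes with no further input.
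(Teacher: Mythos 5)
Your proposal addresses only part (iv) of the theorem, and even there it defers the essential technical content. The statement has four parts, and the phase-transition phenomenon lives in parts (i)--(iii), where your framework does not apply: for $\alpha\in(0,1)$ one has $m_1(\mu)=+\infty$ and $S_\mu(0^-)=0$, so the decomposition $S_\mu=\tfrac1{m_1}+R$ with a small remainder $R$ is meaningless, and indeed the tail index genuinely changes ($\alpha\mapsto\alpha_t$) rather than being preserved up to a multiplicative constant. The paper handles $\alpha\in(0,1)$ by showing that $S_\mu(-1/x)$ is itself regularly varying of index $1-1/\alpha$ (Theorem \ref{thm:01}), so that raising to the power $t$ changes the exponent of $x$ and hence the tail index; the slowly varying case needs the $K\mathcal{R}_\infty$ machinery of Theorem \ref{thm:a0}, and $\bar\mu(x)\sim c/x$ needs the de Haan class $\Pi_L$ of Theorem \ref{thm:alpha1}, where the factor $\log(x)^{t-1}$ in part (iii) arises from applying $z^t-1\sim t(z-1)$ to increments of $1/S_\mu(-1/x)^t\sim (c\log x)^t$. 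None of this is recoverable from your $\alpha\geq1$ dictionary, so three quarters of the statement is simply not proved.

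For part (iv) your route is essentially the paper's: convert the tail into the behaviour of the non-analytic remainder of $S_\mu$ at $0^-$, use $S_{\mu^{\boxtimes t}}=S_\mu^{t}$, observe that the leading non-analytic contribution of $S_\mu^t$ is $t\,S_\mu(0^-)^{t-1}$ times that of $S_\mu$, and convert back; the cancellation of the constant and the rescaling $\bar\mu(x/m_1^{t-1})\sim m_1^{\alpha(t-1)}\bar\mu(x)$ match the paper's Corollary \ref{cor:first}. The paper phrases the dictionary via the $p$-th derivative $S_\mu^{(p)}$ (Theorem \ref{thm:p}) precisely to avoid the Taylor-polynomial bookkeeping you gesture at for $\alpha\geq2$. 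But you explicitly leave open the two points that constitute the actual work: the Tauberian converse, i.e.\ recovering $\bar\nu$ with its exact constant from the remainder of $S_\nu$ (the hard direction of Theorem \ref{thm:p}, proved there through Mellin-convolution Tauberian theorems, Corollary \ref{cor:RepS} and the Fa\'a di Bruno formula), and the integer case $\alpha=p+1$, where plain regular variation must be replaced by membership in $\Pi_L$ and your estimate ``$R^2$ has index $2(\alpha-1)>\alpha-1$'' no longer suffices as stated. As written, the proposal is a plausible outline of one part of the theorem resting on an unproved dictionary; the dictionary and the remaining three parts are the substance of the proof.
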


In order to illustrate the phase transition phenomenon let us examine the tail behavior of the free convolution powers for the family of Pareto distributions.

\begin{example}[Phase transition for Pareto distributions]\label{intro:example}
	Consider the family $\left(\mu_\alpha\right)_{\alpha>0}$ of Pareto distribution given by the density $\mu_\alpha(\dd x)=\alpha/x^{\alpha+1} \mathit{1}_{[1,+\infty)}(x)\dd x$. Then for $x>1$ we have $\mu_\alpha\big((x,+\infty)\big)=1/x^\alpha$, also for $\alpha>1$ we get $m_1(\mu_\alpha)=\alpha/(\alpha-1)$. Then for the tail of $\mu_\alpha^{\boxtimes t}$, with notations as in the theorem above, we get
	\begin{align*}
		\mu_\alpha^{\boxtimes t}\big((x,+\infty)\big)\sim
		\begin{cases}
			c_{t,\alpha} \,x^{-\tfrac{\alpha}{\alpha+t(1-\alpha)}}&\mbox{ for }\alpha\in(0,1), 
			\\
			t\log(x)^{t-1} x^{-1}&\mbox{ for }\alpha=1,
			\\
			t \left(\frac{\alpha}{\alpha-1}\right)^{\alpha (t-1)}x^{-\alpha}&\mbox{ for }\alpha>1.
		\end{cases} 
	\end{align*}
\end{example}

Our main tool which allows us to prove the theorem above is a detailed description of the  asymptotic properties of $S$-transforms, which allows us to completely characterize the behavior of the $S$-transform of measures with regularly varying tails. Depending on the index, we observe several different regimes. 
We present the results about the relation between the $S$-transform and the tail at $+\infty$ in Section \ref{sec:4}.

The tools developed in Section \ref{sec:4} allows us also to study $\boxtimes$ infinitely divisible probability measures. They are parameterized in terms of a real number $\gamma$ and a finite measure $\sigma$ on $[0,+\infty]$, thus we will write $\mu_\boxtimes^{\gamma,\sigma}$, for a $\boxtimes$ infinitely divisible measure parameterized by $\gamma$ and $\sigma$. We find a precise relation between the left (resp. right) tail of $\sigma$ and right (resp. left) tail of $\mu_\boxtimes^{\gamma,\sigma}$. The precise statement is more complicated than for the case of convolution powers, we refer to Section \ref{sec:6}. Let us just mention that again we obtain a behavior which resembles Example \ref{intro:example}. We observe a phase transition for the tail of $\mu_\boxtimes^{\gamma,\sigma}$ with critical case when $\sigma$ has regularly varying left tail with parameter $\alpha=1$. We refer to Example \ref{example:1.66} for an explicit example of a family of measures $\sigma$ for which we get a discontinuous change of tail asymptotics of corresponding measures $\mu_\boxtimes^{\gamma,\sigma}$.

This paper has 6 more sections. In Section \ref{sec:prelim} we introduce all necessary facts from the theory of regularly varying functions and some Tauberian theorems. In Section \ref{sec:3} we introduce some facts from so-called free harmonic analysis, we define all transforms that we need, present some of their basic properties and we make some useful new observations about the $S$-transform. In Section \ref{sec:4} we present our main technical results, that is, for $\mu\in\cM_+$ with regularly varying tail we characterize 
 the behavior of the $S$-transform of $\mu$ at $0^-$. In Section \ref{sec:5} we prove Theorem \ref{Introthm:1.4}. Section \ref{sec:6} is devoted to the study of tails $\boxtimes$-infinitely divisible measures. In Section \ref{sec:7} we gather some more consequences of the results of Section \ref{sec:4} which are not directly related with the phase transition phenomenon. In particular we discuss free counterpart of Breiman's lemma, we determine the left tail of $\mu$ in terms of its $S$-transform and discuss how our results extends to symmetric probability measures.

\section{Preliminaries}\label{sec:prelim}
In this section we discuss in detail the theory of regular variation, Tauberian theorems that we need and also some aspects of free harmonic analysis which we use in the subsequent sections. We intend to introduce the reader with all background needed to understand our results.
\subsection{Regular variation}

We say that two functions $f, g$ defined on a neighborhood of infinity are asymptotically equivalent if $f(x)\sim g(x)$.
Recall that $\mathcal{R}_\rho$ denotes the class of regularly varying functions with index $\rho\in\R$. Function $L\in\mathcal{R}_0$ is called slowly varying and we have $L(x)=o(x^\varepsilon)$ as $x\to+\infty$ for any $\varepsilon>0$.
\begin{thm}[\mbox{\cite[Section 1.7.7]{BGT89}}]\	\label{thm:conj}\ \\
If $f\in \mathcal{R}_\alpha$ with $\alpha\neq 0$, then there exists $g\in \mathcal{R}_{1/\alpha}$ such that
	\[
	f(g(x))\sim g(f(x))\sim x.
	\]
	Function $g$ is determined uniquely up to asymptotic equivalence. Moreover if $f(x)\sim x^{ab}L^{a}(x^b)$ for $L\in\mathcal{R}_0$ and $a,b>0$, then 
	\[
	g(x)\sim x^{1/(ab)}{L^\#}^{1/b}(x^{1/a}),
	\]
	where 
	$L^\#$ is a slowly varying function, unique up to asymptotic equivalence, with
	\[
	\lim_{x\to+\infty}L(x) L^\#(x L(x))= 1\qquad\mbox{and}\qquad 	\lim_{x\to+\infty}L^\#(x) L(x L^\#(x))=1.
	\]
	In particular, if
	$L(x) = \prod_{k=1}^n (\log^{(\circ k)}(x))^{\alpha_k}$, where $\log^{(\circ k)}$ is the $k$ fold composition of $\log$, then
	\[
	L^\#(x)\sim 1/L(x).
	\]
\end{thm}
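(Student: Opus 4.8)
The plan is to prove the three assertions in sequence, deriving the explicit formula and the special case from the general existence statement, and invoking throughout the standard toolkit for regularly varying functions from \cite{BGT89} (Karamata's representation, the uniform convergence theorem, and the smooth strictly monotone asymptotic equivalent). For the existence and uniqueness of the asymptotic inverse I would reduce to $\alpha>0$ (the case $\alpha<0$ is symmetric, working with decreasing functions or passing to $1/f$). Since $f\in\mathcal{R}_\alpha$ with $\alpha>0$ tends to $+\infty$, it admits a strictly increasing smooth $\tilde f\sim f$, still in $\mathcal{R}_\alpha$; set $g:=\tilde f^{-1}$. I would first show $g\in\mathcal{R}_{1/\alpha}$ by the standard inversion argument: writing $g(\lambda x)=\mu\,g(x)$ and applying $\tilde f$ gives $\tilde f(\mu g(x))/\tilde f(g(x))=\lambda$, and monotonicity together with $\tilde f\in\mathcal{R}_\alpha$ forces $\mu\to\lambda^{1/\alpha}$. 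The relations $f(g(x))\sim x$ and $g(f(x))\sim x$ then follow from $f\sim\tilde f$, $\tilde f(g(x))=x$, and the uniform convergence theorem for $g$ (replacing the argument of $g$ by an asymptotically equivalent one does not change the limit). For uniqueness I would compose: if $g_1$ is any asymptotic inverse with $g_1\to+\infty$, then $g_1(x)\sim g(f(g_1(x)))\sim g(x)$, where the first step uses $g(f(y))\sim y$ at $y=g_1(x)$ and the second uses $f(g_1(x))\sim x$ with uniform convergence of $g$. The same computation shows that asymptotically equivalent functions in $\mathcal{R}_\alpha$ have asymptotically equivalent inverses, which lets me replace $f$ by the exact expression $x^{ab}L^a(x^b)$ from now on.

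For the explicit formula the key observation is that the de Bruijn conjugate $L^\#$ is precisely the slowly varying part of the asymptotic inverse of $h(u):=uL(u)$. Indeed $h\in\mathcal{R}_1$, so by the first assertion it has an asymptotic inverse $h^\leftarrow\in\mathcal{R}_1$, which I write as $h^\leftarrow(v)=v\,L^\#(v)$ for a slowly varying $L^\#$; substituting this into $h(h^\leftarrow(v))\sim v$ and $h^\leftarrow(h(u))\sim u$ yields exactly the two defining limit relations, and uniqueness of $L^\#$ up to $\sim$ comes from the uniqueness already established. I would then decompose $x^{ab}L^a(x^b)=\bigl(h(x^b)\bigr)^a$, i.e. $f=\Phi\circ h\circ\phi$ with $\phi(x)=x^b$ and $\Phi(w)=w^a$ (both with exact inverses $\phi^\leftarrow(v)=v^{1/b}$, $\Phi^\leftarrow(y)=y^{1/a}$), and prove the chaining rule $f^\leftarrow\sim\phi^\leftarrow\circ h^\leftarrow\circ\Phi^\leftarrow$ by direct substitution, using uniform convergence to push the equivalence $h(h^\leftarrow(z))\sim z$ through the regularly varying outer map $\Phi$. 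Evaluating gives
\[
g(y)=f^\leftarrow(y)\sim\bigl(h^\leftarrow(y^{1/a})\bigr)^{1/b}=\bigl(y^{1/a}L^\#(y^{1/a})\bigr)^{1/b}=y^{1/(ab)}{L^\#}^{1/b}(y^{1/a}),
\]
as claimed.

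For the last assertion I would verify that $L^\#:=1/L$ satisfies the defining relations, which by the uniqueness just proved forces $L^\#(x)\sim 1/L(x)$. Both relations reduce to showing $L(x\,s(x))\sim L(x)$ for a slowly varying perturbation $s\in\{L,1/L\}$. I would establish this by induction on the depth of the iterated logarithm: any slowly varying $s$ satisfies $\log s(x)=o(\log x)$, hence $\log(x\,s(x))\sim\log x$, and then $\log^{(\circ k)}(x\,s(x))\sim\log^{(\circ k)}(x)$ for every $k$ by iterating; taking the finite product $\prod_k(\log^{(\circ k)})^{\alpha_k}$ gives $L(x\,s(x))\sim L(x)$. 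I expect the main obstacle to lie in the first assertion — making the inversion and the two ``$\sim$'' relations fully rigorous through uniform convergence and the monotone equivalent — since the second and third assertions become largely formal once $L^\#$ has been identified with the slowly varying part of $h^\leftarrow$.
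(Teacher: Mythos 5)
The paper does not prove this statement; it is imported verbatim from Bingham--Goldie--Teugels \cite[Theorems 1.5.12, 1.5.13 and Section 1.7.7 / Appendix 5]{BGT89} as a known fact, so there is no internal proof to compare against. Your argument is correct and is essentially the standard one from that reference: existence and uniqueness of the asymptotic inverse via a monotone equivalent plus the Uniform Convergence Theorem, identification of $L^\#$ as the slowly varying part of the asymptotic inverse of $u\mapsto uL(u)$, the chaining $f=\Phi\circ h\circ\phi$ to get the explicit formula, and verification of the defining relations for $L^\#=1/L$ by iterating $\log^{(\circ k)}(x\,s(x))\sim\log^{(\circ k)}(x)$. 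The only point I would flag is the case $\alpha<0$, which you dispatch with ``pass to $1/f$'': as literally written the relation $f(g(x))\sim x$ as $x\to+\infty$ is delicate there, since both $f$ and $g$ tend to $0$ and the compositions leave the neighbourhood of infinity where regular variation lives; this is really an imprecision inherited from the statement itself (the paper only ever uses the result for positive index or through the de Bruijn conjugate), but if you want a complete proof for $\alpha\neq0$ you should spell out in what sense the inverse is meant in that regime.
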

For a fixed function $L$, the function $L^\#$ goes under the name de Bruijn conjugate of $L$.

We will also use the ``second-order theory'' of regular variation due to de Haan. First we need to introduce a proper subclass of $\mathcal{R}_0$.
\begin{defin}
	If $L\in \mathcal{R}_0$, we write $f\in\Pi_L(c)$ with $c\in\R$ if 
	\[
	\lim_{x\to+\infty}\frac{f(\lambda x)-f(x)}{L(x)}=c \log(\lambda)
	\]
	for all $\lambda>0$.
\end{defin}

\begin{remark}\label{rem:PiInv}
If $f\in \Pi_L(c)$ with $c>0$, then $f(x)/L(x)\to+\infty$ as $x\to+\infty$, \cite[Proposition 1.5.9a]{BGT89}. Moreover, $1/f\in \Pi_{L/f^2}(-c)$, \cite[Proposition 1]{HR79}.
\end{remark}

Observe that $f(\lambda x)/f(x)=1+\left[(f(\lambda x)-f(x))/L(x)\right]\cdot \left[L(x)/f(x)\right]$, hence by the above remark and the definition of $\Pi_L(c)$,
for any $c>0$ and $L\in\mathcal{R}_0$, we have that $\Pi_L(c)\subset \mathcal{R}_0$.

\begin{thm}[\mbox{\cite[Theorem 3.6.6, Theorem 3.7.1]{BGT89}}] \label{thm:PiL}
	The following are equivalent: 
	\begin{enumerate}[(i)]
		\item $f\in\Pi_L(c)$,
		\item There exist constants $x_0\geq0$ and $C,d\in\R$ such that for $x\geq x_0$,
		\[
		f(x)=C+c\int_{x_0}^x(1+o(1))\frac{L(t)}{t} \dd t+d(1+o(1))L(x),
		\]
		where both functions $o(1)$ are measurable,
		\item For any $\sigma>0$,
		\[
		\sigma x^\sigma \int_x^\infty u^{-\sigma-1}f(u)\dd u-f(x)\sim \frac{c}{\sigma}L(x).
		\]
	\end{enumerate}
\end{thm}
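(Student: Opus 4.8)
The statement is the de Haan second-order characterization of the class $\Pi_L(c)$, so the plan is to prove the three-way equivalence through $(ii)\Rightarrow(i)$, $(i)\Rightarrow(ii)$ and $(i)\Leftrightarrow(iii)$, using throughout the facts already recorded that $\Pi_L(c)\subset\mathcal{R}_0$ and, for $c>0$, that $f(x)/L(x)\to+\infty$ (Remark \ref{rem:PiInv}). Replacing $f$ by $-f$, I may assume $c\geq0$.

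The implication $(ii)\Rightarrow(i)$ is a direct computation. From the representation I would write
\[
f(\lambda x)-f(x)=c\int_x^{\lambda x}(1+o(1))\frac{L(t)}{t}\,dt+d\left[(1+o(1))L(\lambda x)-(1+o(1))L(x)\right],
\]
substitute $t=xs$ in the integral, and divide by $L(x)$. Since $L\in\mathcal{R}_0$, the Uniform Convergence Theorem for slowly varying functions gives $L(xs)/L(x)\to1$ uniformly for $s\in[1,\lambda]$, so the first term tends to $c\int_1^\lambda s^{-1}\,ds=c\log\lambda$, while the bracketed term is $o(L(x))$ because $L(\lambda x)-L(x)=o(L(x))$. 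This is precisely $(i)$.

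The genuinely hard direction is $(i)\Rightarrow(ii)$, and its crux is a Uniform Convergence Theorem for $\Pi_L(c)$ itself: the limit $(f(\lambda x)-f(x))/L(x)\to c\log\lambda$ holds locally uniformly in $\lambda$ on $(0,+\infty)$. I would prove this by the standard measurability/Baire-category route (Egorov together with a Steinhaus-type argument), just as for the UCT of slowly varying functions; this is the step I expect to be the main obstacle. Granting uniformity, I would smooth $f$ by the logarithmic average $\phi(x)=\int_x^{ax}f(s)\,\frac{ds}{s}$ for a fixed $a>1$. On one hand $\phi'(x)=(f(ax)-f(x))/x\sim c\log a\cdot L(x)/x$, so integrating gives $\phi(x)=\mathrm{const}+c\log a\int_{x_0}^x(1+o(1))\frac{L(t)}{t}\,dt$; on the other hand $\phi(x)-(\log a)f(x)=\int_x^{ax}(f(s)-f(x))\frac{ds}{s}=O(L(x))$ by the $\Pi$-limit over the fixed range $[x,ax]$. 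Dividing the first expression by $\log a$ and comparing yields the representation $(ii)$, with the $O(L(x))$ remainder absorbed into the term $d(1+o(1))L(x)$.

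For the last equivalence I would first rewrite the functional in $(iii)$ using the identity $\sigma x^\sigma\int_x^\infty u^{-\sigma-1}\,du=1$ and the substitution $u=xv$:
\[
\sigma x^\sigma\int_x^\infty u^{-\sigma-1}f(u)\,du-f(x)=\sigma\int_1^\infty v^{-\sigma-1}\left(f(xv)-f(x)\right)\,dv.
\]
For $(i)\Rightarrow(iii)$ I would divide by $L(x)$ and pass to the limit: the integrand converges pointwise to $c\log v$, and a Potter-type bound for $\Pi_L(c)$ (of the form $(f(xv)-f(x))/L(x)\leq C_\delta(1+\log v)v^{\delta}$ for $v\geq1$ and $x$ large, with $\delta<\sigma$), which follows by inserting the bounds of $(ii)$, dominates it by a $v$-integrable function against the weight $v^{-\sigma-1}$. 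Dominated convergence then gives $c\sigma\int_1^\infty v^{-\sigma-1}\log v\,dv=c/\sigma$, which is $(iii)$. The converse $(iii)\Rightarrow(i)$ is a Mellin--Tauberian de-averaging that recovers the pointwise $\Pi$-behavior from the averaged asymptotics; I would carry it out following de Haan, exploiting monotonicity of the averaging operator in $\sigma$ together with an Abelian/Tauberian comparison. Together with the UCT, this de-averaging step is where the real work lies.
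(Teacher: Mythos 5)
The paper contains no proof of this statement: Theorem~\ref{thm:PiL} is quoted verbatim from Bingham--Goldie--Teugels (Theorems 3.6.6 and 3.7.1), so there is no internal argument to compare yours against. Judged on its own terms, your outline is the standard textbook route and is essentially sound. The computation for $(ii)\Rightarrow(i)$ via the Uniform Convergence Theorem for $L$ is correct, and your $(i)\Rightarrow(ii)$ --- local uniformity of the $\Pi_L$-limit followed by the logarithmic smoothing $\phi(x)=\int_x^{ax}f(s)\,\dd s/s$ --- is exactly the de Haan argument; you should just note that the local integrability of $f$ near infinity needed to form $\phi$ is itself a consequence of the uniformity step, and that the remainder you produce, $-\bigl(\tfrac{c\log a}{2}+o(1)\bigr)L(x)$, has the advertised form $d(1+o(1))L(x)$ only when $c\neq0$. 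The two places where you defer to the literature (the Baire-category proof of uniformity, and the converse of $(iii)$) make this a plan rather than a proof, but it is the right plan. One substantive remark: $(iii)\Rightarrow(i)$ is considerably easier than the ``Mellin--Tauberian de-averaging'' you anticipate. Put $g(x)=\sigma x^{\sigma}\int_x^{\infty}u^{-\sigma-1}f(u)\,\dd u$; then
\[
g'(x)=\frac{\sigma}{x}\bigl(g(x)-f(x)\bigr)\sim \frac{c}{x}\,L(x)
\]
by $(iii)$, so $g(\lambda x)-g(x)=c\int_x^{\lambda x}(1+o(1))L(t)t^{-1}\,\dd t\sim c\log(\lambda)L(x)$, i.e.\ $g\in\Pi_L(c)$; since $f=g-\tfrac{c}{\sigma}(1+o(1))L$ and $L(\lambda x)-L(x)=o(L(x))$, it follows at once that $f\in\Pi_L(c)$, with no Tauberian machinery at all. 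Your Abelian direction $(i)\Rightarrow(iii)$ by dominated convergence against a Potter-type bound is fine as stated.
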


Assume that $L\in \mathcal{R}_0$ and $f_1\in\Pi_L(c)$ for some $c\neq0$. We say that $f_1$ and $f_2$ are $\Pi_L$-equivalent (denoted $f_1(x)\sim_{\Pi_L} f_2(x)$) if the limit
\[
\lim_{x\to+\infty}\frac{f_1(x)-f_2(x)}{L(x)}
\]
exists and is finite.
Clearly, $\sim_{\Pi_L}$ is an equivalence relation on $\cup_{c}\Pi_L(c)$. 

As follows from Theorem \ref{thm:conj}, de Bruijn conjugate pairs $(L, L^\#)$  appear naturally when we look for asymptotic inverses of regularly varying functions. However, when dealing with slowly varying functions in the subclass $\Pi_L$, a stronger notion of conjugacy is required.

Henceforth we will use notation $f^{\langle -1 \rangle}$ for the compositional inverse of $f$.
\begin{defin}\label{def:conj}
Assume that function $f\in \Pi_L(c)$ is continuous and such that $x\mapsto xf(x)$ is increasing. Without loss of generality we may assume that $c\in\{-1,1\}$. Define a function $v$ on $(0,+\infty)$ by
\[
v(x)=x f(x),
\]
which clearly is invertible. Writing its inverse as, 
\[
v^{\langle-1\rangle}(x)=x f^\ast(x),
\]
the function $f^\ast\colon (0,+\infty)\to(0,+\infty)$ is called the $\Pi$-conjugate function for $f\in \Pi_L(\pm1)$. Equivalently, $f^\ast$ can be defined through the condition, \cite[Page 1032]{HR79},
\[
\lim_{x\to+\infty}\frac{f(x)}{L(x)}\big(f(x) f^\ast(x f(x))-1\big)=0.
\]
\end{defin}

Observe that since $f(x)/L(x)\to+\infty$ as $x\to+\infty$, function $f^\ast$ is also de Bruijn conjugate for $f$.
\begin{thm}[\mbox{\cite[Theorem 1]{HR79}}]\label{thm:PiConj}\ 
	\begin{enumerate}[(i)]
		\item $f\in\Pi_L(\pm1)$ if and only if
		\[
		f^\ast\in \Pi_M(\mp1),
		\]
		where $M(x):=L(x f^\ast(x))f^\ast(x)^2\in \mathcal{R}_0$.
		\item $f^\ast$ is unique up to the equivalence class $\sim_{\Pi_M}$  on $\Pi_M$.
		\item $f^{\ast\ast}(x)\sim_{\Pi_L} f(x)$.
	\end{enumerate}
\end{thm}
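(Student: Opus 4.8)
The plan is to reduce the whole statement to the genuine compositional inverse of the regularly varying function $v(x)=xf(x)$ and then transfer the second-order ($\Pi$-)behaviour of $f$ to $f^\ast$ by a direct change of variables. First I would record that, since $f\in\Pi_L(\pm1)\subset\mathcal{R}_0$, the function $v(x)=xf(x)$ lies in $\mathcal{R}_1$; being continuous with $x\mapsto xf(x)$ increasing, it is a genuine bijection, and by Theorem \ref{thm:conj} its inverse $g:=v^{\langle-1\rangle}$ again belongs to $\mathcal{R}_1$. Writing $g(x)=xf^\ast(x)$ as in Definition \ref{def:conj} identifies $f^\ast\in\mathcal{R}_0$, and the identity $v(g(x))=x$ becomes the exact conjugacy relation
\[
f^\ast(x)\, f\big(x f^\ast(x)\big) = 1 .
\]

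For part (i) the core computation runs as follows. Fix $\lambda>0$, set $a=g(x)$ and $b=g(\lambda x)$, and put $\rho=b/a=g(\lambda x)/g(x)$, so that $\rho\to\lambda$ because $g\in\mathcal{R}_1$. Applying $v$ and using $v(b)=\lambda v(a)$ I would write
\[
\lambda = \frac{v(\rho a)}{v(a)} = \rho\,\frac{f(\rho a)}{f(a)} = \rho\left(1 + \frac{f(\rho a) - f(a)}{f(a)}\right),
\]
and then insert the defining relation $f(\rho a)-f(a)\sim c\,L(a)\log\rho$. Solving for $\rho-\lambda$ and substituting $a=xf^\ast(x)$ together with $f(a)=1/f^\ast(x)$ from the conjugacy relation above collapses everything to
\[
\frac{f^\ast(\lambda x) - f^\ast(x)}{L(x f^\ast(x))\, f^\ast(x)^2} \longrightarrow -c\log\lambda ,
\]
which is exactly $f^\ast\in\Pi_M(\mp1)$ with $M(x)=L(xf^\ast(x))f^\ast(x)^2$. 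A short separate check, using that the composition of a slowly varying function with an $\mathcal{R}_1$ function is slowly varying, confirms $M\in\mathcal{R}_0$.

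For parts (ii) and (iii) I would exploit that, in the continuous invertible setting of Definition \ref{def:conj}, the map $f\mapsto f^\ast$ is a genuine involution: $g=v^{\langle-1\rangle}$ is itself continuous and increasing with $g(x)=xf^\ast(x)$, and $g^{\langle-1\rangle}=v$, so $f^{\ast\ast}(x)=v(x)/x=f(x)$ identically; the statement $f^{\ast\ast}\sim_{\Pi_L}f$ in (iii) is then the weaker version needed when $f^\ast$ is only specified up to the asymptotic relation of Definition \ref{def:conj}. The reverse implication in the ``if and only if'' of (i) follows by applying the forward computation to $f^\ast\in\Pi_M(\mp1)$: it produces $f=f^{\ast\ast}\in\Pi_{M'}(\pm1)$ with auxiliary function $M'(x)=M(xf(x))f(x)^2$, and a direct substitution (using $f^\ast(v(x))=1/f(x)$ and $v(x)f^\ast(v(x))=x$) shows $M'=L$ exactly, so $f\in\Pi_L(\pm1)$ as required. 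Uniqueness in (ii) I would derive from the rigidity of the asymptotic conjugacy relation: two candidates for $f^\ast$ both satisfy the limit in Definition \ref{def:conj}, and subtracting these relations and dividing by $M$ shows their difference is $\sim_{\Pi_M}$-equivalent.

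The main obstacle is the uniformity needed in the core computation: the quantity $\rho=\rho(x)$ converges to $\lambda$ but is never equal to it, so replacing $\log\rho$ by $\log\lambda$ and using $f(\rho a)-f(a)\sim c L(a)\log\rho$ requires the convergence in the definition of $\Pi_L(c)$ to hold locally uniformly in the multiplicative argument near $\lambda$. I would invoke the uniform convergence theorem for de Haan's class $\Pi$ (available from the theory underlying Theorem \ref{thm:PiL}) to justify this, then carefully track the $(1+o(1))$ factors so that they do not interfere with the limit, also verifying that $L(a)/f(a)\to0$ (equivalently $f/L\to+\infty$, Remark \ref{rem:PiInv}) so that the first-order term $\rho\to\lambda$ dominates the correction and the stated constant $-c$ is recovered.
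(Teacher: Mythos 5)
The paper does not actually prove this statement: it is imported verbatim from the reference \cite[Theorem 1]{HR79} and used as a black box, so there is no internal proof to compare yours against. Judged on its own, your argument is essentially the classical de Haan--Resnick proof and is correct. The reduction to the genuine inverse of $v(x)=xf(x)$, the exact conjugacy $f^\ast(x)\,f(xf^\ast(x))=1$, the expansion of $\lambda=\rho\,f(\rho a)/f(a)$ with $\rho=g(\lambda x)/g(x)\to\lambda$, and the substitution $L(a)/f(a)=L(xf^\ast(x))f^\ast(x)$ do collapse to $\big(f^\ast(\lambda x)-f^\ast(x)\big)/M(x)\to -c\log\lambda$ with the stated $M$, giving the sign flip $\pm1\mapsto\mp1$; the converse and part $(iii)$ via the involutive identity $f^{\ast\ast}=f$ together with the verification $M'(x)=M(xf(x))f(x)^2=L(x)$ are likewise sound, as is your derivation of $(ii)$ from the second characterization in Definition \ref{def:conj}. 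You also correctly isolate the one genuine analytic issue --- that $\rho$ depends on $x$, so the defining limit of $\Pi_L(c)$ is applied at a moving multiplicative argument --- and the uniform convergence theorem for the de Haan class (local uniformity in $\lambda$ on compact subsets of $(0,+\infty)$) is exactly what closes it. Two small points to tidy: for the $\Pi_L(-1)$ case the appeal to Remark \ref{rem:PiInv} should go through $-f\in\Pi_L(1)$, yielding $|f(x)|/L(x)\to+\infty$ and hence still $L(a)/f(a)\to 0$, which is all you use; and in the uniqueness argument it is worth stating explicitly that the substitution $y=v(x)$ turns the normalizing factor $f(x)^2/L(x)$ into $1/M(y)$ via the identity $L(g(y))f^\ast(y)^2=M(y)$, which you already have from the conjugacy relation.
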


We say that a measurable function $f$ is rapidly varying (denoted $f\in \mathcal{R}_\infty$) if 
\begin{align*}
\lim_{x\to+\infty}\frac{f(\lambda x)}{f(x)}=+\infty
\end{align*}
for all $\lambda>1$. We are interested in a proper subclass $K\mathcal{R}_\infty$ of $\mathcal{R}_\infty$, which is classically defined as a family of measurable functions for which Karamata indices are $+\infty$  \cite[Section 2.4]{BGT89}. Equivalently, one can define it as follows (see \cite[Proposition 2.4.3]{BGT89}).
\begin{defin}\label{def:KR}
We write $f\in K\mathcal{R}_\infty$ if and only if $f$ is measurable and for every $d\in\R$,
\[
\liminf_{x\to+\infty} \inf_{\lambda\geq1} \frac{f(\lambda x)}{\lambda^d f(x)}\geq1.
\]	
\end{defin}

Below we list  important properties of $K\mathcal{R}_\infty$. 
\begin{thm}\label{thm:KR}\ 
	\begin{enumerate}[(i)]
		\item \cite[Theorem 2.4.7]{BGT89} Assume that $f$ is continuous and increasing to $+\infty$. Then,
\[
f\in \mathcal{R}_0\qquad\mbox{if and only if}\qquad f^{\langle-1 \rangle}\in K\mathcal{R}_\infty.
\]
		\item \cite[Proposition 2.4.4]{BGT89} If $f\in \mathcal{R}_\infty$ is nondecreasing, then $f\in K\mathcal{R}_\infty$.
	   \item \cite[Theorem 2.4.5]{BGT89}  $f\in K\mathcal{R}_\infty$ if and only if
	   \[
	   f(x)=\exp\left(z(x)+\eta(x)+\int_1^x \xi(t)\frac{\dd t}{t}\right),\qquad x\geq1,
	   \]
	   where functions $z$, $\eta$ and $\xi$ are measurable and such that $z$ is nondecreasing, $\eta(x)\to0$ and $\xi(x)\to+\infty$ as $x\to+\infty$.
	\end{enumerate}
\end{thm}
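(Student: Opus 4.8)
The three parts of Theorem \ref{thm:KR} are classical facts from the Karamata--de Haan theory of Matuszewska indices, and in a self-contained treatment I would establish them in the order (iii), then (ii), then (i), since the integral representation of (iii) is the engine behind the other two. The natural move throughout is to pass to the exponential scale: writing $\phi(u)=\log f(e^u)$, the defining inequality of Definition \ref{def:KR} becomes the assertion that for every $d\in\R$ one has $\phi(u+s)-\phi(u)\geq ds-o(1)$ uniformly in $s\geq 0$ as $u\to+\infty$, i.e.\ the forward increments of $\phi$ eventually dominate every linear function.

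For (iii), the converse direction is a routine estimate: given the representation, $\log\!\big(f(\lambda x)/(\lambda^d f(x))\big)=z(\lambda x)-z(x)+\eta(\lambda x)-\eta(x)+\int_x^{\lambda x}(\xi(t)-d)\tfrac{\dd t}{t}$, and since $\xi(t)\geq d$ for all large $t$, while $z$ is nondecreasing and $\eta(\lambda x),\eta(x)\to 0$ uniformly for $\lambda\geq 1$, the $\liminf$ of the exponential is at least $1$. The forward direction is where the real work lies, and I expect it to be the \emph{main obstacle}. The plan is to build the representation from $\phi$ by a smoothing argument: average $\phi$ over a unit forward window to obtain a function whose almost-everywhere derivative $\xi$ is forced to tend to $+\infty$, collect the monotone part of $\phi$ into $z$, and absorb the smoothing error into $\eta$. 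The delicate points are showing that the smoothed derivative genuinely tends to $+\infty$ rather than merely being unbounded, and that the residual $\eta$ can be taken to vanish; both rely on the \emph{uniformity} over $\lambda\geq 1$ in Definition \ref{def:KR}, which is strictly stronger than pointwise rapid variation and is exactly what rules out oscillatory counterexamples.

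For (ii), assume $f$ is nondecreasing and rapidly varying. Here monotonicity upgrades the pointwise limit $f(\lambda x)/f(x)\to+\infty$ (for each fixed $\lambda>1$) to the uniform lower bound required by Definition \ref{def:KR}: fixing $d$ and a ratio $\rho>1$, choose $k$ with $\rho^k>$ any prescribed threshold, use rapid variation to bound $f(\rho^j x)/f(\rho^{j-1}x)$ from below on a geometric grid for all large $x$, and interpolate by monotonicity to conclude $f(\lambda x)\geq\lambda^d f(x)$ for all $\lambda\geq 1$ once $x$ is large. Equivalently, one reads off a representation as in (iii) with $\xi\to+\infty$ directly from monotone rapid variation.

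Finally, (i) is a duality between $\mathcal{R}_0$ and $K\mathcal{R}_\infty$ under compositional inversion, which I would deduce from (ii). If $f\in\mathcal{R}_0$ is continuous and increasing to $+\infty$, set $g=f^{\langle -1\rangle}$; for fixed $\lambda>1$, writing $x=g(y)$ and $x'=g(\lambda y)$, the identity $f(x')/f(x)=\lambda$ together with $f(\rho x)/f(x)\to 1$ for every fixed $\rho$ forces $x'/x\to+\infty$, so $g\in\mathcal{R}_\infty$; as $g$ is nondecreasing, (ii) gives $g\in K\mathcal{R}_\infty$. Conversely, if $g=f^{\langle -1\rangle}\in K\mathcal{R}_\infty\subset\mathcal{R}_\infty$, then the same inverse-function bookkeeping---now using that $g$ grows faster than any power---shows $f(\lambda x)/f(x)\to 1$ for each fixed $\lambda$, so $f\in\mathcal{R}_0$. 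The only care needed is that continuity and strict monotonicity of $f$ make $g$ a genuine inverse, which is exactly what the hypotheses supply.
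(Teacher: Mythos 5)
The paper does not prove this theorem: all three parts are quoted verbatim from Bingham--Goldie--Teugels (\cite[Theorems 2.4.5, 2.4.7, Proposition 2.4.4]{BGT89}), so there is no in-paper argument to compare yours against. Judged on its own terms, your treatment of (i) and (ii) is correct and essentially the standard one. For (ii), the geometric-grid argument is right, but your intermediate claim that one can ``conclude $f(\lambda x)\geq\lambda^d f(x)$ for all $\lambda\geq 1$ once $x$ is large'' is slightly too strong for $d>0$: a nondecreasing rapidly varying $f$ can be locally flat on short windows $[x,\rho x)$ with $\rho$ close to $1$, where the ratio only satisfies $f(\lambda x)/(\lambda^d f(x))\geq \rho^{-d}$. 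This is harmless because Definition \ref{def:KR} asks only for a $\liminf$ bound, which you recover by letting $\rho\to 1^+$ after taking the $\liminf$ in $x$; but the pointwise inequality as stated is false. Your deduction of (i) from (ii) via the inverse-function bookkeeping is clean and complete, and you correctly note that only $g\in\mathcal{R}_\infty$ (not the full $K\mathcal{R}_\infty$ hypothesis) is needed for the converse direction.

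The genuine gap is the forward implication of (iii), which you yourself flag as the main obstacle but then leave as a plan rather than a proof. Writing $\phi(u)=\log f(e^u)$ and ``averaging over a unit forward window'' is the right opening move, but the sketch never actually produces the triple $(z,\eta,\xi)$: you do not show that the smoothed derivative $\xi$ tends to $+\infty$ (as opposed to merely having unbounded averaged increments along a subsequence), you do not explain how the nondecreasing component $z$ is extracted from the possibly very irregular $\phi$, and you do not address the preliminary measurability/local-integrability issues needed to make the integral $\int_1^x\xi(t)\,\dd t/t$ meaningful when $f$ is only assumed measurable. This is precisely the content of \cite[Theorem 2.4.5]{BGT89}, whose proof runs through the representation theory for the Matuszewska-index classes and is not a routine smoothing computation. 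Since the paper uses (iii) only as a black box (e.g.\ nowhere is the explicit representation manipulated beyond existence), citing the reference, as the authors do, is the appropriate resolution; if you want a self-contained argument you would need to carry out the de Haan--Karamata representation construction in full.
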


We will need also the following easy result.
\begin{remark}\label{rem:KR}
	Let $f$ be measurable and increasing. Then,
	\[
	f(x)\in K\mathcal{R}_\infty\qquad\mbox{if and only if}\qquad (x-1)f(x)\in K\mathcal{R}_\infty.
	\]
	Indeed, it follows from Definition \ref{def:KR} and the fact that for $x\geq2$ and $\lambda\geq1$  we have
	\[
\frac{f(\lambda x)}{\lambda^d f(x)}\leq 	\frac{f(\lambda x)(\lambda x-1)}{\lambda^d f(x)(x-1)}\leq \frac{f(\lambda x)}{\lambda^{d-2} f(x)}.
	\]
\end{remark}

\subsection{Transforms}

Let $\mathcal{M}_+$ denote the set of Borel probability measures on $\R_+=[0,+\infty)$.
For $\mu\in\mathcal{M}_+$ and $p\in\R$ denote
\[
m_p(\mu):=\int_{[0,+\infty)} t^p\mu(\dd t).
\]
\begin{defin}
	For $p\in\mathbb{N}\cup\{0\}$ denote by $\mathcal{M}_p$ the set on measures which have finite moments only up to $p$th one, i.e. 
	\[
	\mathcal{M}_p=\{ \mu\in\mathcal{M}_+\colon m_p(\mu)<+\infty\mbox{ and }m_{p+1}(\mu)=+\infty  \}.
	\]
\end{defin}

In what follows for $\mu\in\mathcal{M}_+$ we will denote its tail by
\[
\bar{\mu}(t) := \mu\big((t,+\infty)\big),\qquad t\geq 0.
\]
Let us make some simple observation about regularly varying tails following from the fact that $m_p(\mu)=p\int_0^{+\infty} t^{p-1}\bar{\mu}(t)\,\dd t$, $p\in\mathbb{N}$.
\begin{remark}\label{rem:ap}
	Assume that $\mu\in\mathcal{M}_+$ is such that $t\mapsto\bar{\mu}(t)$ is regularly varying with index $-\alpha\leq0$ and slowly varying function $L$. 
	
	Then, $\mu$ belongs to $\mathcal{M}_p$, $p\in\mathbb{N}$, if and only if one of the following conditions is satisfied
	\begin{enumerate}[(a)]
		\item $\alpha\in(p,p+1)$,
		\item $\alpha=p$ and $\int_1^{+\infty} L(t)/t\,\dd t<+\infty$,
		\item $\alpha=p+1$ and $\int_1^{+\infty} L(t)/t\,\dd t=+\infty$.
	\end{enumerate}
Probability measure $\mu$ belongs to $\mathcal{M}_0$ if and only if one of the following conditions is satisfied 
	\begin{enumerate}[(a$^\prime$)]
	\item $\alpha\in[0,1)$,
	\item $\alpha=1$ and $\int_1^{+\infty} L(t)/t\,\dd t=+\infty$.
\end{enumerate}
\end{remark}

We conclude this section with a result similar to \cite[Lemma 1]{BG06}. We will refer to the type of expansion appearing in \eqref{eq:oneS} as ``one sided Taylor expansion''.
\begin{lemma}\label{lem:expansion}
	Consider a function $f\colon (-\delta,0)\to \mathbb{R}$ and assume that $f$ is $C^\infty(-\delta,0)$ for some $\delta>0$. Fix an integer $p$ and suppose that there exists $\varepsilon\in[0,1)$ such that 
	\begin{align}\label{eq:succ}
	f^{(p)}\left(z\right)= o(z^{-\varepsilon}),\qquad z\to 0^-.
	\end{align}
	Then, there exists a real sequence $(f_k)_{k=0}^{p-1}$ such that, for each $n\in\{0,\ldots,p-1\}$ we have
	\begin{align}\label{eq:oneS}
	f^{(n)}(z)=\sum_{k=0}^{p-1-n}  \frac{f_{k+n}}{k!}z^k + o(z^{p-n-\varepsilon}),\qquad z\to 0^-.
	\end{align}
\end{lemma}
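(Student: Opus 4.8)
The plan is to establish the whole family of expansions \eqref{eq:oneS} by a downward induction on $n$, running from $n=p-1$ down to $n=0$, where at each stage I recover the expansion for $f^{(n)}$ by integrating the expansion already proved for $f^{(n+1)}$. The only analytic ingredient needed, used repeatedly, is the following elementary estimate: if $h$ is continuous on $(-\delta,0)$ and $h(s)=o(|s|^\gamma)$ as $s\to 0^-$ with $\gamma>-1$, then $|s|^\gamma$ is integrable near $0$, the improper integral $\int_z^0 h(s)\,\dd s$ converges, and $\int_z^0 h(s)\,\dd s=o(|z|^{\gamma+1})$ as $z\to 0^-$. This follows by writing $|h(s)|\le \phi(s)|s|^\gamma$ with $\phi(s)\to 0$ and noting $\int_z^0|s|^\gamma\,\dd s=|z|^{\gamma+1}/(\gamma+1)$. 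Here and below I read $z^\beta$ for non-integer $\beta$ as $|z|^\beta$, which is the only sensible reading as $z\to 0^-$.

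For the base case $n=p-1$ the hypothesis \eqref{eq:succ} reads $f^{(p)}(z)=o(z^{-\varepsilon})$, and since $\varepsilon\in[0,1)$ the exponent $-\varepsilon$ exceeds $-1$. Integrability of $f^{(p)}$ near $0$ makes the increments $\int_{z_1}^{z_2} f^{(p)}(s)\,\dd s$ vanish as $z_1,z_2\to 0^-$, so $f_{p-1}:=\lim_{z\to 0^-}f^{(p-1)}(z)$ exists; then the estimate gives
\[
f^{(p-1)}(z)-f_{p-1}=-\int_z^0 f^{(p)}(s)\,\dd s=o(z^{1-\varepsilon}),
\]
which is exactly \eqref{eq:oneS} for $n=p-1$ (the sum reducing to its $k=0$ term).

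For the inductive step, assume the expansion holds at level $n+1\le p-1$ with constants $f_{n+1},\dots,f_{p-1}$ already defined, and set
\[
g(z):=f^{(n)}(z)-\sum_{j=1}^{p-1-n}\frac{f_{j+n}}{j!}z^j.
\]
Differentiating and reindexing by $k=j-1$ gives $g'(z)=f^{(n+1)}(z)-\sum_{k=0}^{p-2-n}\frac{f_{k+n+1}}{k!}z^k$, which by the inductive hypothesis is precisely the remainder $o(z^{p-n-1-\varepsilon})$. Its exponent exceeds $-1$ because $n\le p-1$ and $\varepsilon<1$, so the integration estimate applies: the convergence of $\int_z^0 g'$ forces $g$ to a finite limit $f_n:=\lim_{z\to 0^-}g(z)$, and $g(z)-f_n=-\int_z^0 g'(s)\,\dd s=o(z^{p-n-\varepsilon})$. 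Rearranging is \eqref{eq:oneS} at level $n$, the indices $f_{j+n}$ ranging over the available constants $f_n,\dots,f_{p-1}$.

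The one point that requires genuine care, and the step I would flag as the main obstacle, is the bookkeeping of exponents ensuring integrability uniformly across the induction: the remainder integrated at level $n$ carries exponent $p-n-1-\varepsilon$, and the inequality $p-n-1-\varepsilon>-1$ must hold for every $n\in\{0,\dots,p-1\}$. This is exactly where $\varepsilon<1$ is indispensable—at the top level it degenerates to $-\varepsilon>-1$—and it is what simultaneously guarantees convergence of the improper integrals and the consistency of the reindexing $j=k+1$ when differentiating the polynomial part of $g$. Everything else reduces to the routine verification that a dominated tail integral of $o(|s|^\gamma)$ is $o(|z|^{\gamma+1})$.
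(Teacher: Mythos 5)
Your proof is correct and is precisely the argument the paper has in mind: the paper only remarks that the lemma ``is proved by the successive integration of \eqref{eq:succ} over the interval $(z,0)$,'' and your downward induction, with the elementary estimate that $\int_z^0 o(|s|^\gamma)\,\dd s = o(|z|^{\gamma+1})$ for $\gamma>-1$, is exactly that successive integration carried out carefully. Your identification of the constants $f_n$ as the one-sided limits $f^{(n)}(0^-)$ also matches the paper's remark.
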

The above result is proved by the successive integration of \eqref{eq:succ} over interval $(z,0)$, $z<0$. Then, we clearly have $f_n=f^{(n)}(0^-)$.

\subsection{Tauberian theorems}
This section is devoted to Tauberian theorems. We state some known result that we need in the subsequent sections, we present also proofs of results which are not available in research literature, thus some results of this subsection are new. 

\begin{thm}[\mbox{\cite[Theorem 1.6.4]{BGT89}}] \label{thm:TaubMom}
	Assume that $\mu\in\mathcal{M}_+$. For any number $\alpha$ and $n$ such that $0<\alpha<n$, the following assertions are equivalent:
	\begin{align*}
		\bar{\mu}(x)&\sim x^{-\alpha}L(x), \\
		\int_{[0,x]} t^n \mu(\dd t)&\sim \frac{\alpha}{n-\alpha} x^{n-\alpha} L(x).
	\end{align*}
\end{thm}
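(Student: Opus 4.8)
The statement to prove is Theorem~\ref{thm:TaubMom}, which asserts the equivalence of $\bar{\mu}(x)\sim x^{-\alpha}L(x)$ and $\int_{[0,x]}t^n\,\mu(\dd t)\sim \frac{\alpha}{n-\alpha}x^{n-\alpha}L(x)$ for $0<\alpha<n$. This is cited directly from \cite[Theorem 1.6.4]{BGT89}, so strictly speaking no proof is required; nevertheless I will sketch the natural argument, which is a standard Karamata-type computation via integration by parts.

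\textbf{Setup.} The plan is to relate the truncated moment $M_n(x):=\int_{[0,x]}t^n\,\mu(\dd t)$ to the tail $\bar{\mu}$ through integration by parts, and then apply Karamata's theorem on integrals of regularly varying functions. First I would write, for the Riemann--Stieltjes integral against $\dd\mu = -\dd\bar{\mu}$,
\begin{align}\label{eq:taub-ibp}
	M_n(x)=\int_{[0,x]}t^n\,\mu(\dd t)=-\int_{0}^{x}t^n\,\dd\bar{\mu}(t)=-x^n\bar{\mu}(x)+n\int_{0}^{x}t^{n-1}\bar{\mu}(t)\,\dd t,
\end{align}
where the boundary term at $0$ vanishes since $n>0$ and $\bar{\mu}$ is bounded.

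\textbf{Forward direction.} Assuming $\bar{\mu}(x)\sim x^{-\alpha}L(x)$, I observe that $t\mapsto t^{n-1}\bar{\mu}(t)\in\mathcal{R}_{n-1-\alpha}$ with index $n-1-\alpha>-1$ (because $\alpha<n$), so Karamata's theorem gives
\[
\int_{0}^{x}t^{n-1}\bar{\mu}(t)\,\dd t\sim \frac{1}{n-\alpha}\,x^{n}\bar{\mu}(x)\sim\frac{1}{n-\alpha}\,x^{n-\alpha}L(x).
\]
Substituting into \eqref{eq:taub-ibp} yields $M_n(x)\sim\left(\frac{n}{n-\alpha}-1\right)x^{n-\alpha}L(x)=\frac{\alpha}{n-\alpha}x^{n-\alpha}L(x)$, as claimed. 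Here the hypothesis $0<\alpha$ guarantees the coefficient is positive and nonzero, and $\alpha<n$ is exactly what makes the integrated term dominate the boundary term.

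\textbf{Converse direction.} This is the genuinely Tauberian half and the main obstacle, since from the asymptotics of the averaged quantity $M_n$ one must recover pointwise asymptotics of $\bar{\mu}$ — a deconvolution that can fail without a regularity (Tauberian) side condition. The monotonicity of $\bar{\mu}$ supplies that condition. Assuming $M_n(x)\sim\frac{\alpha}{n-\alpha}x^{n-\alpha}L(x)\in\mathcal{R}_{n-\alpha}$, I would use \eqref{eq:taub-ibp} to write $n\int_0^x t^{n-1}\bar{\mu}(t)\,\dd t = M_n(x)+x^n\bar{\mu}(x)$ and then exploit that $\bar\mu$ is nonincreasing to sandwich $\bar{\mu}(x)$ between difference quotients of the integral $\int_0^x t^{n-1}\bar{\mu}(t)\,\dd t$ over intervals $[x,\lambda x]$. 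Concretely, for $\lambda>1$ the monotonicity bound $\bar\mu(\lambda x)\le \bar\mu(t)\le\bar\mu(x)$ on $[x,\lambda x]$ converts an increment of the (regularly varying) integral into upper and lower bounds on $\bar\mu(x)$; since $M_n\in\mathcal{R}_{n-\alpha}$ its increments are controlled by the uniform convergence theorem for regularly varying functions, and letting $\lambda\downarrow1$ pins down $\bar{\mu}(x)\sim x^{-\alpha}L(x)$. I expect the delicate point to be verifying that the increment estimate is uniform enough to close the sandwich; this is precisely the content of the monotone density / Karamata Tauberian theorem, which is the machinery invoked in \cite{BGT89}, so in the paper it suffices to cite that reference rather than reproduce the argument.
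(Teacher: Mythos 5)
The paper offers no proof of Theorem \ref{thm:TaubMom}: it is quoted verbatim from \cite[Theorem 1.6.4]{BGT89}, so your remark that a citation suffices matches what the authors actually do. Your integration-by-parts identity and the forward (Abelian) direction are correct: Karamata's theorem applies to $t\mapsto t^{n-1}\bar{\mu}(t)\in\mathcal{R}_{n-1-\alpha}$ because $n-1-\alpha>-1$, and subtracting the two asymptotically equivalent terms is legitimate because the limiting coefficients differ (here $\alpha>0$ is used).

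The converse as you sketch it has a circularity. You propose to sandwich $\bar{\mu}(x)$ between difference quotients of $I(x):=\int_0^x t^{n-1}\bar{\mu}(t)\,\dd t$ over $[x,\lambda x]$; the monotonicity bounds on that increment are fine, but to exploit them you need the asymptotics of $I(\lambda x)-I(x)$, and your identity only gives $nI(x)=M_n(x)+x^n\bar{\mu}(x)$, whose second term is exactly the unknown quantity. So regular variation of $M_n$ does not by itself control the increments of $I$. Two standard repairs: (a) sandwich the increments of $M_n$ directly, $x^n\bigl(\bar{\mu}(x)-\bar{\mu}(\lambda x)\bigr)\le M_n(\lambda x)-M_n(x)\le(\lambda x)^n\bigl(\bar{\mu}(x)-\bar{\mu}(\lambda x)\bigr)$, and then telescope over the scales $\lambda^k x$, using Potter-type bounds for uniformity; the resulting geometric series converges precisely because $\alpha>0$. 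Or (b), the cleanest route and essentially the one in \cite{BGT89}: write $\bar{\mu}(x)=\int_{(x,+\infty)}t^{-n}\,\dd M_n(t)$, integrate by parts, and apply the direct half of Karamata's theorem once more; this makes the converse Abelian as well, with no Tauberian condition needed beyond the monotonicity already encoded in $M_n$.
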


\begin{thm}[\mbox{\cite[Theorem 1.7.4]{BGT89}}] \label{thm:TaubSt}
	Assume that $U$ is nondecreasing, $U(0^-)=0$ and $\rho>0$. Then, if $c$ is a positive constant, $0\leq\sigma<\rho$ and $L\in \mathcal{R}_0$, the following assertions are equivalent:
	\begin{align*}
		U(x)&\sim c\,x^{\sigma}L(x), \\
		\int_{[0,+\infty)} \frac{\dd U(t)}{(t+x)^\rho}&\sim c\, \frac{\Gamma(\rho-\sigma)\Gamma(\sigma+1)}{\Gamma(\rho)}x^{\sigma-\rho}L(x).
	\end{align*}
\end{thm}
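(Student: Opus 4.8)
The plan is to prove the two implications separately, viewing
\[
\widehat U(x):=\int_{[0,+\infty)}\frac{\dd U(t)}{(t+x)^\rho}
\]
as the generalized Stieltjes transform of $U$ of order $\rho$.

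For the Abelian direction (the first relation implies the second) I would first integrate by parts. Since $U(0^-)=0$ and, under $U(x)\sim c\,x^\sigma L(x)$ with $\sigma<\rho$, the boundary term at infinity obeys $(t+x)^{-\rho}U(t)\to0$, one obtains
\[
\widehat U(x)=\rho\int_0^{+\infty}\frac{U(t)}{(t+x)^{\rho+1}}\,\dd t=\rho\,x^{-\rho}\int_0^{+\infty}\frac{U(xs)}{(1+s)^{\rho+1}}\,\dd s
\]
after the substitution $t=xs$. By the uniform convergence theorem for slowly varying functions $U(xs)\sim c\,x^\sigma s^\sigma L(x)$ pointwise in $s$, while Potter's bounds furnish an integrable dominating function on $(0,+\infty)$ (integrability holds precisely because $\sigma<\rho$), so dominated convergence yields
\[
\widehat U(x)\sim \rho\,c\,x^{\sigma-\rho}L(x)\int_0^{+\infty}\frac{s^\sigma}{(1+s)^{\rho+1}}\,\dd s.
\]
The last integral equals the Beta value $B(\sigma+1,\rho-\sigma)=\Gamma(\sigma+1)\Gamma(\rho-\sigma)/\Gamma(\rho+1)$, and multiplying by $\rho$ together with $\Gamma(\rho+1)=\rho\,\Gamma(\rho)$ reproduces exactly the constant $\Gamma(\rho-\sigma)\Gamma(\sigma+1)/\Gamma(\rho)$ in the statement.

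For the converse Tauberian direction I would reduce the Stieltjes transform to Laplace transforms through the identity $(t+x)^{-\rho}=\tfrac{1}{\Gamma(\rho)}\int_0^{+\infty}u^{\rho-1}e^{-u(t+x)}\,\dd u$. Interchanging the order of integration (justified by Tonelli) gives
\[
\widehat U(x)=\frac{1}{\Gamma(\rho)}\int_0^{+\infty}u^{\rho-1}e^{-ux}\,\omega(u)\,\dd u,\qquad \omega(u):=\int_{[0,+\infty)}e^{-ut}\,\dd U(t),
\]
so that $\Gamma(\rho)\widehat U$ is the Laplace–Stieltjes transform of the nondecreasing function $W(u):=\int_0^u v^{\rho-1}\omega(v)\,\dd v$. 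Applying the classical Karamata Tauberian theorem (\cite[Theorem 1.7.1]{BGT89}) to $\widehat U(x)\sim C\,x^{\sigma-\rho}L(x)$ transfers this into $W(u)\sim \tfrac{c\,\Gamma(\sigma+1)}{\rho-\sigma}\,u^{\rho-\sigma}L(1/u)$ as $u\to0^+$; differentiating back to $\omega(u)\sim c\,\Gamma(\sigma+1)\,u^{-\sigma}L(1/u)$ and invoking Karamata once more (now relating $\omega$ near $0^+$ to the tail of $U$ at $+\infty$) produces $U(x)\sim c\,x^\sigma L(x)$, the $\Gamma$-factors arranging so that the constant $c$ survives unchanged.

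The main obstacle is the step in which $W$ is differentiated back to $\omega$: the monotone density theorem requires the density $u^{\rho-1}\omega(u)$ to be eventually monotone, which is not automatic, since $u^{\rho-1}$ increases while $\omega$ decreases. I expect to resolve this using the complete monotonicity of $\omega$ (it is a Laplace–Stieltjes transform of a nondecreasing function), which forces $\omega$ to be regularly varying with a smooth, monotone representative and hence legitimizes the density step. A cleaner, fully self-contained alternative is to treat the whole equivalence as a Mellin-convolution Tauberian statement: the kernel $(1+y)^{-\rho}$ has Mellin transform $B(z,\rho-z)=\Gamma(z)\Gamma(\rho-z)/\Gamma(\rho)$, finite and nonzero throughout $0<\Re z<\rho$ and in particular at the critical exponent $z=\sigma+1$, which is exactly the condition under which the de Haan Mellin-transform Tauberian theorem reverses the Abelian computation above and returns the same constant.
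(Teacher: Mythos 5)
This statement is imported verbatim from \cite[Theorem 1.7.4]{BGT89}: the paper gives no proof of its own, so there is no internal argument to compare against, and your proposal should be judged as a free-standing proof. It is essentially the standard one. The Abelian direction is correct as written: integration by parts is legitimate because $U(0^-)=0$ and $U(T)(T+x)^{-\rho}\to0$ (as $U(T)=o(T^{\rho})$ when $\sigma<\rho$), the Potter-bound domination works precisely because $0\le\sigma<\rho$ leaves room for a small $\delta$ on both ends of the integral, and the constants close up: $\rho\,B(\sigma+1,\rho-\sigma)=\Gamma(\sigma+1)\Gamma(\rho-\sigma)/\Gamma(\rho)$. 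The Tauberian direction via the iterated-Laplace representation and a double application of Karamata's theorem \cite[Theorem 1.7.1]{BGT89} is also the right route, and your bookkeeping of the constants through $W(u)\sim\frac{c\,\Gamma(\sigma+1)}{\rho-\sigma}u^{\rho-\sigma}L(1/u)$ and $\omega(u)\sim c\,\Gamma(\sigma+1)u^{-\sigma}L(1/u)$ is correct. You should also record that $\omega(u)<+\infty$ for all $u>0$, which follows from finiteness of the Stieltjes transform for a single $x$, and that finiteness for one $x$ propagates to all $x>0$ since $(t+x_0)/(t+x)\le x_0/x$ for $x\le x_0$.

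The one step you flag as an obstacle is real only for $\rho\ge1$ (for $\rho<1$ the density $u^{\rho-1}\omega(u)$ is a product of nonincreasing functions and the Monotone Density Theorem applies verbatim), and complete monotonicity is more machinery than you need: plain monotonicity of $\omega$ already suffices. Since $\omega$ is nonincreasing, for $0<\beta<1$ one has
\[
\omega(u)\,\frac{u^{\rho}\left(1-\beta^{\rho}\right)}{\rho}\;\le\;W(u)-W(\beta u)\;\le\;\omega(\beta u)\,\frac{u^{\rho}\left(1-\beta^{\rho}\right)}{\rho},
\]
and dividing by $u^{\rho-\sigma}L(1/u)$, inserting the known asymptotics of $W$ at $0^+$, and letting $\beta\to1^-$ squeezes $u^{\sigma}\omega(u)/L(1/u)$ to $c\,\Gamma(\sigma+1)$; this is exactly the proof of the Monotone Density Theorem rerun against the monotone factor $\omega$ rather than against the full density. (Incidentally, this sandwich is the same device the paper itself uses in the converse half of its Theorem \ref{thm:taub0}.) Your closing remark about the Mellin-convolution formulation is a legitimate alternative, though to make the kernel a genuine function of $x/t$ one should compare $x^{\rho}\widehat U(x)$ with $U$, giving the kernel $\rho y^{\rho}/(1+y)^{\rho+1}$ whose Mellin transform $\Gamma(\rho-z)\Gamma(z+1)/\Gamma(\rho)$ evaluated at $z=\sigma$ reproduces the constant; the exponent you name is off by one in that bookkeeping, but this is a side remark and does not affect the main proof.
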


\begin{Corollary}\label{cor:taub0}
	Let $L\in\mathcal{R}_0$ and let  $\alpha\in[p,p+1)$ for some $p\in\mathbb{N}\cup\{0\}$.
	The following assertions are equivalent:
	\begin{enumerate}[(i)]
		\item $\bar{\mu}(x)\sim x^{-\alpha}L(x)$, 
		\item 
		$\int_{[0,x]} t^{p+1}\mu(\dd t) \sim 
		\frac{\alpha}{p+1-\alpha} x^{p+1-\alpha}L(x)$,
		\item $\int_{[0,+\infty)} \frac{ t^{p+1}}{(t+x)^{p+2}}\mu(\dd t) \sim \frac{\alpha \Gamma(\alpha+1)\Gamma(p+1-\alpha)}{(p+1)!} x^{-\alpha-1}L(x)$.
	\end{enumerate}
\end{Corollary}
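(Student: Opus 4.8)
The plan is to obtain both equivalences directly from the two Tauberian theorems stated above, using (ii) as the common intermediate statement and choosing the parameters so that the conclusions match (i) and (iii) verbatim.

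First I would establish (i) $\Leftrightarrow$ (ii) by applying Theorem \ref{thm:TaubMom} with $n=p+1$. Since $\alpha\in[p,p+1)$ and (in the main case) $\alpha>0$, the hypothesis $0<\alpha<n=p+1$ holds, and the theorem yields precisely the equivalence of $\bar{\mu}(x)\sim x^{-\alpha}L(x)$ with $\int_{[0,x]} t^{p+1}\mu(\dd t)\sim \frac{\alpha}{p+1-\alpha} x^{p+1-\alpha}L(x)$, which is (ii).

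Next, for (ii) $\Leftrightarrow$ (iii), I would set $U(x)=\int_{[0,x]} t^{p+1}\mu(\dd t)$, which is nondecreasing with $U(0^-)=0$, so that $\dd U(t)=t^{p+1}\mu(\dd t)$ and hence $\int_{[0,+\infty)} (t+x)^{-\rho}\dd U(t)=\int_{[0,+\infty)} t^{p+1}(t+x)^{-\rho}\mu(\dd t)$. Taking $\rho=p+2$, $\sigma=p+1-\alpha$, and $c=\alpha/(p+1-\alpha)$, I would verify that the constraints $\rho>0$, $c>0$, and $0\leq\sigma<\rho$ all hold on the range $\alpha\in(0,p+1)$. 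Theorem \ref{thm:TaubSt} then equates (ii), namely $U(x)\sim c\,x^{\sigma}L(x)$, with the statement $\int_{[0,+\infty)} t^{p+1}(t+x)^{-(p+2)}\mu(\dd t)\sim c\,\frac{\Gamma(\rho-\sigma)\Gamma(\sigma+1)}{\Gamma(\rho)} x^{\sigma-\rho}L(x)$.

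It remains only to simplify the constant and the exponent. Here $\sigma-\rho=-\alpha-1$, and using $\rho-\sigma=1+\alpha$, $\sigma+1=p+2-\alpha$, $\Gamma(p+2-\alpha)=(p+1-\alpha)\Gamma(p+1-\alpha)$, and $\Gamma(p+2)=(p+1)!$, the factor $c\,\Gamma(\rho-\sigma)\Gamma(\sigma+1)/\Gamma(\rho)$ collapses to $\frac{\alpha\,\Gamma(\alpha+1)\Gamma(p+1-\alpha)}{(p+1)!}$, matching (iii) exactly. The only delicate point is the boundary value $\alpha=0$ (possible only when $p=0$), where both $c$ and the target constant vanish and the Tauberian theorems do not apply in the stated form; this case is degenerate and I would treat it separately or note its exclusion, since (ii) and (iii) then reduce to trivial statements. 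No remaining step poses a genuine obstacle: the argument is essentially a bookkeeping of parameters together with one $\Gamma$-function identity.
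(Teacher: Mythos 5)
Your proposal is correct and follows essentially the same route as the paper: Theorem \ref{thm:TaubMom} with $n=p+1$ for (i) $\Leftrightarrow$ (ii), and Theorem \ref{thm:TaubSt} applied to $U(x)=\int_{[0,x]}t^{p+1}\mu(\dd t)$ with $\rho=p+2$, $\sigma=p+1-\alpha$, $c=\alpha/(p+1-\alpha)$ for (ii) $\Leftrightarrow$ (iii), including the same $\Gamma$-function simplification. Your remark about the degenerate boundary $\alpha=0$ (only possible for $p=0$) is a fair point that the paper glosses over, though that case is handled elsewhere in the paper by entirely different means (Theorem \ref{thm:a0}) and the corollary is only invoked for $\alpha>0$.
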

\begin{proof}
	Equivalence between $(i)$ and $(ii)$ follows from Theorem \ref{thm:TaubMom}.
	
	To prove equivalence between $(ii)$ and $(iii)$ define $U(x):=\int_{[0,x]} t^{p+1}\mu(\dd t)$. In such case,
	\[
	\int_{[0,+\infty)} \frac{ t^{p+1}}{(t+x)^{p+2}}\mu(\dd t) = \int_{[0,+\infty)} \frac{ \dd U(t)}{(t+x)^{p+2}}.
	\]
	Moreover, $U$ is nondecreasing and $U(0^-)=0$. Thus, the result follows from Theorem \ref{thm:TaubSt} with $\rho=p+2$, $c=\alpha/(p+1-\alpha)$ and $\sigma=p+1-\alpha$.
\end{proof}

\begin{defin}
	For a kernel $k\colon(0,+\infty)\to\R$ its Mellin transform is defined as
	\[
	\check{k}(z):=\int_0^\infty \frac1{t^{z}} k(t)\frac{\dd t}{t}
	\]
	for $z\in\mathbb{C}$ such that the integral converges. 
	
	For functions $k, f\colon(0,+\infty)\to\R$ we define their Mellin convolution by
	\[
	k\stackrel{M}{\ast}f(x):=\int_0^\infty k\left(\frac xt\right) f(t)\frac{\dd t}{t}
	\]
	for those $x>0$ for which the integral converges.
\end{defin}

\begin{defin}
	For $p\in\mathbb{N}\cup\{0\}$ we define kernel $k_p\colon \R\to [0,1]$ by
	\begin{align}\label{def:kp}
		k_p(x) = x^{p+1}I_{[0,1]}(x).
	\end{align}
\end{defin}
	It can be easily checked that $\check{k}_p$ is absolutely convergent for $\Re z<p+1$ and for such $z$ we have
\begin{align}\label{eq:kpcheck}
	\check{k}_p(z) = \frac{1}{1+p-z}.
\end{align}	

\begin{thm}\label{thm:taub0}
	Let $\gamma<1+p$ and let $L\in\mathcal{R}_0$. Assume that 
	\begin{align}\label{eq:1}
		f(x)\sim x^\gamma L(x).
	\end{align}
	Then,
	\begin{align}\label{eq:2}
		k_p\stackrel{M}{\ast}f(x)\sim \check{k}_p(\gamma)x^\gamma L(x).
	\end{align}
	If $f$ is monotonic, then \eqref{eq:2} implies \eqref{eq:1}.
\end{thm}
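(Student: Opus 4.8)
The plan is to first rewrite the Mellin convolution in a transparent form. Since $k_p(x/t)=(x/t)^{p+1}$ precisely when $t\ge x$ and vanishes otherwise, one has
\[
k_p\stackrel{M}{\ast}f(x)=\int_x^\infty\Big(\frac xt\Big)^{p+1}f(t)\,\frac{\dd t}{t}=x^{p+1}\int_x^\infty t^{-p-2}f(t)\,\dd t=:x^{p+1}G(x).
\]
This reduces the whole statement to Karamata's theorem together with its monotone-density converse: the Abelian half says that $G$ inherits regular variation from $f$, and the Tauberian half recovers $f$ from $G$ under monotonicity. Note that the hypothesis $\gamma<p+1$ is exactly what forces the exponent of the integrand $t^{-p-2}f(t)\sim t^{\gamma-p-2}L(t)$ to be below $-1$, so that $G(x)$ converges and is finite for large $x$.

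For the direct implication \eqref{eq:1}$\Rightarrow$\eqref{eq:2}, I would first replace $f$ by its asymptotic equivalent inside the integral: given $\eps>0$, the bound $(1-\eps)t^\gamma L(t)\le f(t)\le(1+\eps)t^\gamma L(t)$ valid for $t$ large shows $G(x)\sim\int_x^\infty t^{\gamma-p-2}L(t)\,\dd t$. Then Karamata's theorem \cite[Theorem 1.5.11]{BGT89}, applied with exponent $\gamma-p-2<-1$, yields
\[
\int_x^\infty t^{\gamma-p-2}L(t)\,\dd t\sim\frac{1}{p+1-\gamma}\,x^{\gamma-p-1}L(x).
\]
Multiplying by $x^{p+1}$ gives $k_p\stackrel{M}{\ast}f(x)\sim\frac{1}{p+1-\gamma}x^\gamma L(x)$, and since $\check k_p(\gamma)=\frac1{1+p-\gamma}$ by \eqref{eq:kpcheck}, this is exactly \eqref{eq:2}.

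For the converse, assuming $f$ monotone and $G(x)\sim\frac{1}{p+1-\gamma}x^{\gamma-p-1}L(x)$, I would run the classical monotonicity (sandwich) argument rather than quote the monotone density theorem directly, because $t\mapsto t^{-p-2}f(t)$ need not be monotone when $f$ is nondecreasing. Treating the nondecreasing case (the nonincreasing one being symmetric), for $\lambda>1$ the monotonicity of $f$ on $[x,\lambda x]$ gives
\[
f(x)\,\frac{x^{-p-1}}{p+1}\big(1-\lambda^{-p-1}\big)\le G(x)-G(\lambda x)\le f(\lambda x)\,\frac{x^{-p-1}}{p+1}\big(1-\lambda^{-p-1}\big).
\]
Dividing by $x^\gamma L(x)$, using $G(x)-G(\lambda x)\sim\frac{1}{p+1-\gamma}x^{\gamma-p-1}L(x)\big(1-\lambda^{\gamma-p-1}\big)$ and $L(\lambda x)\sim L(x)$, the left inequality bounds $\limsup f(x)/(x^\gamma L(x))$ and (after the shift $x\mapsto x/\lambda$) the right one bounds $\liminf$, both by explicit functions of $\lambda$. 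Letting $\lambda\to1^+$ and evaluating $\lim_{\lambda\to1}\frac{1-\lambda^{\gamma-p-1}}{1-\lambda^{-p-1}}=\frac{p+1-\gamma}{p+1}$ collapses both bounds to $1$, whence $f(x)\sim x^\gamma L(x)$. I expect the main obstacle to be precisely this last step: arranging the two-sided sandwich so that the prefactors cancel exactly and the limits $\lambda\to1$ pinch $f/(x^\gamma L)$ to $1$. One must also first observe that the hypotheses force $f$ to be eventually positive (a monotone $f$ with positive tail integral cannot be eventually nonpositive), so that the monotone estimates and the division by $x^\gamma L(x)$ are legitimate.
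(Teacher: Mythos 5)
Your proof is correct and follows essentially the same route as the paper: the same rewriting $k_p\stackrel{M}{\ast}f(x)=x^{p+1}\int_x^\infty t^{-p-2}f(t)\,\dd t$, Karamata's theorem for the Abelian direction, and the identical monotone sandwich on $G(x)-G(\lambda x)$ over $[x,\lambda x]$ (the paper uses $\beta>1$ and $\beta<1$ separately where you use $\lambda>1$ plus a shift, which is the same computation) with the pinching limit $\lambda\to1^+$ for the Tauberian converse.
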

\begin{proof}
	Going from \eqref{eq:1} to \eqref{eq:2} is standard and follows from \cite[Theorem 4.1.6]{BGT89}. 
	
	Assume \eqref{eq:2}. Without loss of generality, we assume that $f$ is nondecreasing (if not, consider $-f$).  Let 
	\[
	F(x) := x^{-p-1}\left(k_p\stackrel{M}{\ast}f(x)\right) = \int_x^\infty \frac{f(t)}{t^{p+2}}\dd t.
	\] 
	Recall that $\check{k}_p(\gamma)=\frac{1}{1+p-\gamma}$.
	We have
	\[
	F(x)\sim \frac{1}{1+p-\gamma} x^{\gamma-p-1}L(x).
	\]
	By monotonicity of $f$, for any $\beta>1$ and $x>0$, we have
	\[
	F(x)-F(\beta x) = \int_x^{\beta x} \frac{f(t)}{t^{p+2}}\dd t \geq \frac{1}{p+1}\frac{f(x)}{x^{p+1}}\left( 1 - \frac1{\beta^{p+1}} \right).
	\]
	Thus,
	\[
	\frac{f(x)}{x^\gamma L(x)}\leq (p+1)\frac{F(x)-F(\beta x)}{x^{\gamma-p-1} L(x)}\frac{\beta^{p+1}}{\beta^{p+1}-1}.
	\]
	Whence,
	\[
	\limsup_{x\to+\infty} \frac{f(x)}{x^\gamma L(x)}\leq (p+1) \frac{1-\beta^{\gamma-p-1}}{1+p-\gamma}\frac{\beta^{p+1}}{\beta^{p+1}-1} 
	\]
	and the r.h.s. above converges to $1$ as $\beta\to1^+$. 
	
	Let now $\beta<1$. Again by monotonicity we have
	\[
	F(\beta x)-F(x) \leq \int_{\beta x}^x \frac{f(t)}{t^{p+2}}\dd t\leq \frac{f(x)}{x^{p+1}} \frac{1}{p+1}\left(\frac{1}{\beta^{p+1}}-1\right).
	\]
	We proceed similarly as before and show that 
	\[
	\liminf_{x\to+\infty} \frac{f(x)}{x^\gamma L(x)} \geq 1.
	\]
\end{proof}

\begin{thm}\label{thm:tauber}
	Assume that $k$ is a nonnegative kernel such that $\check{k}(z)$ is absolutely convergent in a strip $|\Re z|\leq \eta$ for some $\eta>0$ and $\check{k}(z)\neq 0$ for $\Re z=0$. Let $f\colon(0,+\infty)\to\R$ be measurable, $L\in \mathcal{R}_0$ and $c\neq0$.
	\begin{enumerate}[(i)]
		\item Assume additionally that $x^\eta f(x)$ is bounded on every interval $(0,a]$.  Then
		\begin{align}\label{eq:abel20}
			f (x)\sim c L(x)
		\end{align}
		implies
		\begin{align}\label{eq:taub20}
			k\stackrel{M}{\ast}f(x)\sim \check{k}(0) c L(x).
		\end{align}
		Conversely, if $f$ is monotonic, then \eqref{eq:taub20} implies \eqref{eq:abel20}.	
		\item Assume additionally that $f$ is locally bounded on $[0,+\infty)$. Then
		\begin{align}\label{eq:abel2}
			f \in \Pi_L(c)
		\end{align}
		implies
		\begin{align}\label{eq:taub2}
			k\stackrel{M}{\ast}f\in \Pi_L(\check{k}(0) c).
		\end{align}
		Conversely, if $f$ is monotonic, then \eqref{eq:taub2} implies \eqref{eq:abel2}.	
	\end{enumerate}
\end{thm}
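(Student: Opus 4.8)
The plan is to treat the Abelian implications and the Tauberian (converse) implications separately, and in each case to pass from the multiplicative to the additive picture via the substitution $x=e^u$, $s=e^v$, under which
\[
k\stackrel{M}{\ast}f(x)=\int_0^\infty k(s)\,f(x/s)\,\frac{\dd s}{s}
\]
becomes the ordinary convolution $(K\ast F)(u)$ with $K(v):=k(e^v)$, $F(u):=f(e^u)$, and $\check k(i\tau)=\hat K(\tau)$. In these terms, absolute convergence of $\check k$ in $|\Re z|\le\eta$ says precisely that $\int_\R |K(v)|e^{\eta|v|}\,\dd v<+\infty$, while the hypothesis $\check k\neq0$ on $\Re z=0$ is exactly the Wiener condition that $\hat K$ has no real zeros. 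Note also $\check k(0)=\int_0^\infty k(t)\,\frac{\dd t}{t}\neq0$.

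For the Abelian implication in $(i)$ I would write
\[
\frac{k\stackrel{M}{\ast}f(x)}{L(x)}=\int_0^\infty k(s)\,\frac{f(x/s)}{L(x/s)}\,\frac{L(x/s)}{L(x)}\,\frac{\dd s}{s}
\]
and pass to the limit by dominated convergence: for fixed $s$ the integrand tends to $c\,k(s)/s$ by the definitions of slow variation and of $f(x)\sim cL(x)$. A dominating function is produced by splitting the range at $s=x/a$. On $s\le x/a$ (so $x/s\ge a$) the ratio $f/L$ is bounded and Potter's bound gives $L(x/s)/L(x)\le A\max(s^\delta,s^{-\delta})$ with $\delta<\eta$, whence the integrand is dominated by $C\,k(s)(s^\delta+s^{-\delta})/s$, integrable because $\check k(\pm\delta)<+\infty$; on $s>x/a$ the growth hypothesis $|f(t)|\le Mt^{-\eta}$ for $t\le a$, together with $\int_{x/a}^\infty k(s)s^\eta\,\frac{\dd s}{s}\to0$ and $x^{-\eta}/L(x)\to0$, shows this piece is negligible. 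This yields the limit $c\,\check k(0)$. The Abelian implication in $(ii)$ is handled identically after replacing $f(x/s)/L(x)$ by $[f(\lambda x/s)-f(x/s)]/L(x)$ and invoking the uniform convergence theorem and Potter-type bounds for the de Haan class $\Pi_L$ in place of those for $\mathcal{R}_0$; local boundedness of $f$ plays the role of the growth bound in controlling the small-argument part, the tail again decaying like $o(x^{-\eta})$ thanks to the exponential integrability of $K$.

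The converse directions are the substance of the theorem, and here the Wiener condition is used. Assuming (without loss) that $f$ is nondecreasing, I would first establish the a priori bound $f(x)=O(L(x))$ by using monotonicity to bound $f(x/b)\int_a^b k(s)\,\frac{\dd s}{s}\le k\stackrel{M}{\ast}f(x)$ over a fixed window $[a,b]$ with $\int_a^b k(s)\,\frac{\dd s}{s}>0$, the full convolution being $\asymp L(x)$; this is the same monotone device used in the proof of Theorem \ref{thm:taub0}. Setting $\phi:=F/\ell$ with $\ell(u):=L(e^u)$, the bound $f=O(L)$ makes $\phi$ bounded, and since $\ell(u-v)/\ell(u)\to1$ one checks by dominated convergence that $(K\ast\phi)(u)\to\check k(0)c=\hat K(0)c$. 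Wiener's Tauberian theorem, applicable because $\hat K$ has no real zeros, then gives $(H\ast\phi)(u)\to\check H(0)\,c$ for every $H\in L_1$, i.e. $H\stackrel{M}{\ast}f(x)\sim c\,\check H(0)\,L(x)$ for every admissible kernel $H$. Choosing $H$ to be an approximate identity, namely a normalized indicator of a short logarithmic interval, and sandwiching the monotone $f$ between nearby values of $H\stackrel{M}{\ast}f$ yields $f(x)\sim cL(x)$. The $\Pi_L$ converse in $(ii)$ follows the same scheme, with increments $f(\lambda x)-f(x)$ in place of $f$ and the $\Pi_L$ uniform convergence theorem replacing that for $\mathcal{R}_0$.

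I expect the main obstacle to be the converse direction, specifically carrying the slowly varying (resp. de Haan) weight $L$ through the Wiener inversion: the classical Wiener Tauberian theorem delivers convergence to a constant, so care is needed to divide by $L(x)$ inside the convolution and to justify, via the uniform convergence theorem and the a priori $O(L)$ bound coming from monotonicity, that the deconvolved quantity inherits the exact asymptotic $cL(x)$ (resp. the precise $\Pi_L$ behavior with constant $c$). Establishing that a priori boundedness and closing the final approximate-identity sandwich are the two points at which the monotonicity hypothesis is genuinely used.
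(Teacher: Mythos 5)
Your proposal reconstructs the Wiener Tauberian machinery from scratch, whereas the paper does not actually prove this theorem: the remark following the statement assembles it entirely from \cite{BGT89} — Theorem 4.1.6 and Section 4.11.1 for the two Abelian implications, Theorems 4.8.3 and 4.11.2 for the two converses — observing only that the Tauberian condition \eqref{eq:taubcond2} required by the cited results is automatic when $f$ is monotone. Your Abelian argument (exponential substitution, dominated convergence with Potter bounds after splitting at $s=x/a$, the growth hypothesis $|f(t)|\le Mt^{-\eta}$ near $0$ paired with $\check{k}(-\eta)<+\infty$ to kill the tail) is precisely the proof of \cite[Theorem 4.1.6]{BGT89}, and your Wiener-theorem scheme for the converses is the standard one, so the approach is sound. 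Two places where the sketch is thinner than it reads: first, the a priori bound $f=O(L)$ via the monotone window $f(x/b)\int_a^b k(s)\,\dd s/s\le k\stackrel{M}{\ast}f(x)$ silently discards the contribution of $s\notin[a,b]$, which is only legitimate if $f$ is eventually nonnegative (or if the small-argument part is controlled again by the growth/local-boundedness hypothesis); second, in the $\Pi_L$ converse the increments $x\mapsto f(\lambda x)-f(x)$ of a monotone $f$ are not themselves monotone, so the approximate-identity sandwich does not transfer verbatim — what monotonicity really buys is the one-sided slow-decrease condition \eqref{eq:taubcond2}, and the deconvolution in \cite[Theorem 4.11.2]{BGT89} is run with that condition rather than by literally rerunning part $(i)$ on increments. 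Neither point is a fatal gap, and you correctly flag the $L$-weighted Wiener inversion as the delicate step, but a self-contained write-up would need to supply both; the paper's choice to cite \cite{BGT89} avoids exactly these technicalities at the cost of being non-self-contained.
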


\begin{remark}
	The above theorem follows from Section \ref{sec:4} in \cite{BGT89}, however is not stated as a single theorem. Let us explain briefly which results from \cite{BGT89} we use here. 
	
	The implication from \eqref{eq:abel20} to \eqref{eq:taub20} follows directly from \cite[Theorem 4.1.6]{BGT89} with $\rho=0$ and  \eqref{eq:abel2} implies \eqref{eq:taub2} by \cite[Section 4.11.1]{BGT89}. 
	The converse implications holds by  \cite[Theorem 4.8.3]{BGT89} and \cite[Theorem 4.11.2]{BGT89}, respectively, under a Tauberian condition
	\begin{align}\label{eq:taubcond2}
		\lim_{\lambda\to1^+} \limsup_{x\to+\infty} \sup_{\mu\in [1,\lambda]} \frac{f(x) - f(\mu x)}{L(x)}= 0,
	\end{align}
	which is clearly satisfied if $f$ is nondecreasing. If $f$ is nonincreasing, then we consider $-f$.
\end{remark}

In the next theorem we state some useful conditions for a measure to have a regularly varying tails.

\begin{thm}\label{thm:taubPi}
	Let $L\in\mathcal{R}_0$ and $p\geq 0$.
	The following assertions are equivalent:
	\begin{enumerate}[(i)]
		\item $\bar{\mu}(x)\sim x^{-p-1} L(x)$,
		\item 
		$x\mapsto \int_{[0,x]} t^{p+1}\mu(\dd t) \in \Pi_L(p+1)$,
		\item $x\mapsto x^{p+2}\int_{[0,+\infty)} \frac{ t^{p+1}}{(t+x)^{p+2}}\mu(\dd t) \in \Pi_L(p+1)$.
	\end{enumerate}
\end{thm}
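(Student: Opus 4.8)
The plan is to introduce the auxiliary functions
\[
U(x) := \int_{[0,x]} t^{p+1}\mu(\dd t), \qquad V(x) := x^{p+2}\int_{[0,+\infty)} \frac{t^{p+1}}{(t+x)^{p+2}}\mu(\dd t),
\]
so that $(ii)$ and $(iii)$ read $U\in\Pi_L(p+1)$ and $V\in\Pi_L(p+1)$. Since $U$ is nondecreasing and finite on bounded sets, it is exactly the sort of monotone, locally bounded function to which both the Mellin--Tauberian machinery of Theorem~\ref{thm:tauber} and the de Haan representation of Theorem~\ref{thm:PiL} apply. I would establish the three equivalences through the two bridges $(i)\Leftrightarrow(ii)$ and $(ii)\Leftrightarrow(iii)$.

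For $(i)\Rightarrow(ii)$ I would compute the increment directly. Writing $t^{p+1}=x^{p+1}+(p+1)\int_x^t s^p\,\dd s$ and applying Fubini gives the exact identity
\[
U(\lambda x)-U(x)=x^{p+1}\bar\mu(x)-(\lambda x)^{p+1}\bar\mu(\lambda x)+(p+1)\int_x^{\lambda x} s^{p}\bar\mu(s)\,\dd s,\qquad \lambda>1.
\]
Under $(i)$ the two boundary terms are each $L(x)(1+o(1))$ (using $L(\lambda x)\sim L(x)$), so they cancel to $o(L(x))$, while $s^p\bar\mu(s)\sim s^{-1}L(s)$ yields $\int_x^{\lambda x}s^p\bar\mu(s)\,\dd s\sim L(x)\log\lambda$ by the uniform convergence theorem for slowly varying functions. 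Hence $\big(U(\lambda x)-U(x)\big)/L(x)\to(p+1)\log\lambda$, i.e. $U\in\Pi_L(p+1)$.

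For $(ii)\Leftrightarrow(iii)$ I would realize $V$ as a Mellin convolution of $U$. Transferring the Stieltjes integral onto $U$ by parts gives $\int_{[0,\infty)}(t+x)^{-(p+2)}\dd U(t)=(p+2)\int_0^\infty U(t)(t+x)^{-(p+3)}\,\dd t$, and the substitution $t=xs$ produces $V=k\stackrel{M}{\ast}U$ with the nonnegative kernel $k(u)=(p+2)\,u^{p+2}/(1+u)^{p+3}$. A Beta-integral computation gives $\check k(z)=\Gamma(p+2-z)\Gamma(1+z)/(p+1)!$, which is absolutely convergent for $-1<\Re z<p+2$, never vanishes (no pole of $\Gamma$ is hit and $\Gamma$ has no zeros), and satisfies $\check k(0)=1$. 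Theorem~\ref{thm:tauber}$(ii)$ then yields $U\in\Pi_L(p+1)\Rightarrow V\in\Pi_L(\check k(0)(p+1))=\Pi_L(p+1)$, and since $U$ is monotone the Tauberian converse of the same theorem supplies $(iii)\Rightarrow(ii)$.

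It remains to close the loop with $(ii)\Rightarrow(i)$, which I expect to be the main obstacle, since $\Pi_L$ only controls increments of $U$ and not the tail $\bar\mu$ pointwise. Here I would apply the de Haan identity of Theorem~\ref{thm:PiL}$(iii)$ to $U\in\Pi_L(p+1)$ with $\sigma=p+2$; a Fubini computation collapses its left-hand side to $x^{p+2}\int_{(x,\infty)}t^{-1}\mu(\dd t)$, so that $W(x):=\int_{(x,\infty)}t^{-1}\mu(\dd t)\sim\frac{p+1}{p+2}x^{-p-2}L(x)$. Thus the measure $t^{-1}\mu(\dd t)$ has regularly varying tail of index $-(p+2)$, and one recovers $\bar\mu$ by integrating by parts, $\bar\mu(x)=xW(x)+\int_x^\infty W(t)\,\dd t$; a Karamata estimate for the integral of the regularly varying $W$ gives $\int_x^\infty W(t)\,\dd t\sim\frac1{p+1}xW(x)$, and summing the two contributions produces exactly $\bar\mu(x)\sim x^{-p-1}L(x)$. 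The delicate point throughout is the bookkeeping of these cancellations at the critical index $\alpha=p+1$, where the simple Tauberian theorems used in Corollary~\ref{cor:taub0} degenerate and the $\Pi_L$ refinement is genuinely required.
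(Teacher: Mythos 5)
Your proof is correct and follows essentially the same route as the paper: the same two bridges $(i)\Leftrightarrow(ii)$ and $(ii)\Leftrightarrow(iii)$, the same kernel $k(u)=(p+2)u^{p+2}/(1+u)^{p+3}$ fed into Theorem~\ref{thm:tauber}, and the same de Haan machinery of Theorem~\ref{thm:PiL} for the first bridge. The only cosmetic difference is in $(ii)\Rightarrow(i)$, where the paper applies Theorem~\ref{thm:PiL}$(iii)$ with $\sigma=p+1$, so that the Fubini computation collapses directly to $x^{p+1}\bar{\mu}(x)\sim L(x)$ and your extra Karamata integration step at $\sigma=p+2$ is not needed.
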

\begin{proof}
	First we show the equivalence between $(i)$ and $(ii)$. Let us denote $\sigma=p+1$.
	Observe that by Tonelli's theorem we have
	\begin{align*}
		\int_{[0,x]} t^\sigma\mu(\dd t) &= \int_{[0,x]} \int_0^t \sigma s^{\sigma-1}\dd s\,\mu(\dd t) = \sigma \int_0^x s^{\sigma-1} \mu\big((s,x]\big) \dd s \\
		&=   \sigma \int_0^x s^{\sigma-1} \bar{\mu}(s) \dd s - x^{\sigma}\bar{\mu}(x).
	\end{align*}
	Thus, if $(i)$ is assumed, then
	\[
	\int_{[0,x]} t^\sigma\mu(\dd t) =  \sigma\int_0^x (1+o(1))\frac{L(s)}{s} \dd s - L(x)(1+o(1)).
	\]
	By Theorem \ref{thm:PiL} we obtain the result.
	
	Conversely, assume $(ii)$ and observe that with $f(x):=\int_{[0,x]} t^\sigma\mu(\dd t)$, by $(iii)$ of Theorem \ref{thm:PiL} we obtain that 
	\begin{align*}
		\sigma x^\sigma \int_x^\infty u^{-\sigma-1} f(u)\dd u-f(x) \sim L(x).
	\end{align*}
	But the left hand side is equal to
	\begin{align*} 
		\sigma x^\sigma \int_x^\infty u^{-\sigma-1} \left(f(u)-f(x) \right)\dd u &= \sigma x^\sigma \int_x^\infty u^{-\sigma-1} \int_{(x,u]} t^\sigma \mu(\dd t)\,\dd u \\
		&= 
		\sigma x^\sigma \int_{(x,+\infty)} t^\sigma \int_t^\infty u^{-\sigma-1}\dd u\, \mu(\dd t) \\
		&= x^\sigma \bar{\mu}(x),
	\end{align*}
	which gives $(i)$.

	To show equivalence between $(ii)$ and $(iii)$ we use Theorem \ref{thm:tauber}. First take $k(x)= (p+2)x^{p+2}/(1+x)^{p+3}$. Then, it is clear that $\check{k}(z)$ is absolutely convergent for $|\Re z|<1$ and $\check{k}(z)\neq 0$ for $\Re z=0$. Moreover, $\check{k}(0)=1$. As before, define  $f(x)=\int_{[0,x]}t^{p+1}\mu(\dd t)$. Since $f$ is nondecreasing, by Theorem \ref{thm:tauber} $(ii)$, we see that
	\[
	f\in\Pi_L(p+1)\qquad \mbox{if and only if}\qquad  k\stackrel{M}{\ast}f\in \Pi_L(p+1).
	\]
	Finally,
	\[
	k\stackrel{M}{\ast}f(x) = \int_0^\infty k\left(\frac xt\right) f(t)\frac{\dd t}{t} = x^{p+2}\int_{[0,+\infty)} \frac{\dd f(t)}{(t+x)^{p+2}} = x^{p+2}\int_{[0,+\infty)} \frac{ t^{p+1}}{(t+x)^{p+2}}\mu(\dd t),
	\]
	where the second equality follows from integration by parts.
\end{proof}

Below we put into evidence that for $p=0$ in the theorem above we can replace $(iii)$ by another condition, which will turn out to be useful.
\begin{thm}\label{thm:taubA1}
	Let $L$ be a slowly varying function.
	The following assertions are equivalent:
	\begin{enumerate}[(i)]
		\item $\bar{\mu}(x)\sim x^{-1} L(x)$,
		\item 
		$x\mapsto \int_{[0,x]} t\,\mu(\dd t) \in \Pi_L(1)$,
		\item $x\mapsto x\int_{[0,+\infty)} \frac{ t}{t+x}\mu(\dd t) \in \Pi_L(1)$.
	\end{enumerate}
\end{thm}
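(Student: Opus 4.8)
The plan is to reuse, almost verbatim, the argument of Theorem~\ref{thm:taubPi}, changing only the kernel so that it reproduces the integral in $(iii)$. First note that the equivalence $(i)\Leftrightarrow(ii)$ is nothing but the $p=0$ case of Theorem~\ref{thm:taubPi}, which is already established. Hence the whole task reduces to proving $(ii)\Leftrightarrow(iii)$, and for this I would again set $f(x):=\int_{[0,x]} t\,\mu(\dd t)$, which is nondecreasing and finite for every $x$, hence locally bounded on $[0,+\infty)$ and monotonic.

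The next step is to realize the function in $(iii)$ as a Mellin convolution of $f$ against the kernel $k(x):=x/(1+x)^2$. Since $k(x/t)/t = x/(t+x)^2$, Tonelli's theorem gives
\begin{align*}
k\stackrel{M}{\ast}f(x)&=\int_0^\infty \frac{x}{(t+x)^2} f(t)\,\dd t \\
&= \int_{[0,+\infty)} s\left(\int_s^\infty \frac{x}{(t+x)^2}\,\dd t\right)\mu(\dd s) = x\int_{[0,+\infty)} \frac{s}{s+x}\,\mu(\dd s),
\end{align*}
where the last equality uses $\int_s^\infty x(t+x)^{-2}\,\dd t = x/(s+x)$. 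Thus $(iii)$ is precisely the assertion $k\stackrel{M}{\ast}f\in\Pi_L(1)$, and I have reduced matters to checking that $k$ meets the hypotheses of Theorem~\ref{thm:tauber}.

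To that end I would compute the Mellin transform by a Beta-integral: for $-1<\Re z<1$,
\[
\check{k}(z)=\int_0^\infty \frac{t^{-z}}{(1+t)^2}\,\dd t = \Gamma(1-z)\Gamma(1+z)=\frac{\pi z}{\sin(\pi z)}.
\]
This shows $k$ is nonnegative with $\check{k}$ absolutely convergent in any strip $|\Re z|\leq\eta$ for $\eta<1$, and $\check{k}(0)=1$. On the imaginary axis $\check{k}(\mathrm{i}y)=\pi y/\sinh(\pi y)>0$, so $\check{k}(z)\neq0$ for $\Re z=0$. All the assumptions of Theorem~\ref{thm:tauber}$(ii)$ are therefore in force: because $\check{k}(0)=1$, the implication $f\in\Pi_L(1)\Rightarrow k\stackrel{M}{\ast}f\in\Pi_L(1)$ holds, and because $f$ is monotonic the converse implication holds too. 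This gives $(ii)\Leftrightarrow(iii)$ and finishes the proof.

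The only genuinely non-routine point is identifying the correct kernel $k(x)=x/(1+x)^2$, the one whose Mellin convolution against $f$ returns $x\int \tfrac{t}{t+x}\,\mu(\dd t)$; once it is found, verifying its convergence strip, its value $\check{k}(0)=1$, and its nonvanishing on the imaginary axis is a direct computation, and the rest is inherited from the general Tauberian Theorem~\ref{thm:tauber} exactly as in Theorem~\ref{thm:taubPi}.
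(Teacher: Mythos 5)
Your proposal is correct and follows essentially the same route as the paper: reduce $(i)\Leftrightarrow(ii)$ to Theorem~\ref{thm:taubPi} with $p=0$, identify the integral in $(iii)$ as $k\stackrel{M}{\ast}f$ for $k(x)=x/(1+x)^2$ and $f(x)=\int_{[0,x]}t\,\mu(\dd t)$, and invoke Theorem~\ref{thm:tauber}$(ii)$ with the monotonicity of $f$ supplying the Tauberian condition. The only cosmetic differences are that you verify the kernel hypotheses via the explicit Beta-integral $\check{k}(z)=\pi z/\sin(\pi z)$ (the paper merely asserts them) and use Tonelli in place of integration by parts.
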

\begin{proof}
	Equivalence between $(i)$ and $(ii)$ follows from Theorem \ref{thm:taubPi} with $p=0$.
	
	Define $f(x)=\int_{[0,x]}t\,\mu(\dd t)$ and $k(x)=x/(1+x)^2$. We have
	\[
	x\int_{[0,+\infty)} \frac{ t}{t+x}\mu(\dd t) =\int_{[0,+\infty)}\frac{x}{x+t}\dd f(t)=\int_0^\infty \frac{x}{(x+t)^2}f(t)\dd t = k\stackrel{M}{\ast}f(x).
	\]
	The Mellin transform $\check{k}$ of $k$ is absolutely convergent for $|\Re z|<1$ and $\check{k}(z)\neq 0$ for $\Re z=0$. Moreover, $\check{k}(0)=\int_0^\infty (1+t)^{-2}\dd t=1$.  Since $f$ is nondecreasing, by Theorem  \ref{thm:tauber} $(ii)$, we see that
	\[
	f\in\Pi_L(1)\qquad \mbox{if and only if}\qquad  k\stackrel{M}{\ast}f\in \Pi_L(1),
	\]
	which proves equivalence between $(ii)$ and $(iii)$.
\end{proof}

\begin{lemma}\label{lem:simpleT}\ 
	\begin{enumerate}[(i)]
		\item If  $f$ is positive and $f(x)\sim g(x)$ then, $k_p\stackrel{M}{\ast} f(x)\sim k_p\stackrel{M}{\ast} g(x)$.
		\item Let $L$ be a slowly varying function and assume that $f(x)=g(x)+c_0+o(x^{-\eps})$  for some $\eps>0$ and $c_0\in\R$. Then, $g\in \Pi_L(c)$ if and only if $f\in\Pi_L(c)$.
		\item Let $L$ be a slowly varying function and assume that $f(x)=g(x)(c_0+o(x^{-\eps}))$ for some $\eps>0$ and $c_0\in\R$. Then, $g\in \Pi_L(c)$ if and only if $f\in \Pi_L(c_0 c)$.
	\end{enumerate}
	\noindent	
\end{lemma}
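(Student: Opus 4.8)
The three parts are essentially independent, so the plan is to treat each in turn, reducing it to an elementary estimate built on the structure of the objects involved.

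For (i) I would first make the Mellin convolution explicit. Since $k_p(x/t)=(x/t)^{p+1}$ exactly when $t\ge x$ and vanishes otherwise,
\[
k_p\stackrel{M}{\ast}f(x)=x^{p+1}\int_x^\infty \frac{f(t)}{t^{p+2}}\,\dd t,
\]
and similarly for $g$. Because $f>0$ and $f\sim g$, the function $g$ is positive for large arguments, and for any $\eta>0$ there is $T$ with $(1-\eta)g(t)\le f(t)\le (1+\eta)g(t)$ for $t>T$. For $x>T$ the whole domain of integration lies in $\{t>T\}$, so the nonnegativity of the kernel gives
\[
\bigl| k_p\stackrel{M}{\ast}f(x)-k_p\stackrel{M}{\ast}g(x)\bigr|\le \eta\, k_p\stackrel{M}{\ast}g(x);
\]
dividing and letting $\eta\downarrow0$ yields $k_p\stackrel{M}{\ast}f(x)\sim k_p\stackrel{M}{\ast}g(x)$. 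Finiteness of $k_p\stackrel{M}{\ast}g(x)$ for large $x$ follows from that of $k_p\stackrel{M}{\ast}f(x)$ by the same comparison.

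For (ii) I would substitute the hypothesis straight into the definition of $\Pi_L(c)$. Writing $f=g+c_0+r$ with $r(x)=o(x^{-\eps})$, the additive constant cancels in the increment, so
\[
\frac{f(\lambda x)-f(x)}{L(x)}-\frac{g(\lambda x)-g(x)}{L(x)}=\frac{r(\lambda x)-r(x)}{L(x)}.
\]
The one fact needed is the slowly varying domination $x^{-\eps}=o(L(x))$, which holds because $x^{\eps/2}L(x)\to+\infty$ for $L\in\mathcal{R}_0$; since $r(\lambda x)=o(x^{-\eps})$ for fixed $\lambda$, the right-hand side tends to $0$. Hence the two difference quotients share the same limiting behavior, so $f\in\Pi_L(c)$ if and only if $g\in\Pi_L(c)$.

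Part (iii) is the one I expect to require the most care. Writing $f(x)=g(x)(c_0+h(x))$ with $h(x)=o(x^{-\eps})$, I expand
\[
\frac{f(\lambda x)-f(x)}{L(x)}=c_0\,\frac{g(\lambda x)-g(x)}{L(x)}+\frac{g(\lambda x)h(\lambda x)-g(x)h(x)}{L(x)}.
\]
If $g\in\Pi_L(c)$ the first term converges to $c_0 c\log\lambda$, so everything hinges on showing the second term vanishes. The obstacle is that $g$ itself may grow, but here I would invoke that $g\in\Pi_L(c)\subset\mathcal{R}_0$ for $c\ne0$ (applying this to $-g$ when $c<0$), whence $|g(x)|=o(x^{\eps/2})$; combined with $h(x)=o(x^{-\eps})$ this gives $g(x)h(x)=o(x^{-\eps/2})$, and the same domination $x^{-\eps/2}=o(L(x))$ forces the second term to $0$. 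This proves $g\in\Pi_L(c)\Rightarrow f\in\Pi_L(c_0c)$. For the converse, assuming $c_0\ne0$, I would invert the relation as $g(x)=f(x)(1/c_0+\tilde h(x))$ with $\tilde h(x)=o(x^{-\eps})$ (since $1/(c_0+h(x))=1/c_0+o(x^{-\eps})$) and apply the forward implication with $f$ and $g$ interchanged and $c_0$ replaced by $1/c_0$, turning $f\in\Pi_L(c_0c)$ into $g\in\Pi_L(c)$.
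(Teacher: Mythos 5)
Your proof is correct. Part (i) is essentially identical to the paper's argument: both write $k_p\stackrel{M}{\ast}f(x)=x^{p+1}\int_x^\infty f(t)t^{-p-2}\,\dd t$ and sandwich $f$ between $(1\mp\eta)g$ using positivity of the kernel. For (ii) and (iii) you take a genuinely more hands-on route: you plug the hypotheses directly into the definition of $\Pi_L(c)$ and kill the error terms using $x^{-\eps}=o(L(x))$, whereas the paper instead invokes the representation theorem for the class $\Pi_L(c)$ (Theorem \ref{thm:PiL}~$(ii)$), absorbing the perturbations $c_0+o(x^{-\eps})$ into the constant $C$ and the term $d(1+o(1))L(x)$ of the representation. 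Your version is more elementary and self-contained for (ii); for (iii) it needs the extra input $\Pi_L(c)\subset\mathcal{R}_0$ (valid for $c\neq0$, via Remark \ref{rem:PiInv}) to control $g(x)h(x)$, which you supply correctly. The one place where the paper's route buys slightly more is the degenerate case $c=0$ in (iii): the representation of Theorem \ref{thm:PiL} still yields $g(x)=o(x^{\eps})$ for every $\eps>0$ even when $c=0$, whereas your appeal to $\Pi_L(c)\subset\mathcal{R}_0$ breaks down there. Since every application of the lemma in the paper has $c\neq0$, and since you correctly flag that the converse in (iii) requires $c_0\neq0$ (a restriction the paper's statement also leaves implicit), this is a cosmetic rather than substantive difference.
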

\begin{proof}

		For $(i)$ fix $\eps>0$ and take $x_0>0$ such that $1-\eps\leq f(x)/g(x)\leq 1+\eps$ for $x>x_0$. Then, for $x>x_0$ we have
		\begin{multline*}
			(1-\eps)k_p \stackrel{M}{\ast} g(x) =x^{p+1}\int_x^\infty \frac{(1-\eps)g(t)}{t^{p+2}}\dd t \\
			\leq 
			x^{p+1}\int_x^\infty \frac{f(t)}{t^{p+2}}\dd t = 	k_p \stackrel{M}{\ast} f(x)
			\leq (1+\eps)	k_p \stackrel{M}{\ast} g(x).
		\end{multline*}
		
		For $(ii)$ and $(iii)$ we use the fact that for $\ell\in\mathcal{R}_0$ we have $\ell(x)=o(x^{\eps})$ for all $\eps>0$ and the result follows from point $(ii)$ of Theorem \ref{thm:PiL}.
\end{proof}

\section{S-transform}\label{sec:3}

In this section we study in detail analytic properties of Voiculescu's $S$-transform, seen as a function of a real argument. We put into evidence some facts about the expansion of the $S$-transform as the real argument $z\to 0^-$. While this section is mostly preliminary, it also contains a fair amount of new observations about the $S$-transform, which might be of independent interest.
Throughout this section we assume that $\mu\in\mathcal{M}_+$ and $\mu\neq\delta_0$. We shall also denote $\delta=\mu(\{0\})<1$.

\begin{defin}
	Suppose $\mu\in\mathcal{M}_+$ and assume
	that $\delta=\mu(\{0\})< 1$. The so-called moment transform of $\mu$ is defined as 
	\[
	\psi_\mu(z)= \int_{[0,+\infty)} \frac{z t}{1-z t}\mu(\dd t).
	\]
	The function $\psi_\mu\colon (-\infty,0)\to (\delta-1,0)$ is invertible (see \cite{BV93}) and we denote its inverse by $\chi_\mu$. 
	
	The $S$-transform of $\mu$ is defined by  
	\[
	S_\mu(z) = \frac{z+1}{z} \chi_\mu(z),\qquad z\in(\delta-1,0).
	\] 
\end{defin}
\begin{remark}
		Usually $S_\mu$ is defined for complex argument. Function $\psi_\mu$ is well defined for $z\in \mathbb{C}\setminus\R_+$ and it is univalent in the left-plane $i\mathbb{C}_+$. Then, $\chi_\mu\colon \psi_\mu(i\mathbb{C}_+)\to \mathbb{C}_+$ is the inverse of $\psi_\mu$ and $S_\mu(z) = (1+z^{-1}) \chi_\mu(z)$ for $z\in \psi_\mu(i\mathbb{C}_+)$. However, it is enough for us to work only with real functions.
		
		It is also worth noticing that the $S$-transform determines uniquely the probability measure, as it determines the moment transform $\psi$.
\end{remark}

In the proposition below we review some properties of the $S$-transform, which are relevant for us.
\begin{proposition}\label{thm:useful}\ 
	\begin{enumerate}[(i)]
		\item \cite[Proposition 6.1, 6.3]{BV93} $\psi_\mu$ and $\chi_\mu$ are are analytic in $(-\infty,0)$, respectively, $(\delta-1,0)$, hence $S_\mu$ is analytic in $(\delta -1,0)$.
		\item \cite[Proposition 6.8]{BV93} $S_\mu$ is decreasing on $(\delta-1,0)$ and positive. 
		\item \cite[Lemma 4]{HaM13} $S_\mu\big((\delta-1,0)\big) = \left(m_1^{-1}(\mu), m_{-1}(\mu) \right)$.
		\item \cite[Proposition 6.6]{BV93} Let $\mu,\nu\in\mathcal{M}_+$, none of them being $\delta_0$. Then we have $S_{\mu\boxtimes \nu}=S_\mu S_\nu$.
		\item \cite[Proposition 3.13]{HS07} If $\mu(\{0\})=0$, then $S_{\hat{\mu}}(z)=1/S_\mu(-1-z)$, $z\in(-1,0)$, where $\hat{\mu}$ is the pushforward measure of $\mu$ by the mapping $x\mapsto x^{-1}$.
	\end{enumerate}
\end{proposition}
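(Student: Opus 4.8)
The plan is to take item (iv) as external input and to reconstruct the other four items by elementary real analysis directly from the definition $\psi_\mu(z)=\int_{[0,+\infty)}\frac{zt}{1-zt}\mu(\dd t)$ on $(-\infty,0)$. Item (iv) is Voiculescu's multiplicativity of the $S$-transform, which encodes all of free multiplicative convolution; since we do not reintroduce freeness, I would simply quote it from \cite{BV93}. For (i), observe that for $z<0$, $t\geq0$ one has $1-zt\geq1$, so the integrand is bounded by $1$ and extends holomorphically to a complex neighbourhood of each point of $(-\infty,0)$ (the only singularities, at $z=1/t$ with $t>0$, lie in $(0,+\infty)$). Differentiating under the integral sign gives $\psi_\mu'(z)=\int_{[0,+\infty)}\frac{t}{(1-zt)^2}\mu(\dd t)>0$, so $\psi_\mu$ is analytic and strictly increasing; dominated convergence yields $\psi_\mu(0^-)=0$ and $\psi_\mu(-\infty)=-\mu\big((0,+\infty)\big)=\delta-1$, so $\psi_\mu$ is a bijection of $(-\infty,0)$ onto $(\delta-1,0)$. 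The inverse function theorem makes $\chi_\mu$ analytic, and then $S_\mu(z)=\frac{z+1}{z}\chi_\mu(z)$ is analytic on $(\delta-1,0)$.

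For (ii), positivity is a sign count: for $z\in(\delta-1,0)\subset(-1,0)$ we have $\frac{z+1}{z}<0$, while $\chi_\mu(z)\in(-\infty,0)$ by the range found above, so $S_\mu(z)>0$. That $S_\mu$ is decreasing is the one genuinely technical point; writing $S_\mu=\eta\circ\chi_\mu$ with $\eta(u)=u\,\frac{\psi_\mu(u)+1}{\psi_\mu(u)}$ and $\chi_\mu$ increasing, it suffices to show $\eta$ is decreasing. I expect this to be the main obstacle, and rather than grind through the sign of $\eta'$ I would invoke \cite[Proposition 6.8]{BV93}.

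For (iii), the endpoints of the range follow from the asymptotics of $\psi_\mu$. Near $0$, $\psi_\mu(u)=m_1(\mu)\,u+o(u)$, whence $\chi_\mu(z)\sim z/m_1(\mu)$ and $S_\mu(z)=\frac{z+1}{z}\chi_\mu(z)\to m_1(\mu)^{-1}$ as $z\to0^-$. Near $-\infty$, the identity $\frac{ut}{1-ut}+1=\frac{1}{1-ut}$ gives $\psi_\mu(u)-(\delta-1)=\int_{(0,+\infty)}\frac{1}{1-ut}\mu(\dd t)\sim -m_{-1}(\mu)/u$, so $\chi_\mu(z)\sim -m_{-1}(\mu)/(z-\delta+1)$ and $S_\mu(z)\to m_{-1}(\mu)$ as $z\to(\delta-1)^+$ (both limits read as $+\infty$ or $0$ when the relevant moment is infinite). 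Monotonicity and continuity from (i)--(ii) then identify $S_\mu\big((\delta-1,0)\big)$ with the open interval $\big(m_1^{-1}(\mu),m_{-1}(\mu)\big)$.

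For (v), I would first establish the functional equation $\psi_{\hat\mu}(z)=-1-\psi_\mu(1/z)$: the change of variables $t\mapsto 1/t$ gives $\psi_{\hat\mu}(z)=\int\frac{z}{t-z}\mu(\dd t)$ and $\psi_\mu(1/z)=\int\frac{t}{z-t}\mu(\dd t)$, whose sum is $\int\frac{t-z}{z-t}\mu(\dd t)=-1$. Inverting this relation yields $\chi_{\hat\mu}(y)=1/\chi_\mu(-1-y)$, and substituting into the definition of $S$ gives $S_{\hat\mu}(y)=\frac{y+1}{y}\chi_{\hat\mu}(y)=\big(\tfrac{y}{y+1}\chi_\mu(-1-y)\big)^{-1}=S_\mu(-1-y)^{-1}$, as claimed; here $\mu(\{0\})=0$ guarantees $\hat\mu(\{0\})=0$, so that both $S$-transforms are defined on $(-1,0)$.
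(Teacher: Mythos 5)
The paper offers no proof of this proposition at all: every item is simply quoted from the literature (\cite{BV93}, \cite{HaM13}, \cite{HS07}), so any comparison is between your reconstruction and the cited sources rather than an argument in the text. Your reconstruction is correct and self-contained for (i), (iii), (v) and the positivity half of (ii): the monotone bijection $\psi_\mu\colon(-\infty,0)\to(\delta-1,0)$, the endpoint asymptotics $\psi_\mu(u)\sim m_1(\mu)u$ at $0^-$ and $u\big(\psi_\mu(u)-(\delta-1)\big)\to-\int_{(0,+\infty)}t^{-1}\mu(\dd t)$ at $-\infty$, and the functional equation $\psi_{\hat\mu}(z)=-1-\psi_\mu(1/z)$ all check out, and the last of these is exactly the mechanism behind the cited \cite[Proposition 3.13]{HS07}. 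What your approach buys is independence from the complex-analytic setup of \cite{BV93}, at the cost of leaving two genuine inputs unproved: multiplicativity (iv), which is unavoidable without freeness, and the monotonicity of $\eta(u)=u\big(\psi_\mu(u)+1\big)/\psi_\mu(u)$, which you correctly flag as the only nontrivial analytic point and defer to \cite[Proposition 6.8]{BV93}. One small imprecision worth fixing: in (iii) you write $\psi_\mu(u)-(\delta-1)\sim -m_{-1}(\mu)/u$, but the constant produced by monotone convergence is $\int_{(0,+\infty)}t^{-1}\mu(\dd t)$, which equals $m_{-1}(\mu)$ only when $\delta=0$; when $\delta>0$ one has $m_{-1}(\mu)=+\infty$ while that integral may be finite, and the limit $S_\mu(z)\to+\infty$ as $z\to(\delta-1)^+$ then comes from $\chi_\mu(z)\to-\infty$ multiplied by the nonzero prefactor $\delta/(\delta-1)$, not from the asymptotic constant itself. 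The conclusion is unaffected, but the intermediate statement should be phrased with the integral over $(0,+\infty)$.
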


We record for further reference a simple fact about the derivatives of the moment transform. Let as note that all derivatives of $\psi_\mu$ are monotonic.
\begin{lemma}\label{lem:psi1}
	Let $p\in\mathbb{N}\cup\{0\}$. For $z<0$ we have
	\[
	\psi_\mu^{(p+1)}(z)=(p+1)! \int_{[0,+\infty)} \frac{t^{p+1}}{(1-z t)^{p+2}}\mu(\dd t).
	\]
\end{lemma}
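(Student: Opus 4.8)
The plan is to obtain the formula by repeated differentiation under the integral sign, the only point requiring real care being the justification of that interchange. First I would rewrite the integrand in a form convenient for differentiation: since $\frac{zt}{1-zt} = \frac{1}{1-zt}-1$ and $\mu$ is a probability measure, for $z<0$ we have
\[
\psi_\mu(z) = \int_{[0,+\infty)} \frac{1}{1-zt}\,\mu(\dd t) - 1.
\]
A direct induction (or the chain rule applied repeatedly) gives, for the kernel $g(z,t) = (1-zt)^{-1}$ and any integer $k\geq 0$,
\[
\frac{\partial^k}{\partial z^k}\, g(z,t) = \frac{k!\, t^k}{(1-zt)^{k+1}}.
\]
Granting that $\psi_\mu$ may be differentiated under the integral $p+1$ times, the claimed identity follows immediately upon taking $k=p+1$.

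The substantive step is therefore to legitimize differentiation under the integral on $(-\infty,0)$, which I would do locally on compacta. Fix an arbitrary compact interval $[a,b]\subset(-\infty,0)$ and set $\eta:=-b>0$, so that $-z\geq \eta$, and hence $1-zt = 1+(-z)t\geq 1+\eta t$, for all $z\in[a,b]$ and $t\geq 0$. The key estimate is that for each $k\in\{0,\ldots,p\}$, $z\in[a,b]$ and $t\geq 0$,
\[
\frac{t^{k+1}}{(1-zt)^{k+2}} \leq \frac{t^{k+1}}{(1+\eta t)^{k+2}} \leq \sup_{s\geq 0}\frac{s^{k+1}}{(1+\eta s)^{k+2}} =: C_k < +\infty,
\]
the finiteness of the supremum being clear because $s\mapsto s^{k+1}/(1+\eta s)^{k+2}$ is continuous on $[0,+\infty)$ and vanishes both at $s=0$ and as $s\to+\infty$ (the denominator grows one power faster than the numerator). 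Consequently, on $[a,b]$ the $(k+1)$-th $z$-derivative of the kernel is dominated by the constant $(k+1)!\,C_k$, which is $\mu$-integrable since $\mu$ is a finite measure. This uniform domination on compacta is exactly the hypothesis of the standard criterion for differentiating a parameter integral, so it may be applied successively for $k=0,1,\dots,p$, and the induction closes, yielding the stated expression for $\psi_\mu^{(p+1)}$.

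I do not anticipate any genuine obstacle; the only thing to watch is that the factor $t^{p+1}$ in the numerator not destroy integrability, and this is precisely neutralized by the extra power in the denominator $(1-zt)^{p+2}$, which keeps $t^{p+1}(1-zt)^{-(p+2)}$ bounded for large $t$. As a byproduct, the monotonicity of all derivatives noted before the statement is read off directly from the formula: for fixed $t>0$ the integrand $t^{p+1}(1-zt)^{-(p+2)}$ is positive and increasing in $z$ on $(-\infty,0)$, whence $\psi_\mu^{(p+1)}$ is itself positive and increasing.
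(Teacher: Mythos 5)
Your proof is correct: the algebraic reduction $\frac{zt}{1-zt}=\frac{1}{1-zt}-1$, the formula for the $k$-th derivative of the kernel, and the uniform domination on compact subsets of $(-\infty,0)$ by the finite constant $C_k$ (integrable because $\mu$ is a probability measure) together give exactly the standard justification for differentiating under the integral sign. The paper states this lemma without proof as a routine fact, and your argument is precisely the verification the authors had in mind, including the observation that the resulting integrand is increasing in $z$, which yields the monotonicity of the derivatives noted just before the statement.
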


In the lemma below we find a useful formula for the $p$th derivative of the $S$-transform.
\begin{lemma}\label{lem:RepS}
	Let $p\in \mathbb{N}\cup\{0\}$. For $z\in(\delta-1,0)$ we have
	\begin{align*}
		S_\mu^{(p)}(z) & = \chi_\mu^{(p)}(z)+ \int_z^0 \frac{(-t)^{p}}{(-z)^{p+1}} \chi_\mu^{(p+1)}(t)\dd t.
	\end{align*}
\end{lemma}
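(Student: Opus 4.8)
The plan is to reduce everything to the $p$-th derivative of $z\mapsto \chi_\mu(z)/z$ and then to recognize the claimed integral as its integrated form. Writing $S_\mu(z)=\frac{z+1}{z}\chi_\mu(z)=\chi_\mu(z)+\frac{\chi_\mu(z)}{z}$, the Leibniz rule gives
\[
S_\mu^{(p)}(z)=\chi_\mu^{(p)}(z)+\frac{\dd^p}{\dd z^p}\Big(\frac{\chi_\mu(z)}{z}\Big),
\]
so it suffices to identify the last term with $\int_z^0\frac{(-t)^p}{(-z)^{p+1}}\chi_\mu^{(p+1)}(t)\,\dd t$. A transparent starting point is the identity $\chi_\mu(z)/z=\int_0^1\chi_\mu'(uz)\,\dd u$, which is valid because $\chi_\mu(0^-)=0$ (indeed $\psi_\mu(0^-)=0$ and $\chi_\mu=\psi_\mu^{\langle-1\rangle}$); differentiating $p$ times in $z$ and substituting $t=uz$ produces exactly the desired right-hand side.

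I would organize the actual argument as an induction on $p$, since this exposes the one delicate point most cleanly. The base case $p=0$ is the computation $\chi_\mu(z)+\int_z^0(-z)^{-1}\chi_\mu'(t)\,\dd t=\chi_\mu(z)-\tfrac1z\big(\chi_\mu(0^-)-\chi_\mu(z)\big)=\frac{z+1}{z}\chi_\mu(z)$, using $\chi_\mu(0^-)=0$ and integrability of $\chi_\mu'$ near $0$ (which holds since $\chi_\mu$ is monotone with finite limit at $0^-$). For the inductive step I differentiate the formula for $S_\mu^{(p)}$: the summand $\chi_\mu^{(p)}$ yields $\chi_\mu^{(p+1)}$, while differentiating the prefactor $(-z)^{-(p+1)}$ together with the lower limit $z$ of the integral (Leibniz integral rule) gives two further terms. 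Matching the outcome with the target formula for $S_\mu^{(p+1)}$ reduces precisely to the integration-by-parts identity
\[
\int_z^0(-t)^{p+1}\chi_\mu^{(p+2)}(t)\,\dd t=(p+1)\int_z^0(-t)^p\chi_\mu^{(p+1)}(t)\,\dd t-(-z)^{p+1}\chi_\mu^{(p+1)}(z),
\]
whose validity hinges on the vanishing of the boundary contribution at $0$.

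The main obstacle is therefore the single analytic fact that $\lim_{t\to0^-}(-t)^{k}\chi_\mu^{(k)}(t)=0$ for every $k\ge1$ (and, relatedly, that the integrals converge at $0^-$). This is genuinely nontrivial, because $\chi_\mu^{(k)}$ need \emph{not} stay bounded as $t\to0^-$: for heavy tails $\chi_\mu^{(k)}(t)$ blows up like a negative power of $-t$, so the statement asserts that the factor $(-t)^k$ exactly dominates that blow-up. To prove it I would pass to the $\psi_\mu$-variable via $t=\psi_\mu(u)$, $u=\chi_\mu(t)\to0^-$, and expand $\chi_\mu^{(k)}(\psi_\mu(u))$ by the inverse-function (Faà di Bruno) formula as a finite sum of terms $\psi_\mu^{(a_1)}(u)\cdots\psi_\mu^{(a_m)}(u)/\psi_\mu'(u)^{2k-1}$ with $a_i\ge1$ and $\sum_i(a_i-1)=k-1$. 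Using the explicit positive representations of $\psi_\mu^{(j)}$ from Lemma \ref{lem:psi1}, the log-convexity (Cauchy--Schwarz) inequalities $\big(\int_{[0,+\infty)} t^j(1-ut)^{-j-1}\mu(\dd t)\big)^2\le\int_{[0,+\infty)} t^{j-1}(1-ut)^{-j}\mu(\dd t)\cdot\int_{[0,+\infty)} t^{j+1}(1-ut)^{-j-2}\mu(\dd t)$ among these integrals, and the elementary limits $-\psi_\mu(u)\to0$ and $\psi_\mu'(u)\to m_1(\mu)\in(0,+\infty]$, one bounds each summand of $(-\psi_\mu(u))^k\chi_\mu^{(k)}(\psi_\mu(u))$ and verifies it tends to $0$. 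The base estimate driving the induction is $-\psi_\mu(u)/\psi_\mu'(u)\to0$, which is immediate since the numerator tends to $0$ while the denominator has a positive (possibly infinite) limit. Once this vanishing is in place the boundary terms drop out, the induction closes, and the lemma follows.
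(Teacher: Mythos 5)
Your proof follows essentially the same route as the paper: the same decomposition $S_\mu=\chi_\mu+\chi_\mu/z$, the same induction on $p$ with the base case resting on $\chi_\mu(0^-)=0$, and the same integration by parts in the inductive step. The one difference is that you explicitly isolate and justify the vanishing of the boundary term $(-t)^{k}\chi_\mu^{(k)}(t)\to 0$ as $t\to 0^-$ (via the inverse-function formula and log-convexity of the integrals in Lemma \ref{lem:psi1}), a point the paper's proof passes over silently; this is a legitimate refinement rather than a different argument.
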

\begin{proof}Recall that $S_\mu(z) = \chi_\mu(z)+ z^{-1}\chi_\mu(z)$, so it is enough to show that 
	\[
	\frac{\dd^p}{\dd z^p} \frac{\chi_\mu(z)}{z}= \int_z^0 \frac{(-t)^{p}}{(-z)^{p+1}} \chi_\mu^{(p+1)}(t)\dd t.
	\]
	We proceed by induction. For $p=0$, the right hand side above equals
	\[
	\frac{1}{-z}\int_z^0  \chi_\mu^\prime(t)\dd t = -\frac{1}{z} \left(\chi_\mu(0^-)-\chi_\mu(z)\right)= \frac{\chi_\mu(z)}{z},
	\]
	since $\chi_\mu(0^-)=0$. By the induction hypothesis we have
	\begin{align*}
		\frac{\dd^{p+1}}{\dd z^{p+1}} \frac{\chi_\mu(z)}{z}&=
		\frac{\dd}{\dd z} \frac{\dd^{p}}{\dd z^{p}} \frac{\chi_\mu(z)}{z} = \frac{\dd}{\dd z} 
		\frac{ \int_z^0 (-t)^{p} \chi_\mu^{(p+1)}(t)\dd t}{(-z)^{p+1}}\\
		& = \frac{ -  (-z)^{p} \chi_\mu^{(p+1)}(z)}{(-z)^{p+1}}+(p+1)\frac{ \int_z^0 (-t)^{p} \chi_\mu^{(p+1)}(t)\dd t}{(-z)^{p+2}}\\
		& = \int_z^0 \frac{(-t)^{p+1}}{(-z)^{p+2}} \chi_\mu^{(p+2)}(t)\dd t,
	\end{align*}
	where the last equality follows from integration by parts.
\end{proof}

\begin{remark}
	If $\delta=0$ and $m_{-1}(\mu)<+\infty$, then one can show that for $z\in(-1,0)$,
	\[
	\chi_\mu^{(p)}(z)=S_\mu^{(p)}(z)+ \frac{(-1)^{p+1}p!\, m_{-1}(\mu)}{(1+z)^{p+1}} -\int_{-1}^z \frac{(1+t)^p}{(1+z)^{p+1}} S_\mu^{(p+1)}(t)\dd t.
	\]
	Indeed, for arbitrary $\mu\in\mathcal{M}_+$ we  have
	\begin{align}
		\chi^{(p)}_\mu(z) & = S_\mu^{(p)}(z)-\frac{\dd^p}{\dd z^p} \left(S_\mu(z) \frac{1}{1+z}\right) =  S_\mu^{(p)}(z)-\sum_{n=0}^p \binom{p}{n} S_\mu^{(n)}(z) \frac{\dd^{p-n}}{\dd z^{p-n}}  \frac{1}{1+z} \nonumber\\
		& = S_\mu^{(p)}(z)-\sum_{n=0}^p \binom{p}{n} S_\mu^{(n)}(z)  (-1)^{p-n}\frac{(p-n)!}{(1+z)^{p-n+1}} \nonumber\\
		&=  S_\mu^{(p)}(z)+p! (-1-z)^{-p-1} \sum_{n=0}^p \frac{S_\mu^{(n)}(z)}{n!} (-1-z)^n. \label{eq:chip}
	\end{align}
	But if $\mu(\{0\})=0$ and  $S_\mu(-1^+)=m_{-1}(\mu)<+\infty$, then we have a one-sided Taylor expansion  with integral form of the remainder.
	\[
	S_\mu(-1^+)= \sum_{n=0}^p \frac{S_\mu^{(n)}(z)}{n!} (-1-z)^n + \frac{1}{p!}\int_z^{-1} (-1-t)^p S_\mu^{(p+1)}(t)\dd t.
	\]
\end{remark}

From the Lemma \ref{lem:RepS} we find another representation of the $p$th derivative of the $S$-transform. This representation allows us to apply standard Tauberian theorems. The kernel $k_p$ was defined in \eqref{def:kp}.
\begin{Corollary}\label{cor:RepS}
	For $x>(1-\delta)^{-1}$ we have
	\[
	S_\mu^{(p)}\left(-\frac1x\right)= \chi_\mu^{(p)}\left(-\frac1x\right)+k_p\stackrel{M}{\ast} \widetilde{\chi}_{p+1}(x),
	\]
	where $\widetilde{\chi}_{p+1}(x):=\chi_\mu^{(p+1)}(-1/x)$.
\end{Corollary}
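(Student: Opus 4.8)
The plan is to read off the identity directly from Lemma \ref{lem:RepS} by evaluating at $z=-1/x$ and then recognizing the remaining integral as the Mellin convolution $k_p\stackrel{M}{\ast}\widetilde{\chi}_{p+1}$. Before doing so I would note that the hypothesis $x>(1-\delta)^{-1}$ is precisely what places $z=-1/x$ in the domain $(\delta-1,0)$ on which $\chi_\mu$ is analytic and on which Lemma \ref{lem:RepS} applies: the condition $x>(1-\delta)^{-1}$ is equivalent to $1/x<1-\delta$, i.e. $-1/x>\delta-1$, while trivially $-1/x<0$.

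Substituting $z=-1/x$ into Lemma \ref{lem:RepS} and using $(-z)^{p+1}=x^{-(p+1)}$, so that $1/(-z)^{p+1}=x^{p+1}$, I would obtain
\[
S_\mu^{(p)}\left(-\frac1x\right)=\chi_\mu^{(p)}\left(-\frac1x\right)+x^{p+1}\int_{-1/x}^0 (-s)^p\,\chi_\mu^{(p+1)}(s)\,\dd s.
\]
The core computation is the change of variables $s=-1/u$ in the remaining integral. Under this substitution $u$ runs over $(x,+\infty)$ as $s$ runs over $(-1/x,0)$, and $\dd s=u^{-2}\,\dd u$, $(-s)^p=u^{-p}$, so the integral becomes
\[
x^{p+1}\int_x^\infty u^{-p-2}\,\chi_\mu^{(p+1)}\left(-\frac1u\right)\dd u.
\]

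It then remains to match this against the Mellin convolution. By the definition of $k_p$ in \eqref{def:kp} we have $k_p(x/u)=(x/u)^{p+1}I_{[0,1]}(x/u)$, and for $u>0$ the indicator $I_{[0,1]}(x/u)$ equals $1$ exactly when $u\geq x$; hence
\[
k_p\stackrel{M}{\ast}\widetilde{\chi}_{p+1}(x)=\int_0^\infty k_p\left(\frac xu\right)\widetilde{\chi}_{p+1}(u)\frac{\dd u}{u}=x^{p+1}\int_x^\infty u^{-p-2}\,\chi_\mu^{(p+1)}\left(-\frac1u\right)\dd u,
\]
which is exactly the integral obtained above, completing the identification. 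There is no serious obstacle in this argument; the only points requiring care are the bookkeeping of exponents under the change of variables and the observation that the indicator in $k_p$ is what turns the lower limit of the Mellin integral into $x$. Convergence of the integral near $s=0^-$ (equivalently near $u=+\infty$) is inherited from Lemma \ref{lem:RepS} and needs no separate verification.
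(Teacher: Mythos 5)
Your proof is correct and follows exactly the route the paper intends: the corollary is stated as an immediate consequence of Lemma \ref{lem:RepS}, obtained by setting $z=-1/x$ and rewriting the integral via the substitution $t=-1/u$ as the Mellin convolution with $k_p$. Your bookkeeping of the exponents, the role of the indicator in $k_p$, and the domain condition $x>(1-\delta)^{-1}$ are all accurate.
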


In the next lemma we observe what an expansion of any of the three functions $(\psi_\mu,\chi_\mu,S_\mu)$ says about expansion of the other two functions.
\begin{lemma}\label{lem:ConvergenceS}
	
	Assume that $p\in\mathbb{N}$ and $\mu\in\mathcal{M}_+$.	Fix $\eps\in[0,1)$.
	The following three conditions are equivalent
	\begin{align}\label{eq:psiseries}
		\psi_\mu(z) = \sum_{n=1}^p m_n(\mu) z^n+o(z^{p+\eps}),\qquad \mbox{as }z\to0^-,
	\end{align}	
	\begin{align}\label{eq:chiseries}
		\chi_\mu(z) = \sum_{n=1}^p c_n z^n + o(z^{p+\eps}),\qquad \mbox{as }z\to 0^-
	\end{align}
	for some real coefficients $(c_n)_n$,
	\begin{align}\label{eq:Sseries}
		S_\mu(z) = \sum_{n=0}^{p-1} s_n z^n + o(z^{p-1+\eps}),\qquad \mbox{as }z\to 0^-
	\end{align}
	for some real coefficients $(s_n)_n$.

	Moreover if $m_p(\mu)<+\infty$ then all three equation \eqref{eq:psiseries}, \eqref{eq:chiseries} and \eqref{eq:Sseries} hold with $\eps=0$.
\end{lemma}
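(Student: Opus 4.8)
The plan is to establish the two equivalences \eqref{eq:chiseries}$\Leftrightarrow$\eqref{eq:Sseries} and \eqref{eq:psiseries}$\Leftrightarrow$\eqref{eq:chiseries} separately, and then to read off the ``moreover'' clause. The first of these is purely algebraic, coming from the defining identity $S_\mu(z)=\tfrac{z+1}{z}\chi_\mu(z)$; the second is an asymptotic inverse-function statement, since $\chi_\mu=\psi_\mu^{\langle-1\rangle}$ with both functions analytic on their domains by Proposition \ref{thm:useful}(i), and this is where the actual work sits.

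For \eqref{eq:chiseries}$\Leftrightarrow$\eqref{eq:Sseries} I would use $\chi_\mu(0^-)=0$ (recorded in the proof of Lemma \ref{lem:RepS}). Assuming \eqref{eq:chiseries}, dividing by $z$ gives
\[
\frac{\chi_\mu(z)}{z}=\sum_{m=0}^{p-1}c_{m+1}z^m+o(z^{p-1+\varepsilon}),
\]
and since $S_\mu(z)=\chi_\mu(z)/z+\chi_\mu(z)$ while the summand $\sum_{n=1}^p c_n z^n$ contributes only at orders $\geq 1$ (its top term $c_p z^p$ being $o(z^{p-1+\varepsilon})$), one reads off \eqref{eq:Sseries} with $s_m=c_{m+1}+c_m$, where $c_0:=0$; in particular $s_0=c_1=1/m_1(\mu)$, matching $S_\mu(0^-)=1/m_1(\mu)$ from Proposition \ref{thm:useful}(iii). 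The converse multiplies by the analytic factor $\tfrac{z}{1+z}=z-z^2+\cdots$, using $\tfrac{z}{1+z}\cdot o(z^{p-1+\varepsilon})=o(z^{p+\varepsilon})$, to recover \eqref{eq:chiseries}. Since $(c_n)\mapsto(s_n)$ is triangular and invertible, the two expansions are genuinely equivalent.

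For \eqref{eq:psiseries}$\Leftrightarrow$\eqref{eq:chiseries} I would argue by inversion, treating $\eqref{eq:psiseries}\Rightarrow\eqref{eq:chiseries}$ (the converse is symmetric, with the moments and the inverse coefficients exchanging roles). From the $n=1$ term of \eqref{eq:psiseries} one has $\psi_\mu(w)=m_1(\mu)w+o(w)$ with $m_1(\mu)>0$ (as $\mu\neq\delta_0$), whence $\chi_\mu(z)\sim z/m_1(\mu)$; in particular $o(\chi_\mu(z)^{p+\varepsilon})=o(z^{p+\varepsilon})$. Substituting $w=\chi_\mu(z)$ into \eqref{eq:psiseries} therefore yields
\[
z=\sum_{n=1}^p m_n(\mu)\,\chi_\mu(z)^n+o(z^{p+\varepsilon}),\qquad z\to0^-.
\]
Let $c_1,\dots,c_p$ be the coefficients of the formal inverse of $\sum_{n\geq1}m_n(\mu)w^n$, so that $Q(z):=\sum_{n=1}^p c_n z^n$ satisfies $\sum_{n=1}^p m_n(\mu)Q(z)^n=z+O(z^{p+1})$; these $c_n$ are exactly the coefficients of \eqref{eq:chiseries}, by uniqueness of asymptotic coefficients and because the genuine Taylor coefficients of $\psi_\mu$ are the moments. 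Writing $\chi_\mu=Q+r$ and expanding, the displayed relation reduces to $m_1(\mu)\,r(z)=o(z^{p+\varepsilon})+O(z^{p+1})+O(z\,r(z))+O(r(z)^2)$, and I would bootstrap: starting from $r=o(z)$, the bound $r=o(z^j)$ forces $O(z\,r)=o(z^{j+1})$ and $O(r^2)=o(z^{2j})$, which dominate $O(z^{p+1})$ for $j<p$, hence $r=o(z^{j+1})$; after $p-1$ steps this gives $r=o(z^p)$.

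The main obstacle is ensuring the \emph{final} remainder is the sharp $o(z^{p+\varepsilon})$ rather than merely $o(z^p)$, and this is exactly where the choice of $Q$ as the genuine formal inverse pays off: it pushes the polynomial discrepancy to order $p+1$, so in the last iteration $r=o(z^p)$ gives $O(z\,r)=o(z^{p+1})$ and $O(r^2)=o(z^{2p})$, while $O(z^{p+1})=o(z^{p+\varepsilon})$ since $\varepsilon<1$; all non-$\psi$ terms are thus absorbed into the $\psi$-remainder, leaving $r=o(z^{p+\varepsilon})$, which is \eqref{eq:chiseries}. Finally, for the ``moreover'' clause, if $m_p(\mu)<+\infty$ then $m_k(\mu)<+\infty$ for all $k\leq p$ (as $t^k\leq 1+t^p$), and by Lemma \ref{lem:psi1} together with monotone convergence $\psi_\mu^{(k)}(z)\to k!\,m_k(\mu)$ as $z\to0^-$ for $k\leq p$; hence $\psi_\mu$ admits a one-sided $p$th derivative at $0^-$ and the Peano form of Taylor's theorem yields \eqref{eq:psiseries} with $\varepsilon=0$. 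The two equivalences above then propagate $\varepsilon=0$ to \eqref{eq:chiseries} and \eqref{eq:Sseries}.
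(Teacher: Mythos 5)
Your proposal is correct and follows essentially the same route as the paper: the $\chi$--$S$ equivalence is read off from $S_\mu(z)=\chi_\mu(z)+\chi_\mu(z)/z$, and the $\psi$--$\chi$ equivalence is obtained by substituting $z\mapsto\chi_\mu(z)$ and using $\chi_\mu(z)\sim z/m_1(\mu)$ to identify $o(\chi_\mu(z)^{p+\eps})$ with $o(z^{p+\eps})$. The only cosmetic difference is that you extract the expansion by bootstrapping the remainder $r=\chi_\mu-Q$ for fixed $p$, whereas the paper runs an induction on the order $p$ of the expansion; both are the same iterative-improvement argument.
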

\begin{proof}
	Equivalence between \eqref{eq:chiseries} and \eqref{eq:Sseries} follows from the definition of $S_\mu$.	
	We shall show by induction that \eqref{eq:psiseries} implies \eqref{eq:chiseries}. The proof of the converse implication is similar. 
	
	First we note that if $m_1(\mu)<+\infty$, then  $\chi_\mu(z)\sim z/m_1(\mu)$ as $z\to0^-$, which follows immediately from
	\[
	m_1(\mu)=\lim_{z\to 0^-}\frac{\psi_\mu(z)}{z} = \lim_{t\to 0^-}\frac{t}{\chi_\mu(t)},
	\]
	where we have made the substitution $z=\chi_\mu(t)$. This in particular implies that if $m_1(\mu)<+\infty$, then  $o(\chi_\mu(z)^{\alpha})$ coincides with $o(z^{\alpha})$ for all $\alpha\in\R$.
	
	Substituting $z\mapsto \chi_\mu(z)$ in \eqref{eq:psiseries} with $p=1$ we get
	\[
	z = m_1(\mu) \chi_\mu(z)+o(\chi_\mu(z)^{1+\eps})=m_1(\mu) \chi_\mu(z)+o(z^{1+\eps}),
	\]
	which is \eqref{eq:chiseries}. Suppose that \eqref{eq:psiseries} implies \eqref{eq:chiseries} for expansions of order $p$. Take the expansion \eqref{eq:psiseries} of order $p+1$. Plugging $z\mapsto \chi_\mu(z)$ into this expansion gives us
	\begin{align}\label{eq:chiseries2}
		z = m_1(\mu) \chi_\mu(z) + \sum_{n=2}^{p+1} m_n(\mu) \chi_\mu(z)^n+o(z^{p+1+\eps}).
	\end{align}
	By the induction hypothesis, we have that expansion \eqref{eq:chiseries} of order $p$ holds. This implies that for $n\geq 2$ we have 
	\[
	\chi_\mu(z)^n = \left(\sum_{k=1}^p c_k z^k + o(z^{p+\eps})\right)^n = \left(\sum_{k=1}^p c_k z^k \right)^n + o(z^{p+1+\eps}).
	\]
	With such substitution, \eqref{eq:chiseries2} gives \eqref{eq:chiseries}.	
\end{proof}

We conclude this section with a lemma which gives a direct connection between derivatives of $\psi_\mu$ and $\chi_\mu$.
\begin{lemma}\label{lem:Bruno}\ 
	Let $\mu\in\mathcal{M}_+$ and $p\in\mathbb{N}$. 
	\begin{enumerate}[(i)]
		\item For $z\in(\delta-1,0)$ we have
		\begin{align}\label{eq:Bruno}
			\psi_\mu^{(p+1)}(\chi_\mu(z)) \chi_\mu^\prime(z)^{p+1} + \psi_\mu^\prime(\chi_\mu(z)) \chi_\mu^{(p+1)}(z) = Q(z)
		\end{align}
		where $Q$ on the right hand side is a polynomial in $\psi_\mu^{(l)}(\chi_\mu(z))$ and $\chi_\mu^{(l)}(z)$ for $l=1,\ldots,p$.
		\item If $m_p(\mu)<+\infty$, then the limit
		\begin{align}\label{eq:BrunoLimit}
			\lim_{z\to0^-}\left(\psi_\mu^{(p+1)}(\chi_\mu(z)) \chi_\mu^\prime(z)^{p+2} +  \chi_\mu^{(p+1)}(z)\right)
		\end{align} 
		exists and is finite.
	\end{enumerate}
\end{lemma}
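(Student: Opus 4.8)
The plan is to exploit the defining inverse relation $\psi_\mu(\chi_\mu(z))=z$ on $(\delta-1,0)$ and differentiate it repeatedly. Since both $\psi_\mu$ and $\chi_\mu$ are analytic (Proposition \ref{thm:useful}$(i)$), the Fa\`a di Bruno formula applies. For part $(i)$ I would compute $\tfrac{\dd^{p+1}}{\dd z^{p+1}}$ of the identity $\psi_\mu(\chi_\mu(z))=z$; because $p\geq1$, the right-hand side has vanishing $(p+1)$-st derivative, so the Fa\`a di Bruno expansion, indexed by partitions of $\{1,\dots,p+1\}$, sums to zero. Exactly two terms are ``extreme'': the all-singletons partition produces $\psi_\mu^{(p+1)}(\chi_\mu(z))\,\chi_\mu'(z)^{p+1}$, and the one-block partition produces $\psi_\mu'(\chi_\mu(z))\,\chi_\mu^{(p+1)}(z)$. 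Every remaining partition has at least two blocks, hence a largest block of size at most $p$, so it contributes a product of factors $\psi_\mu^{(m)}(\chi_\mu(z))$ and $\chi_\mu^{(m)}(z)$ with $m\leq p$. Moving all such terms to the other side yields \eqref{eq:Bruno}, with $Q$ a polynomial in $\psi_\mu^{(l)}(\chi_\mu(z))$ and $\chi_\mu^{(l)}(z)$, $l=1,\dots,p$.

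For part $(ii)$ I would first multiply \eqref{eq:Bruno} by $\chi_\mu'(z)$ and use $\psi_\mu'(\chi_\mu(z))\,\chi_\mu'(z)=1$ (the first derivative of the inverse relation). This collapses the second term and shows that the quantity inside the limit \eqref{eq:BrunoLimit} equals $Q(z)\,\chi_\mu'(z)$. Thus it suffices to prove that $Q$ and $\chi_\mu'$ have finite limits at $0^-$. Here $\chi_\mu'(z)\to 1/m_1(\mu)\in(0,+\infty)$, finite and nonzero because $m_1(\mu)\leq 1+m_p(\mu)<+\infty$ and $m_1(\mu)>0$ as $\mu\neq\delta_0$; this uses $\psi_\mu'(w)\to m_1(\mu)$ from Lemma \ref{lem:psi1}.

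The crux is that $\chi_\mu^{(l)}(0^-):=\lim_{z\to0^-}\chi_\mu^{(l)}(z)$ exists and is finite for every $l\in\{1,\dots,p\}$, even though $m_{p+1}(\mu)$ may be infinite, so that $\psi_\mu^{(p+1)}(\chi_\mu(z))$ and $\chi_\mu^{(p+1)}(z)$ may individually blow up. I would prove this by induction on $l$. The base case $l=1$ is the limit $\chi_\mu'(z)\to1/m_1(\mu)$ noted above. For $2\leq l\leq p$, I apply part $(i)$ at order $l$ (i.e.\ with $p$ replaced by $l-1$) and solve for $\chi_\mu^{(l)}$:
\[
\chi_\mu^{(l)}(z)=\frac{Q_{l-1}(z)-\psi_\mu^{(l)}(\chi_\mu(z))\,\chi_\mu'(z)^{l}}{\psi_\mu'(\chi_\mu(z))}.
\]
As $z\to0^-$ one has $\chi_\mu(z)\to0^-$, so $\psi_\mu^{(m)}(\chi_\mu(z))\to m!\,m_m(\mu)$, finite for all $m\leq p$ by Lemma \ref{lem:psi1} and $m_p(\mu)<+\infty$; combined with the inductive hypothesis that $\chi_\mu^{(m)}(0^-)$ is finite for $m<l$, and with $\psi_\mu'(\chi_\mu(z))\to m_1(\mu)\neq0$ in the denominator, this forces $\chi_\mu^{(l)}(z)$ to a finite limit. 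Consequently $Q(z)$ in \eqref{eq:Bruno} converges, whence $Q(z)\,\chi_\mu'(z)$ converges, proving $(ii)$.

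The main obstacle is conceptual rather than computational: one must recognize that multiplying by $\chi_\mu'$ and invoking $\psi_\mu'(\chi_\mu)\chi_\mu'=1$ is precisely the operation that cancels the two potentially divergent top-order terms and leaves a quantity assembled only from derivatives of order $\leq p$, whose limits are controlled by the single assumption $m_p(\mu)<+\infty$. (Alternatively, the finiteness of $\chi_\mu^{(l)}(0^-)$ could be extracted from the expansion \eqref{eq:chiseries} of Lemma \ref{lem:ConvergenceS} together with the one-sided Taylor expansion of Lemma \ref{lem:expansion}, but the inductive use of part $(i)$ is more direct and self-contained.)
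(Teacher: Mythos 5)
Your proposal is correct and follows essentially the same route as the paper: Fa\`a di Bruno applied to $\psi_\mu(\chi_\mu(z))=z$ for part $(i)$, and for part $(ii)$ multiplying \eqref{eq:Bruno} by $\chi_\mu'(z)$ and using $\psi_\mu'(\chi_\mu(z))\chi_\mu'(z)=1$ to reduce the limit to that of $Q(z)\chi_\mu'(z)$. The only difference is that the paper merely asserts that $\psi_\mu^{(l)}(\chi_\mu(z))$ and $\chi_\mu^{(l)}(z)$, $l=1,\dots,p$, have finite limits at $0^-$, whereas you supply a correct inductive proof of this via part $(i)$ at lower orders --- a welcome completion rather than a deviation.
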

\begin{proof}
	\begin{enumerate}[$(i)$]
		\item The first equality follows from the Fa\'a di Bruno formula, which gives for $p\in\mathbb{N}$,
		\[
		0 = \frac{\dd^{p+1}}{\dd z^{p+1}} \psi_\mu\left(\chi_\mu(z)\right) = \sum_{\pi\in\Pi} \psi_\mu^{(|\pi|)}(\chi_\mu(z))\prod_{B\in\pi}\chi_\mu^{(|B|)}(z),
		\]
		where $\pi$	runs through the set $\Pi$ of all partitions of the set $\{1,\ldots, p+1 \}$ and $B\in\pi$ runs through the list of all of the blocks of the partition $\pi$. The two terms on the left hand side of \eqref{eq:Bruno} correspond to partitions $\{\{1\},\ldots,\{p+1\}\}$ and  $\{\{1,\ldots,p+1\}\}$. The right hand side of \eqref{eq:Bruno} contains all remaining terms.
		\item  Since $\psi_\mu^\prime(\chi_\mu(z))=1/\chi_\mu^\prime(z)$, \eqref{eq:BrunoLimit} follows from $(i)$ and
		the fact that all $\psi_\mu^{(l)}(\chi_\mu(z))$ and $\chi_\mu^{(l)}(z)$, $l=1,\ldots,p$, have finite limits as $z\to0^-$.
	\end{enumerate}
\end{proof}

\section{$S$-transform and the tail at $+\infty$}\label{sec:4}

Our main tool concerns the relation the right tail of a measure and its $S$-transform. The results of this section are the workhorse of this paper and all results that we prove in the subsequent sections rely on them. Our work allows to answer the open question posed in \cite[\S2.4 (1) and (2)]{CH18}.

We have that (see Lemma \ref{lem:ConvergenceS}), if $\mu\in\mathcal{M}_p$, $p\in\mathbb{N}\cup\{0\}$,
then
\[
S_\mu(z) = \sum_{n=0}^{p-1} s_n z^n + r(z),\qquad z\in(\mu\{0\}-1,0),
\]
where $(s_n)_n$ are real numbers and each $s_k$ is a rational function of moments of $\mu$ up to $k+1$. Moreover, $r(z)=o(z^{p-1})$ as $z\to0^-$. It turns out that the regular variation of $\bar{\mu}$ is equivalent to regular variation (or its relatives) of function $r$. However, instead of working with $r$, we decided to describe this correspondence in terms of its $p$th derivative, $S_\mu^{(p)}=r^{(p)}$. As all involved functions are analytic, there are no regularity issues. Such approach allows to eliminate the ``Taylor polynomial'' $\sum_{n=0}^{p-1} s_n z^n$ and  avoid dealing with complicated combinatorics of coefficients in the series expansion of the $S$-transform.

Recall that we say that a measure $\mu$ has regularly varying tail with index $\alpha$ when
\begin{align}\label{eq:L}
	\bar{\mu}(x)=\mu\big((x,+\infty)\big)\sim \frac{L(x)}{x^\alpha}.
\end{align}

The first result concerns probability measures in $\mathcal{M}_0$, that is, measures whose first moment is infinite.

\begin{thm}\label{thm:a0}
	Let $\alpha=0$ and assume that $L\in \mathcal{R}_0$. If \eqref{eq:L} holds, then 
	\begin{align}\label{eq:SinKR}
	x\mapsto \frac{1}{S_\mu(-1/x)} \in K\mathcal{R}_\infty.
	\end{align}
	Conversely, if \eqref{eq:SinKR} is satisfied, then \eqref{eq:L} holds with
	\[
	L(x)=1/f_\mu^{\langle-1\rangle}(x),
	\]
	where 
	\[
	f_\mu(x):=-\frac{1}{\chi_\mu(-1/x)}=\frac{x-1}{S_\mu(-1/x)}.
	\]
\end{thm}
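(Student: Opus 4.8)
The plan is to funnel the whole statement through one Tauberian relation for the kernel $k(y)=y/(1+y)^2$, combined with the inversion machinery for rapidly varying functions in Theorem \ref{thm:KR} and Remark \ref{rem:KR}. First I would record the integral representation $-\psi_\mu(-1/x)=\int_0^\infty \frac{x}{(x+t)^2}\,\bar\mu(t)\,\dd t$, obtained by writing $\frac{st}{1+st}=\int_0^t s(1+su)^{-2}\,\dd u$ and applying Tonelli's theorem to $\psi_\mu(-s)=-\int\frac{st}{1+st}\mu(\dd t)$, then substituting $s=1/x$. Setting $P(x):=-\psi_\mu(-1/x)$, this says exactly $P=k\stackrel{M}{\ast}\bar\mu$. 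Since $\check{k}(z)=\Gamma(1-z)\Gamma(1+z)$ is absolutely convergent for $|\Re z|<1$, does not vanish on $\Re z=0$ (there $\check{k}(iy)=|\Gamma(1+iy)|^2>0$), and satisfies $\check{k}(0)=1$, and since $\bar\mu\le 1$ is monotone (so both the local boundedness of $x^\eta\bar\mu(x)$ near $0$ and the Tauberian condition hold), Theorem \ref{thm:tauber}$(i)$ yields the equivalence $\bar\mu(x)\sim L(x)\iff P(x)\sim L(x)$ for any $L\in\mathcal{R}_0$, in both directions. This is the only place the tail condition \eqref{eq:L} (with $\alpha=0$) enters.

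Second, I would establish the functional identity relating $P$ and $f_\mu$. From $f_\mu(x)=-1/\chi_\mu(-1/x)$ we have $\chi_\mu(-1/x)=-1/f_\mu(x)$; applying $\psi_\mu$ and using $\psi_\mu\circ\chi_\mu=\mathrm{id}$ gives $\psi_\mu\big(-1/f_\mu(x)\big)=-1/x$, that is $P(f_\mu(x))=1/x$. Hence
\[
f_\mu^{\langle-1\rangle}(x)=\frac{1}{P(x)},
\]
which simultaneously explains the claimed formula $L=1/f_\mu^{\langle-1\rangle}$. Using analyticity together with Proposition \ref{thm:useful}$(i)$--$(ii)$ and the behaviour of $\chi_\mu$ at the endpoints, one checks that on $x>(1-\delta)^{-1}$ the map $x\mapsto 1/P(x)$ is continuous and strictly increasing to $+\infty$, with inverse $f_\mu$, so these compositional inverses are genuine.

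Third, I would chain the two inversion lemmas. Because $1/P$ is continuous, increasing to $+\infty$, with inverse $f_\mu$, Theorem \ref{thm:KR}$(i)$ gives $1/P\in\mathcal{R}_0\iff f_\mu\in K\mathcal{R}_\infty$. Next, since $S_\mu$ is positive and decreasing, $x\mapsto 1/S_\mu(-1/x)$ is increasing, and the given identity reads $(x-1)\cdot\frac{1}{S_\mu(-1/x)}=f_\mu(x)$; Remark \ref{rem:KR} then gives $\frac{1}{S_\mu(-1/x)}\in K\mathcal{R}_\infty\iff f_\mu\in K\mathcal{R}_\infty$. Assembling: for the forward implication, \eqref{eq:L} with $\alpha=0$ forces $P\sim L$ by Step one, so $P$ and hence $1/P$ are slowly varying, whence $f_\mu\in K\mathcal{R}_\infty$ and finally $\frac{1}{S_\mu(-1/x)}\in K\mathcal{R}_\infty$. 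For the converse, $\frac{1}{S_\mu(-1/x)}\in K\mathcal{R}_\infty$ gives $f_\mu\in K\mathcal{R}_\infty$, then $1/P\in\mathcal{R}_0$, so $P$ is slowly varying, and the converse half of Step one (valid since $\bar\mu$ is monotone) yields $\bar\mu(x)\sim P(x)=1/f_\mu^{\langle-1\rangle}(x)$, which is \eqref{eq:L} with the asserted $L$.

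The individual steps are short, so the real difficulty is not any single computation but the bookkeeping that makes them fit together: verifying the precise hypotheses of the Tauberian Theorem \ref{thm:tauber} (convergence strip, non-vanishing on the imaginary axis, local boundedness), and tracking the monotonicity directions and domains — $P$ decreases to $0$ while $1/P$ and $f_\mu$ increase to $+\infty$ on $x>(1-\delta)^{-1}$ — so that Theorem \ref{thm:KR}$(i)$ and Remark \ref{rem:KR} are each applied to a function that is genuinely monotone, continuous, and tending to $+\infty$. I expect the cleanest way to present this is to isolate the representation $P=k\stackrel{M}{\ast}\bar\mu$ and the identity $f_\mu^{\langle-1\rangle}=1/P$ as two displayed facts, after which the two equivalences follow mechanically.
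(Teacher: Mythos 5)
Your proposal is correct and follows essentially the same route as the paper's proof: the identity $-\psi_\mu(-1/x)=k\stackrel{M}{\ast}\bar\mu(x)$ with $k(y)=y/(1+y)^2$ and Theorem \ref{thm:tauber}$(i)$ to pass between $\bar\mu$ and $-\psi_\mu(-1/\cdot)$, then Theorem \ref{thm:KR}$(i)$ applied to the inverse pair $\bigl(-1/\psi_\mu(-1/\cdot),\,-1/\chi_\mu(-1/\cdot)\bigr)$, and finally Remark \ref{rem:KR} to trade $f_\mu$ for $1/S_\mu(-1/\cdot)$. The only difference is that you spell out a few hypothesis checks (the explicit Mellin transform $\Gamma(1-z)\Gamma(1+z)$, monotonicity and domains) that the paper leaves implicit.
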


\begin{remark}
	Suppose that $L_1, L_2\in \mathcal{R}_0$ are decreasing to $0$. Then, condition $L_1(x)\sim L_2(x)$ does not imply in general that $L_1^{\langle-1\rangle}(t)\sim L_2^{\langle-1\rangle}(t)$ as $t\to 0^+$. E.g. consider $L_1(x)=1/\log(x)$ and $L_2(x)=1/\log(1+x)$. Thus, in Theorem \ref{thm:a0} we have $L(x)\sim 1/f_\mu^{\langle-1\rangle}(x)$ while in general it is not true that $L^{\langle-1\rangle}(t)\sim f_\mu(1/t)$ as $t\to 0^+$.
\end{remark}

\begin{remark}
	If in Theorem \ref{thm:a0} we consider $\bar{\mu}\in \Pi_L(1)$ instead of $\bar{\mu}\in \mathcal{R}_0$, then one can show the function $f_\mu$ belongs to a subclass $\Gamma$ of $K\mathcal{R}_\infty$, \cite[Section 3.10]{BGT89}.
\end{remark}

\begin{proof}[Proof of Theorem~\ref{thm:a0}]
	For any $x>0$ we have 
	\begin{align*}
		-\psi_\mu\left(-1/x\right)&=\int_{[0,+\infty)} \frac{t}{t+x}\mu(\dd t)= 1-\int_{[0,+\infty)} \frac{x}{t+x}\mu(\dd t) \\
		&= \int_0^\infty \frac{x}{(t+x)^2}\bar{\mu}(t)\dd t=k\stackrel{M}{\ast}\bar{\mu}(x),
	\end{align*}	
	where $k(x)=x/(1+x)^2$. The Mellin transform of $k$ is absolutely convergent for $|\mathfrak{R}z|<1$ and $\check{k}(0)=1$. Since $\bar{\mu}$ is monotonic and $x \bar{\mu}(x)$ is bounded on intervals $(0,a]$, by Theorem \ref{thm:tauber} $(i)$, we see that 
	\eqref{eq:L} is equivalent to
	\[
	-\psi_\mu\left(-1/x\right)=\int_{[0,+\infty)} \frac{t}{t+x}\mu(\dd t) \sim L(x).
	\]
	Thus, if we assume \eqref{eq:L}, we have
	\[
	-\frac{1}{\psi_\mu\left(-1/x\right)}\sim\frac{1}{L(x)}\in \mathcal{R}_0.
	\]
	Further, by Theorem \ref{thm:KR}, the inverse of $x\mapsto-1/\psi_\mu(-1/x)$, which is $x\mapsto-1/\chi_\mu(-1/x)$ belongs to $K\mathcal{R}_{\infty}$. 
	We have
	\[
	-\frac{1}{\chi_\mu(-1/x)} = \frac{x-1}{S_\mu\left(-1/x\right)}
	\]
	and so, by Remark \ref{rem:KR}, $x\mapsto1/ S_\mu\left(-1/x\right)$ belongs to $K\mathcal{R}_\infty$ as well.
	
	Since all steps above can be reversed, the prove is complete.
\end{proof}

\begin{Corollary}\label{cor:expv}
	Assume that $\mu\in\mathcal{M}_+$ is such that $S_\mu(-1/x)=\exp(-s(x))$, where $s\in\mathcal{R}_\rho$ with $\rho>0$. Then,
	\[
	\bar{\mu}(x)\sim 1/s^{\langle-1\rangle}(\log(x)).
	\] 
	In particular, if $L\in\mathcal{R}_0$ and $s(x)=x^\rho L^\rho(x)$, then
	\[
	\bar{\mu}(x) \sim 1/\left(\log(x)^{1/\rho} L^\#(\log(x)^{1/\rho})\right).
	\]
\end{Corollary}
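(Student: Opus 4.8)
The plan is to deduce the result from the converse half of Theorem~\ref{thm:a0}, so the first task is to verify its hypothesis \eqref{eq:SinKR}. Since $S_\mu(-1/x)=\exp(-s(x))$, we have $1/S_\mu(-1/x)=\exp(s(x))$. Because $S_\mu$ is strictly decreasing on $(\delta-1,0)$ by Proposition~\ref{thm:useful}$(ii)$, the map $x\mapsto S_\mu(-1/x)$ is decreasing, hence $x\mapsto\exp(s(x))$ is increasing; in particular $s$ is increasing to $+\infty$. Moreover, for $\lambda>1$ regular variation gives $s(\lambda x)/s(x)\to\lambda^\rho>1$, whence $s(\lambda x)-s(x)=s(x)\big(s(\lambda x)/s(x)-1\big)\to+\infty$ and therefore $\exp(s(\lambda x))/\exp(s(x))\to+\infty$. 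Thus $\exp(s(\cdot))$ is a nondecreasing rapidly varying function, and Theorem~\ref{thm:KR}$(ii)$ places it in $K\mathcal{R}_\infty$. This is exactly \eqref{eq:SinKR}.

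Applying the converse direction of Theorem~\ref{thm:a0} (with $\alpha=0$) then yields $\bar{\mu}(x)\sim 1/f_\mu^{\langle-1\rangle}(x)$, where $f_\mu(x)=(x-1)/S_\mu(-1/x)=(x-1)\exp(s(x))$. Since $f_\mu$ is increasing to $+\infty$, its compositional inverse is well defined for large arguments, and the whole problem reduces to identifying the asymptotics of $f_\mu^{\langle-1\rangle}$.

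The main work is this inversion. Taking logarithms, $\log f_\mu(x)=\log(x-1)+s(x)$, and because $s\in\mathcal{R}_\rho$ with $\rho>0$ dominates every logarithmic term we have $\log(x-1)=o(s(x))$, so $\log f_\mu(x)\sim s(x)$. Writing $x=f_\mu^{\langle-1\rangle}(y)$ and $u=\log y$, this reads $s(x)\sim u$ as $u\to+\infty$. The step to watch is the passage from $s(x)\sim u$ to $x\sim s^{\langle-1\rangle}(u)$: this is the standard fact that a regularly varying function of positive index admits an asymptotically unique inverse, which I would justify via the uniform convergence theorem for regular variation, noting that if $x/s^{\langle-1\rangle}(u)\to\theta\neq1$ along a subsequence, then $s(x)/s(s^{\langle-1\rangle}(u))\to\theta^\rho\neq1$, contradicting $s(x)\sim u\sim s(s^{\langle-1\rangle}(u))$, where the last equivalence comes from Theorem~\ref{thm:conj}. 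This gives $f_\mu^{\langle-1\rangle}(y)\sim s^{\langle-1\rangle}(\log y)$, and hence $\bar{\mu}(x)\sim 1/s^{\langle-1\rangle}(\log x)$.

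For the final assertion I would specialise $s(x)=x^\rho L^\rho(x)$ and invoke the ``moreover'' part of Theorem~\ref{thm:conj} with $a=\rho$ and $b=1$ (so that $ab=\rho$), obtaining $s^{\langle-1\rangle}(x)\sim x^{1/\rho}L^\#(x^{1/\rho})$. Substituting the argument $\log x$, which tends to $+\infty$ and therefore preserves the equivalence, yields $\bar{\mu}(x)\sim 1/\big(\log(x)^{1/\rho}L^\#(\log(x)^{1/\rho})\big)$, as claimed. The only genuinely delicate point is the inversion in the third paragraph; everything else is a direct chaining of the cited results.
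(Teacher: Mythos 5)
Your proposal is correct and follows essentially the same route as the paper: verify \eqref{eq:SinKR} via rapid variation and Theorem \ref{thm:KR}$(ii)$, apply the converse half of Theorem \ref{thm:a0}, asymptotically invert $f_\mu(x)=(x-1)e^{s(x)}$ to get $f_\mu^{\langle-1\rangle}(t)\sim s^{\langle-1\rangle}(\log t)$, and finish with Theorem \ref{thm:conj}. The only (harmless) deviations are that you check membership in $K\mathcal{R}_\infty$ directly for $\exp(s(\cdot))$ rather than going through $f_\mu$ and Remark \ref{rem:KR}, and you spell out the inversion step that the paper states without proof.
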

\begin{proof}
	We will start by showing that our assumptions fall in the framework of Theorem \ref{thm:a0}, i.e. \eqref{eq:SinKR} holds. Clearly, we have $x\mapsto1/S_\mu(-1/x)=\exp(s(x))$ belongs to $\mathcal{R}_\infty$. Moreover, it is known that $f_\mu(x)=1/\chi_\mu(-1/x)=(x-1)/S_\mu(-1/x)$ is nondecreasing and clearly also belongs to $\mathcal{R}_\infty$. Thus, by Theorem \ref{thm:KR} $(ii)$ we see that $f_\mu$ belongs to $K\mathcal{R}_\infty$.  Remark \ref{rem:KR} implies \eqref{eq:SinKR}. 
	
	We have
	\[
	f_\mu(x) = e^{s(x)(1+o(1))},
	\]
	which implies that
	\[
	f_\mu^{\langle-1\rangle}(t)\sim s^{\langle-1\rangle}(\log(t)). 
	\]
	Thus, by Theorem \ref{thm:a0}, we obtain the first part of the assertion. The second part follows from Theorem \ref{thm:conj}.
\end{proof}

\begin{thm}\label{thm:01}
	Let $\alpha\in(0,1)$ and $L\in \mathcal{R}_0$. Then, \eqref{eq:L} is equivalent to
	\begin{align}
	\label{eq:S01}
	S_{\mu}\left(-\frac1x\right)&\sim \left( \frac{\sin(\pi \alpha)}{\pi \alpha} \right)^{1/\alpha}\frac{x^{1-1/\alpha}}{ M^\#(x^{1/\alpha})},
	\end{align}
	where $M^\#$ is a de Bruijn conjugate of a slowly varying function $M:=L^{-1/\alpha}$.
\end{thm}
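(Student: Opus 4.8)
The plan is to pass from the tail of $\mu$ to the small-argument behaviour of $\psi_\mu$, then invert to reach $\chi_\mu$ and finally $S_\mu$, mirroring the strategy of Theorem \ref{thm:a0} but now in the regularly varying regime $\alpha\in(0,1)$ where the slowly varying computation is replaced by de Bruijn inversion. As in the proof of Theorem \ref{thm:a0}, I would start from the identity
\[
-\psi_\mu\!\left(-\tfrac1x\right)=\int_{[0,+\infty)}\frac{t}{t+x}\,\mu(\dd t)=\int_0^\infty\frac{x}{(t+x)^2}\,\bar{\mu}(t)\,\dd t .
\]
Writing $U(t)=\int_0^t\bar{\mu}(s)\,\dd s$, the right-hand side is $x\int_0^\infty (t+x)^{-2}\,\dd U(t)$, a Stieltjes-type transform of order $\rho=2$. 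Since $\bar{\mu}$ is nonincreasing, Karamata's theorem together with the monotone density theorem show that \eqref{eq:L} is equivalent to $U(x)\sim\frac{1}{1-\alpha}x^{1-\alpha}L(x)$, and Theorem \ref{thm:TaubSt} (applied with $\rho=2$ and $\sigma=1-\alpha\in[0,2)$) then turns this into the equivalent statement
\[
-\psi_\mu\!\left(-\tfrac1x\right)\sim \Gamma(1+\alpha)\Gamma(1-\alpha)\,x^{-\alpha}L(x)=\frac{\pi\alpha}{\sin(\pi\alpha)}\,x^{-\alpha}L(x),\qquad x\to+\infty,
\]
the final equality being Euler's reflection formula. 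Because this Tauberian step is an equivalence, it serves both directions of the theorem.

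Next I would invert. Set $P(x):=-1/\psi_\mu(-1/x)$, so that by the display above $P\in\mathcal{R}_\alpha$ with $P(x)\sim \frac{\sin(\pi\alpha)}{\pi\alpha}\,x^{\alpha}/L(x)$. The key algebraic observation is that the function $f_\mu$ from Theorem \ref{thm:a0}, namely $f_\mu(x)=-1/\chi_\mu(-1/x)=(x-1)/S_\mu(-1/x)$, is exactly the compositional inverse of $P$: indeed $f_\mu(P(x))=-1/\chi_\mu\big(\psi_\mu(-1/x)\big)=x$ because $\chi_\mu\circ\psi_\mu=\mathrm{id}$, and $f_\mu$ is nondecreasing and tends to $+\infty$ (here $m_1(\mu)=+\infty$, so $S_\mu(0^-)=0$ by Proposition \ref{thm:useful}$(iii)$), hence a genuine inverse. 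Writing $\frac{\sin(\pi\alpha)}{\pi\alpha}\,x^{\alpha}/L(x)=\frac{\sin(\pi\alpha)}{\pi\alpha}\big(xM(x)\big)^{\alpha}$ with $M:=L^{-1/\alpha}\in\mathcal{R}_0$, I would apply Theorem \ref{thm:conj} (with parameters $a=\alpha$, $b=1$, after absorbing the constant $c=\tfrac{\sin(\pi\alpha)}{\pi\alpha}$ by a rescaling of the argument and using that $M^\#$ is slowly varying) to obtain
\[
f_\mu(x)=P^{\langle-1\rangle}(x)\sim \Big(\tfrac{\sin(\pi\alpha)}{\pi\alpha}\Big)^{-1/\alpha}x^{1/\alpha}\,M^\#\!\big(x^{1/\alpha}\big).
\]

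Finally, from $S_\mu(-1/x)=(x-1)/f_\mu(x)\sim x/f_\mu(x)$ the previous display yields exactly \eqref{eq:S01}. For the converse every step reverses: \eqref{eq:S01} gives the asymptotics of $f_\mu\in\mathcal{R}_{1/\alpha}$, inverting back through Theorem \ref{thm:conj} (using $M^{\#\#}\sim M$) recovers the asymptotics of $P$ and hence of $-\psi_\mu(-1/x)$, and the Tauberian equivalence of the first step returns \eqref{eq:L}. I expect the main obstacle to be the bookkeeping in the inversion step: correctly propagating the constant $\tfrac{\sin(\pi\alpha)}{\pi\alpha}$ through the de Bruijn conjugate so that it reappears as $\big(\tfrac{\sin(\pi\alpha)}{\pi\alpha}\big)^{1/\alpha}$ in \eqref{eq:S01}, verifying that rescaling the argument of $M^\#$ by a constant is absorbed by slow variation, and checking that the Tauberian direction from $U$ back to $\bar{\mu}$ genuinely applies — which is exactly where the monotonicity of $\bar{\mu}$ is essential.
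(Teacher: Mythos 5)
Your proposal is correct and follows essentially the same route as the paper: a two-way Abelian/Tauberian step producing $-\psi_\mu(-1/x)\sim\frac{\pi\alpha}{\sin(\pi\alpha)}x^{-\alpha}L(x)$, followed by compositional inversion of $-1/\psi_\mu(-1/\cdot)$ into $-1/\chi_\mu(-1/\cdot)$ via Theorem~\ref{thm:conj} and the de Bruijn conjugate of $M=L^{-1/\alpha}$. The only (cosmetic) difference is the auxiliary monotone function in the Tauberian step — you integrate the tail, $U(t)=\int_0^t\bar{\mu}(s)\,\dd s$, and apply Theorem~\ref{thm:TaubSt} with $\rho=2$, whereas the paper uses the truncated moment $\int_{[0,x]}t\,\mu(\dd t)$ (via Corollary~\ref{cor:taub0}) and $\rho=1$; both yield the same intermediate asymptotics with the same constant.
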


\begin{remark}
If \eqref{eq:L} holds with $L(x) = c \prod_{k=1}^n (\log^{(\circ k)}(x))^{\alpha_k}$ (recall Theorem \ref{thm:conj}), then \eqref{eq:S01} may be replaced by
\begin{align*}
	S_{\mu}\left(-\frac1x\right)\sim \left( \frac{\sin(\pi \alpha)}{\pi \alpha} \right)^{1/\alpha} \frac{x^{1-1/\alpha}} {L^{1/\alpha}(x^{1/\alpha})}.
\end{align*}
In particular, if $L(x)\sim c>0$, then \eqref{eq:S01} is equivalent to
	\begin{align}\label{eqn:4.4}
	S_{\mu}\left(-\frac1x\right)\sim c^{-1/\alpha}\left( \frac{\sin(\pi \alpha)}{\pi \alpha} \right)^{1/\alpha}x^{1-1/\alpha}.
	\end{align}
\end{remark}

\begin{proof}[Proof of Theorem~\ref{thm:01}]
	With $U(x):=\int_{[0,x]} t\,\mu(\dd t)$, $x\geq0$, by Corollary \ref{cor:taub0}, we have that \eqref{eq:L} is equivalent to 
	\[
	U(x)\sim  \frac{\alpha}{1-\alpha} x^{1-\alpha}L(x).
	\]
	By Theorem \ref{thm:TaubSt} with $\sigma=1-\alpha, \rho=1$ and $c=\alpha/(1-\alpha)$, the above asymptotic behavior is equivalent to
	\[
	\int_{[0,+\infty)} \frac{\dd U(t)}{t+x}\sim \alpha \Gamma(\alpha)\Gamma(1-\alpha)x^{-\alpha}L(x).
	\]
	Note that we used there the basic identity $\Gamma(2-\alpha)=(1-\alpha)\Gamma(1-\alpha)$.
	By Euler's reflection formula we have
	\[
	\alpha \Gamma(\alpha)\Gamma(1-\alpha) = \frac{\pi\alpha}{\sin(\pi\alpha)}
	\]
	and thus
	\[
	-\psi_\mu\left(-\frac1x\right) = \int_{[0,+\infty)} \frac{t}{t+x}\mu(\dd t) = \int_{[0,+\infty)} \frac{\dd U(t)}{t+x} \sim \frac{\pi\alpha}{\sin(\pi\alpha)} x^{-\alpha}L(x).
	\]
	Then, with $d:=\sin(\pi\alpha)/(\pi \alpha)$ and $M:=L^{-1/\alpha}\in\mathcal{R}_0$, the right hand side above equals $1/(d\,x^{\alpha}  M^\alpha(x))$. Observe that $-1/\psi_\mu(-1/x)$ asymptotically behaves like $x\mapsto d\,x^{\alpha}  M^\alpha(x)$. Theorem \ref{thm:conj} allows us to determine the asymptotic inverse and we get that the 
	\[
	x\mapsto d^{-1/\alpha} x^{1/\alpha}M^\#(x^{1/\alpha}).
	\]
	Taking into account that the compositional inverse of $-1/\psi_\mu(-1/x)$ is $-1/\chi_\mu(-1/x)$, we get
	\[
	\chi_\mu\left(-\frac1x\right)\sim -d^{1/\alpha}\frac{x^{-1/\alpha}}{ M^\#(x^{1/\alpha})}.
	\]
	Since $-\chi_\mu(-1/x) = S_\mu(-1/x)/(x-1)$, we have equivalently
	\[
	S_\mu\left(-\frac1x\right)\sim d^{1/\alpha}\frac{x^{1-1/\alpha}}{ M^\#(x^{1/\alpha})}.
	\]
\end{proof}

Cases $\alpha<1$ and $\alpha>1$ (see Theorem \ref{thm:p} below) are very different. This is due to the fact that under \eqref{eq:L} for $\alpha<1$ we have $m_1(\mu)=+\infty$ and for $\alpha>1$ the first moment is finite. This two regimes correspond to different characters of corresponding $S$-transforms. A bit surprising, the case $\alpha=1$, despite the fact whether $m_1(\mu)$ is finite or infinite, is treated jointly in a theorem below. However, in the case $m_1(\mu)<+\infty$, its formulation can be considerably simplified, see Remark \ref{rem:8}.
\begin{thm}\label{thm:alpha1}
	Let $\alpha=1$ and $L\in \mathcal{R}_0$. 
	Then \eqref{eq:L} i.e. $\bar{\mu}(x)\sim x^{-1}L(x)$,
	 is equivalent to
	\begin{align}\label{eq:a1}
	x\mapsto\frac{1}{S_\mu\left(-1/x\right)}\in \Pi_M(1),
	\end{align}
	where $M(x):=L\big((x-1)/S_\mu(-1/x)\big)\in \mathcal{R}_0$.
\end{thm}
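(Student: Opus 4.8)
The plan is to reduce the statement to the Tauberian equivalence already recorded in Theorem \ref{thm:taubA1} and then to transport it through de Haan's $\Pi$-conjugation machinery (Theorem \ref{thm:PiConj}). First I would invoke Theorem \ref{thm:taubA1}, which says that \eqref{eq:L} with $\alpha=1$ is equivalent to $g\in\Pi_L(1)$, where $g(x):=x\int_{[0,+\infty)}\frac{t}{t+x}\mu(\dd t)=-x\,\psi_\mu(-1/x)$. Writing $h(x):=-\psi_\mu(-1/x)$, so that $g(x)=xh(x)$, the function $h$ is continuous, positive and decreasing to $0$, hence $x\mapsto 1/h(x)$ is continuous and increasing; moreover, exactly as in the proof of Theorem \ref{thm:a0}, the compositional inverse of $x\mapsto 1/h(x)=-1/\psi_\mu(-1/x)$ is precisely $f_\mu(x)=-1/\chi_\mu(-1/x)=(x-1)/S_\mu(-1/x)$.

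Next I would set up the conjugation. Since $g\in\Pi_L(1)$ has positive index, Remark \ref{rem:PiInv} gives $1/g\in\Pi_{L/g^2}(-1)$ (this is reversible, since applying the same remark to $1/g$ returns $g\in\Pi_L(1)$). I then apply Definition \ref{def:conj} and Theorem \ref{thm:PiConj} to $\hat f:=1/g\in\Pi_{\tilde L}(-1)$ with $\tilde L:=L/g^2$: the associated function $v(x)=x\hat f(x)=x/g(x)=1/h(x)$ is continuous and increasing, its inverse is $v^{\langle-1\rangle}=f_\mu$, so the $\Pi$-conjugate is $f^\ast(x)=f_\mu(x)/x$. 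Theorem \ref{thm:PiConj}$(i)$ then yields the equivalence $\hat f\in\Pi_{\tilde L}(-1)\Leftrightarrow f^\ast\in\Pi_{\tilde M}(1)$, with $\tilde M(x)=\tilde L(x f^\ast(x))f^\ast(x)^2=\tilde L(f_\mu(x))\,(f_\mu(x)/x)^2$.

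The main point, and the step I expect to require the most careful bookkeeping, is to simplify $\tilde M$ to the advertised $M$. Here I would use the inverse relation $h(f_\mu(y))=1/y$, which follows from $v^{\langle-1\rangle}=f_\mu$, to compute $g(f_\mu(x))=f_\mu(x)\,h(f_\mu(x))=f_\mu(x)/x$. Substituting this into $\tilde L(f_\mu(x))=L(f_\mu(x))/g(f_\mu(x))^2=L(f_\mu(x))\,x^2/f_\mu(x)^2$ makes the factors $f_\mu(x)^2/x^2$ cancel, so that $\tilde M(x)=L(f_\mu(x))=L\big((x-1)/S_\mu(-1/x)\big)=M(x)$, which is exactly the slowly varying function in the statement (and lies in $\mathcal{R}_0$ since $f_\mu\in\mathcal{R}_1$ tends to $+\infty$). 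The delicacy is purely in tracking which slowly varying function governs each $\Pi$-class through the conjugation.

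Finally I would transfer the conclusion from $f^\ast=f_\mu/x$ to $1/S_\mu(-1/x)$. Since $1/S_\mu(-1/x)=f_\mu(x)/(x-1)=\tfrac{x}{x-1}f^\ast(x)=f^\ast(x)\big(1+o(x^{-\eps})\big)$ for any $\eps\in(0,1)$, Lemma \ref{lem:simpleT}$(iii)$ gives $f^\ast\in\Pi_M(1)\Leftrightarrow 1/S_\mu(-1/x)\in\Pi_M(1)$. Chaining all these equivalences---Theorem \ref{thm:taubA1}, Remark \ref{rem:PiInv}, Theorem \ref{thm:PiConj}$(i)$ and Lemma \ref{lem:simpleT}$(iii)$, each running in both directions---proves the theorem.
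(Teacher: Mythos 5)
Your argument is correct and follows essentially the same route as the paper's proof: Theorem \ref{thm:taubA1} to pass to $-x\psi_\mu(-1/x)\in\Pi_L(1)$, Remark \ref{rem:PiInv} to flip the sign of the index, Theorem \ref{thm:PiConj} applied to $v(x)=-1/\psi_\mu(-1/x)$ with inverse $-1/\chi_\mu(-1/x)$, and finally the harmless factor $x/(x-1)$ absorbed via Lemma \ref{lem:simpleT}. The only difference is cosmetic: you carry out explicitly the cancellation showing $\tilde M(x)=L(f_\mu(x))$, which the paper compresses into ``after some transformations.''
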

\begin{remark}\label{rem:8}
Consider the case $m_1(\mu)<+\infty$. Then $1/S_\mu(-1/x)\to m_1(\mu)$ as $x\to+\infty$ and by the slow variation of $L$ we have $M(x)\sim L(x)$. Whence $\Pi_M(1)=\Pi_L(1)$. Further, in view of Remark \ref{rem:PiInv},  $x\mapsto1/S_\mu(-1/x)\in\Pi_L(1)$ is equivalent to 
\begin{align}\label{eq:todiff}
x\mapsto S_\mu\left(-\frac1x\right)\in\Pi_{L(\cdot)S_\mu\left(-1/\cdot\right)^2}(-1) = \Pi_L\left(-\frac{1}{m_1(\mu)^2}\right).
\end{align}
If $S_\mu^\prime$ is monotonic, we can use the Monotone Density Theorem (see \cite[Theorem 3.6.8]{BGT89}) and ``differentiate'' \eqref{eq:todiff}. In this way we prove the following reformulation of the condition in Theorem \ref{thm:alpha1}: \eqref{eq:a1} is equivalent to 
\[
S_\mu^{\prime}\left(-\frac1x\right) \frac{1}{x^2}\sim -\frac{1}{m_1(\mu)^2} \frac{L(x)}{x}.
\]
This equivalence will be proved as a special case of Theorem \ref{thm:p}.
\end{remark}
\begin{proof}[Proof of Theorem \ref{thm:alpha1}]
	By Theorem \ref{thm:taubA1}, the condition \eqref{eq:L} is equivalent to
	\begin{align}\label{eq:new1}
		-x \,\psi_\mu\left(-\frac 1x\right) = x\int_{[0,+\infty)} \frac{t}{t+x}\mu(\dd t)\in \Pi_L(1).
	\end{align}
	We shall show only that \eqref{eq:new1} implies \eqref{eq:a1}. The proof of the converse implication is similar.

	Consider the function $f(x)=-1/\left(x\,\psi_\mu(-1/x)\right)$. Since $1/f\in\Pi_L(1)$,  Remark \ref{rem:PiInv} implies that $f\in \Pi_{M_1}(-1)$ with
	\[
	M_1(x) = \frac{L(x)}{x^2 \psi_\mu(-1/x)^2}.
	\] 
	By definition of $f$ we have
	\begin{align*}
		-\frac{1}{\psi_\mu(-1/x)} =x f(x).
	\end{align*}
	Taking inverses of both sides of the above equation, gives
	\[
	-\frac{1}{\chi_\mu(-1/x)}=x f^\ast(x).
	\]
	where $f^\ast$ is $\Pi$-conjugate function for $f$ in the sense of Definition \ref{def:conj}. By Theorem \ref{thm:PiConj} $(i)$, we have $f^\ast\in \Pi_{M_2}(1)$, where for $M_2$ after some transformations we obtain 
	\begin{align*}
		M_2(x)=  M_1(x f^\ast(x)) f^\ast(x)^2=  L\left(-\frac{1}{\chi_\mu(-1/x)}\right).
	\end{align*}
	By the definition of the $S$-transform we have
	\[
	\frac{1}{S_\mu\left(-1/x\right)}=(x-1)^{-1}\frac{-1}{\chi_\mu\left(-1/x\right)}=\frac{x f^\ast(x)}{x-1} \sim_{\Pi_{M_2}} f^\ast(x)\in \Pi_{M_2}(1),
	\]
	since $x/(x-1)=1+o(x^{-\eps})$, $\varepsilon\in(0,1)$. 
\end{proof}

If the measure $\mu$ has the first moment (i.e. $m_1(\mu)<+\infty$) and $\bar{\mu}$ is regularly varying, then the behavior of the $S$-transform of $\mu$ is more delicate and different from the case $m_1(\mu)=+\infty$. Remarkably, independently of the value of $\alpha$ in $\eqref{eq:L}$, the asymptotic behavior of the $p$th derivative of the $S$-transform depends on the first moment $m_1(\mu)$ and and no other moments are involved. This observation will have some deeper consequences in Section \ref{sec:5}, Theorem \ref{thm:pat0}, where we investigate the behavior of the tail of the free multiplicative convolution.

\begin{thm}\label{thm:p}
	Let $p\in\mathbb{N}$, $\alpha\in[p,p+1]$ and $L\in \mathcal{R}_0$. 
	
(i) If one of the following conditions holds
\begin{enumerate}
	\item[(a)] $\alpha\in(p,p+1)$, or
	\item[(b)] $\alpha=p$ and $\int_1^{+\infty} L(t)/t\,\dd t<+\infty$,
\end{enumerate} 
then \eqref{eq:L}, i.e. $\bar{\mu}(x)\sim x^{-\alpha}L(x)$,
 is equivalent to 
\begin{align}\label{eq:Sp}
S_{\mu}^{(p)}\left(-\frac1x\right)\sim -\frac{\Gamma(\alpha+1)\Gamma(p+1-\alpha)}{m_1(\mu)^{\alpha+1}} x^{p+1-\alpha}L(x).
\end{align}

(ii) If $\alpha=p+1$ and $\int_1^{+\infty} L(t)/t\,\dd t=+\infty$, then \eqref{eq:L} is equivalent to 
\begin{align}\label{eq:Sp2}
x\mapsto S_{\mu}^{(p)}\left(-\frac1x\right) \in \Pi_L\left(-\frac{(p+1)!}{m_1(\mu)^{p+2}}\right).
\end{align}
\end{thm}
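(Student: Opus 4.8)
The plan is to prove both parts through a chain of equivalences
\[
\bar\mu(x)\sim x^{-\alpha}L(x)\ \Longleftrightarrow\ \psi_\mu^{(p+1)}(-1/x)\ \Longleftrightarrow\ \chi_\mu^{(p+1)}(-1/x)\ \Longleftrightarrow\ S_\mu^{(p)}(-1/x),
\]
passing from the tail to successive derivative conditions. Note first that in all cases $\mu\in\mathcal{M}_p$ by Remark \ref{rem:ap}, so $m_1(\mu)<+\infty$, and recall from the proof of Lemma \ref{lem:ConvergenceS} that then $\chi_\mu(z)\sim z/m_1(\mu)$ and $\chi_\mu'(z)\to 1/m_1(\mu)$ as $z\to0^-$. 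For the first link I use Lemma \ref{lem:psi1}, which gives
\[
\psi_\mu^{(p+1)}\left(-\tfrac1x\right)=(p+1)!\,x^{p+2}\int_{[0,+\infty)}\frac{t^{p+1}}{(x+t)^{p+2}}\mu(\dd t).
\]
In case (i), Corollary \ref{cor:taub0} shows \eqref{eq:L} is equivalent to $\psi_\mu^{(p+1)}(-1/x)\sim\alpha\,\Gamma(\alpha+1)\Gamma(p+1-\alpha)\,x^{p+1-\alpha}L(x)$; in case (ii), Theorem \ref{thm:taubPi} shows it is equivalent to $x\mapsto\psi_\mu^{(p+1)}(-1/x)\in\Pi_L\big((p+1)(p+1)!\big)$.

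For the middle link I invoke Lemma \ref{lem:Bruno}(ii) (valid since $m_p(\mu)<+\infty$), which yields
\[
\chi_\mu^{(p+1)}(z)=-\psi_\mu^{(p+1)}(\chi_\mu(z))\,\chi_\mu'(z)^{p+2}+O(1),\qquad z\to0^-,
\]
and its companion with $\psi_\mu$ and $\chi_\mu$ interchanged (Fa\`a di Bruno applied to $\chi_\mu(\psi_\mu(w))=w$), which gives the converse direction. Setting $z=-1/x$ and writing $\chi_\mu(-1/x)=-1/u(x)$ with $u(x)\sim m_1(\mu)x$, the regular variation (resp.\ de Haan property) of $y\mapsto\psi_\mu^{(p+1)}(-1/y)$ transfers to the argument $u(x)$ because $u(x)\sim m_1(\mu)x$ and $L$ is slowly varying, and multiplying by $\chi_\mu'(-1/x)^{p+2}\to m_1(\mu)^{-(p+2)}$ produces, in case (i),
\[
\chi_\mu^{(p+1)}\left(-\tfrac1x\right)\sim-\frac{\alpha\,\Gamma(\alpha+1)\Gamma(p+1-\alpha)}{m_1(\mu)^{\alpha+1}}\,x^{p+1-\alpha}L(x),
\]
and in case (ii) $x\mapsto\chi_\mu^{(p+1)}(-1/x)\in\Pi_L\big(-m_1(\mu)^{-(p+2)}(p+1)(p+1)!\big)$; the powers of $m_1(\mu)$ cancel exactly as needed.

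For the final link I use Corollary \ref{cor:RepS}, $S_\mu^{(p)}(-1/x)=\chi_\mu^{(p)}(-1/x)+k_p\stackrel{M}{\ast}\widetilde{\chi}_{p+1}(x)$, where $\chi_\mu^{(p)}(-1/x)$ has a finite limit by Lemma \ref{lem:Bruno} and is negligible. In case (i), Theorem \ref{thm:taub0} with $\gamma=p+1-\alpha$ and $\check{k}_p(\gamma)=1/(1+p-\gamma)=1/\alpha$ converts the asymptotics of $\widetilde{\chi}_{p+1}$ into those of the Mellin convolution and yields precisely the constant $-\Gamma(\alpha+1)\Gamma(p+1-\alpha)/m_1(\mu)^{\alpha+1}$ of \eqref{eq:Sp}. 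In case (ii), Theorem \ref{thm:tauber}(ii) applied with the kernel $k_p$ (for which $\check{k}_p(0)=1/(p+1)$) gives $S_\mu^{(p)}(-1/x)\in\Pi_L\big(-\tfrac{1}{p+1}\cdot m_1(\mu)^{-(p+2)}(p+1)(p+1)!\big)=\Pi_L\big(-(p+1)!/m_1(\mu)^{p+2}\big)$, which is \eqref{eq:Sp2}.

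Two points require care. First, the converse Tauberian implications (Theorems \ref{thm:taub0} and \ref{thm:tauber}) demand that $\widetilde{\chi}_{p+1}(x)=\chi_\mu^{(p+1)}(-1/x)$ be monotonic; this holds because the derivatives of $\chi_\mu$ alternate in sign, being the compositional inverse of $\psi_\mu$ all of whose derivatives are positive on $(-\infty,0)$ by Lemma \ref{lem:psi1}, so $\chi_\mu^{(p+2)}$ has constant sign and $\chi_\mu^{(p+1)}(-1/\cdot)$ is monotonic. Second, and this is the main obstacle, the $\Pi_L$-bookkeeping of case (ii) in the middle step forces me to control the \emph{rate} $\chi_\mu'(-1/x)\to1/m_1(\mu)$, since multiplying a $\Pi_L$-function by a factor tending to a constant only preserves the class through Lemma \ref{lem:simpleT}(iii), which requires an error $o(x^{-\eps})$. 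For $p\geq2$ this is immediate from $m_2(\mu)<+\infty$, whence $\chi_\mu''$ is bounded near $0$ and $\chi_\mu'(-1/x)=1/m_1(\mu)+O(1/x)$. In the boundary case $p=1$ I will break the apparent circularity by first extracting the crude a priori bound $\chi_\mu''(-1/x)=O(x^\delta)$ for every $\delta>0$ (from $\psi_\mu''(\chi_\mu(-1/x))\in\mathcal{R}_0$ and boundedness of $\chi_\mu'$), integrating it to get $\chi_\mu'(-1/x)=1/m_1(\mu)+O(x^{\delta-1})$, and only then applying Lemma \ref{lem:simpleT}; the additive finite-limit term $\chi_\mu^{(p)}(-1/x)$ is absorbed in the same way via Lemma \ref{lem:simpleT}(ii).
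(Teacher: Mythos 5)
Your architecture (tail $\leftrightarrow\psi_\mu^{(p+1)}\leftrightarrow\chi_\mu^{(p+1)}\leftrightarrow S_\mu^{(p)}$, with the correct constants at each stage) matches the paper's in spirit, and your Abelian direction is sound. But there is a genuine gap in the converse direction, exactly at the point you flag as requiring care: to pass from \eqref{eq:Sp} back to the asymptotics of $\chi_\mu^{(p+1)}$ you must deconvolve $k_p\stackrel{M}{\ast}\widetilde{\chi}_{p+1}$, and the Tauberian condition you invoke --- monotonicity of $\widetilde{\chi}_{p+1}$ --- is justified by the claim that the derivatives of $\chi_\mu$ alternate in sign because $\chi_\mu$ is the inverse of a function with all derivatives positive. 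That claim is false in general. Writing $\psi_n=\psi_\mu^{(n)}(\chi_\mu(z))$, the Fa\'a di Bruno relation gives $\chi_\mu'''(z)=\chi_\mu'(z)^4\bigl(3\psi_2^2/\psi_1-\psi_3\bigr)$, and as $z\to0^-$ the bracket tends to $12m_2(\mu)^2/m_1(\mu)-6m_3(\mu)$, whose sign depends on whether $2m_2(\mu)^2\gtrless m_1(\mu)m_3(\mu)$; e.g.\ for $\mu=(1-\eps)\delta_1+\eps\delta_{b}$ with $\eps b^2=1$ and $\eps$ small one gets $\chi_\mu'''<0$ near $0^-$, breaking the alternation already at the third derivative. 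So the monotonicity of $x\mapsto\chi_\mu^{(p+1)}(-1/x)$ is not established, and with it the key Tauberian converse collapses. (The paper sidesteps this precisely: it does \emph{not} deconvolve at the level of $\chi_\mu^{(p+1)}$, but instead pushes the Mellin convolution through the Bruno identity, $k_p\stackrel{M}{\ast}\widetilde{\chi}_{p+1}\sim -m_1(\mu)^{-p-2}\,k_p\stackrel{M}{\ast}\psi_\mu^{(p+1)}(\chi_\mu(-1/\cdot))$, and applies the Tauberian converse to $f(x)=\psi_\mu^{(p+1)}(\chi_\mu(-1/x))$, which is monotone simply as a composition of the monotone functions $\psi_\mu^{(p+1)}$ and $\chi_\mu$.)

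A secondary issue: you open by asserting $\mu\in\mathcal{M}_p$ ``by Remark \ref{rem:ap}'', but that remark deduces $\mu\in\mathcal{M}_p$ from the tail condition \eqref{eq:L}, which is not available when you start from \eqref{eq:Sp} or \eqref{eq:Sp2}. In the converse direction you must first extract $m_p(\mu)<+\infty$ from the $S$-transform hypothesis alone (the paper does this via the one-sided Taylor expansion of Lemma \ref{lem:expansion} combined with Lemma \ref{lem:ConvergenceS}); without it, Lemma \ref{lem:Bruno}$(ii)$ and the finiteness of $\chi_\mu^{(p)}(0^-)$ are not yet licensed. Your $p=1$ workaround for the rate of $\chi_\mu'(-1/x)\to1/m_1(\mu)$ is reasonable once these prerequisites are in place, but the proof as written does not close without repairing the monotonicity step, and the natural repair is to reorganize the converse direction along the paper's lines rather than to salvage monotonicity of $\chi_\mu^{(p+1)}$.
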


\begin{remark}
	Recall the notion of one-sided Taylor expansion from Lemma \ref{lem:expansion}.
	Assume that \eqref{eq:Sp} holds with $\alpha\in(p,p+1)$. Then, the remainder term $r(z)$ in series expansion of $S_\mu(z)=\sum_{n=0}^{p-1}s_n z^n + r(z)$ is asymptotically equivalent to (use e.g. \cite[Proposition 1.5.10]{BGT89})
	\begin{align*}
	(-1)^{p+1}\frac{\Gamma(1+\alpha)\Gamma(p+1-\alpha)}{m_1(\mu)^{\alpha+1}\prod_{k=1}^p(\alpha-k)}(-z)^{\alpha-1}L\left(-\frac1z\right)	
	=\frac{\pi\alpha}{m_1(\mu)^{\alpha+1}\sin\left(\pi\alpha\right)}(-z)^{\alpha-1}L\left(-\frac1z\right),
	\end{align*}
	where we have used Euler's reflection formula
	\[
	\Gamma(p+1-\alpha) = \frac{\pi}{\sin\left(\pi(p+1-\alpha)\right)}\frac{1}{\Gamma(\alpha-p)}=(-1)^{p+1}\frac{\pi}{\sin\left(\pi\alpha\right)}\frac{1}{\Gamma(\alpha-p)}
	\]
	and standard properties of the gamma function.
\end{remark}
\begin{proof}[Proof of Theorem~\ref{thm:p}]
	We start with $(i)$ and present only the proof from \eqref{eq:Sp} to \eqref{eq:L}. The proof of the converse implication goes along the same steps.
	
	First, let us note that thanks to Remark \ref{rem:ap} condition \eqref{eq:L} under our assumptions implies that $m_p(\mu)<+\infty$.  Moreover, under considered assumptions, condition \eqref{eq:Sp} alone also implies $m_p(\mu)<+\infty$.  Indeed, if $\alpha\in(p,p+1)$, then under \eqref{eq:Sp} we have $S_\mu^{(p)}(z)=o(z^{-\varepsilon})$ for $\varepsilon\in(1+p-\alpha,1)$. Then, 
Lemma \ref{lem:expansion} with $f=S_\mu$ and $n=0$ implies \eqref{eq:Sseries}, that is,
\[
S_\mu(z) = \sum_{n=0}^{p-1} s_n z^n + o(z^{p-1+\eps}),\qquad \mbox{as }z\to 0^-
\]
for some real coefficients $(s_n)_n$.
		If $\alpha=p$, thanks to the assumption $\int_1^{+\infty} L(t)/t\,\dd t<+\infty$, 
		we can still integrate \eqref{eq:Sp} to obtain \eqref{eq:Sseries}.
		Thus, by Lemma \ref{lem:ConvergenceS} we obtain $m_p(\mu)<+\infty$ for $\alpha\in[p,p+1)$.

	Recall that kernel $k_p$ is defined in \eqref{def:kp} and we denote $\widetilde{\chi}_{p+1}(x):=\chi_\mu^{(p+1)}(-1/x)$.
	
	\textsc{Claim 1.} $k_p\stackrel{M}{\ast} \widetilde{\chi}_{p+1}(x) \sim -{\Gamma(\alpha+1)\Gamma(p+1-\alpha)}{m_1(\mu)^{-1-\alpha}} x^{p+1-\alpha} L(x).$\\
	The r.h.s. of \eqref{eq:Sp} diverges to infinity. Moreover, $m_p(\mu)<+\infty$ implies that $\chi_\mu^{(p)}(-1/x)$ has a finite limit as $x\to+\infty$. Thus, the claim follows from Corollary \ref{cor:RepS}.
	
	\textsc{Claim 2.} $k_p\stackrel{M}{\ast} \widetilde{\chi}_{p+1}(x)\sim -k_p\stackrel{M}{\ast}\widetilde{\psi}_{p+1}(x)$ for $\widetilde{\psi}_{p+1}(x) := \psi_\mu^{(p+1)}(\chi_\mu(-1/x)) \chi_\mu^\prime(-1/x)^{p+2}$.\\
	For any $c\in\R$ we have $k_p\stackrel{M}{\ast}(c+o(1))=c+o(1)$. Thus, Lemma \ref{lem:Bruno} $(ii)$ implies that
	\[
	k_p\stackrel{M}{\ast}\widetilde{\psi}_{p+1}(x) = -k_p\stackrel{M}{\ast} \widetilde{\chi}_{p+1}(x) + c+o(1)
	\]
	for some $c\in\R$. 
	
	\textsc{Claim 3.} $k_p\stackrel{M}{\ast}\widetilde{\psi}_{p+1}(x) \sim  m_1(\mu)^{-p-2} k_p\stackrel{M}{\ast}\psi_\mu^{(p+1)}\left(\chi_\mu\left(-1/\cdot\right)\right)(x).$\\
	The claim follows from Lemma \ref{lem:simpleT} $(i)$ and the fact that 
	\[
	\widetilde{\psi}_{p+1}(x)\sim \frac{1}{m_1(\mu)^{p+2}} \psi_\mu^{(p+1)}\left(\chi_\mu\left(-\frac1x\right)\right).
	\]
	
	\textsc{Claim 4.} $\psi_\mu^{(p+1)}\left(\chi_\mu\left(-1/x\right)\right) 
	\sim  \alpha \Gamma(\alpha+1)\Gamma(p+1-\alpha) \left(m_1(\mu) x\right)^{p+1-\alpha} L(x).$\\
	Denote $f(x) =\psi_\mu^{(p+1)}\left(\chi_\mu\left(-1/x\right)\right)$. The three claims above give that
	\begin{align*}
		k_p\stackrel{M}{\ast}f(x) &\sim -  m_1(\mu)^{p+2} k_p\stackrel{M}{\ast}\widetilde{\chi}_{p+1}(x) 
	\end{align*}
	and that the r.h.s above is regularly varying with index $p+1-\alpha$.
	Since $\psi_\mu^{(p+1)}$ and $\chi_\mu$ are both monotonic, function  $f$ is also monotonic (by Lemma \ref{lem:psi1}). Thus, by Theorem \ref{thm:taub0} with $\gamma = p+1-\alpha$, we have 
	\begin{align*}
		\psi_\mu^{(p+1)}\left(\chi_\mu\left(-\frac1x\right)\right) =f(x)
		\sim  - m_1(\mu)^{p+2} \frac{k_p\stackrel{M}{\ast}\widetilde{\chi}_{p+1}(x)}{\check{k}_p(p+1-\alpha)}.
	\end{align*}
	We obtain the assertion after plugging \eqref{eq:kpcheck} and using Claim 1.
	
	\textsc{Claim 5.} $\psi_\mu^{(p+1)}\left(-1/y\right)\sim \alpha \Gamma(\alpha+1)\Gamma(p+1-\alpha) y^{p+1-\alpha}L(y).$\\
	Let us substitute $-1/x = \psi_\mu(-1/y)$ into Claim 4. Observe that we have $x\to+\infty$ if and only if $y\to+\infty$. Moreover since $\psi_\mu(-1/y)\sim -m_1(\mu)/y$ we get that
	\[
	m_1(\mu)x = - \frac{m_1(\mu)}{\psi_\mu(-1/y)} \sim y
	\]
	and by the Uniform Convergence Theorem \cite[Theorem 1.2.1]{BGT89} we obtain
	\[
	L(x) = L\left( -\frac{1}{\psi_\mu(-1/y)}\right)\sim L(y).
	\]

	\textsc{Claim 6.} Condition \eqref{eq:L} holds.\\
	By Lemma \ref{lem:psi1}, we have for $y>0$,
	\begin{align}\label{eq:Psipp1}
		\psi_\mu^{(p+1)}\left(-\frac1y\right)= (p+1)!\, y^{p+2}\,\int_{[0,+\infty)}  \frac{ t^{p+1}}{(t+y)^{p+2}}\mu(\dd t).
	\end{align}
	Thus using Claim 5 and Corollary \ref{cor:taub0} we finally get \eqref{eq:L}.

	The proof of $(ii$) works similarly, but the technical details are a bit more complicated.
	First observe that similarly as in $(i)$, both \eqref{eq:L} and \eqref{eq:Sp2} under $(ii)$ imply $m_p(\mu)<+\infty$. The implication from \eqref{eq:Sp2} follows from the fact that for $c>0$ we have $\Pi_L(c)\subset \mathcal{R}_0$ and thus $S_\mu^{(p)}(-1/x)\sim {-} \ell\left(x\right)=o(x^{\varepsilon})$ for $\ell\in \mathcal{R}_0$ and all $\varepsilon>0$. Hence, the argument used in $(i)$ works here as well.

	Again, we present only the harder part of the proof, which goes from \eqref{eq:Sp2} to \eqref{eq:L}.
	
	\textsc{Claim 1.} $k_p\stackrel{M}{\ast}\widetilde{\chi}_{p+1}\in \Pi_L\left(-(p+1)!m_1(\mu)^{-p-2}\right)$.\\

From Corollary \ref{cor:RepS} we have 
\[
S_\mu^{(p)}\left(-\frac1x\right)= \chi_\mu^{(p)}\left(-\frac1x\right)+k_p\stackrel{M}{\ast} \widetilde{\chi}_{p+1}(x).
\]
In view of Lemma \ref{lem:simpleT} $(ii)$ it is enough to show that \eqref{eq:Sp2} implies
\begin{align}\label{eq:chipnew}
	\chi_\mu^{(p)}(-1/x)=\chi_\mu^{(p)}(0^-)+o(x^{-\eps})
\end{align}
for some $\eps>0$. 
Condition \eqref{eq:Sp2} implies that for $\ell(x):=-S_\mu^{(p)}(-1/x)$ we have $\ell\in\mathcal{R}_0$. By Lemma \ref{lem:expansion} we have for $n\in\{0,\ldots,p-1\}$, $S_\mu^{(n)}(z)=S_\mu^{(n)}(0^-)+o(z^\eps)$.
After rearranging \eqref{eq:chip} we get 
\begin{align*}
	\chi^{(p)}_\mu(z) 
	=\frac{z}{1+z}S_\mu^{(p)}(z)+p! \sum_{n=0}^{p-1} \frac{S_\mu^{(n)}(z)}{n!} (-1-z)^{n-p-1}
\end{align*}
Clearly, all terms above are of the form $c+o(z^\varepsilon)$ for some $c\in\R$ and any $\varepsilon\in(0,1)$. Hence, we have \eqref{eq:chipnew} and Claim 1 follows.

	\textsc{Claim 2.} $k_p\stackrel{M}{\ast}\widetilde{\psi}_{p+1}\in \Pi_L\left((p+1)!m_1(\mu)^{-p-2}\right),$ \\where again  $\widetilde{\psi}_{p+1}(x) := \psi_\mu^{(p+1)}(\chi_\mu(-1/x)) \chi_\mu^\prime(-1/x)^{p+2}$.\\

	By Lemma \ref{lem:Bruno} $(i)$ we have 
		\[
	\widetilde{\chi}_{p+1}(x)+\widetilde{\psi}_{p+1}(x)=Q(-1/x),  
	\]
	where $Q$ is a polynomial in $\psi_\mu^{(l)}(\chi_\mu(-1/x))$ and $\chi_\mu^{(l)}(-1/x)$ for $l=1,\ldots,p$. Arguing as in Claim~1, we infer that $\psi_\mu^{(l)}(z)$ and $\chi_\mu^{(l)}(z)$ are all of the form $c+o(z^\varepsilon)$ for $\varepsilon\in(0,1)$. Under these circumstances, \eqref{eq:BrunoLimit} can be strengthened to
	\[
	\widetilde{\chi}_{p+1}(x)+\widetilde{\psi}_{p+1}(x)=c+o(x^{-\eps}),
	\]
	where $c\in\R$.  Claim~1 and Lemma \ref{lem:simpleT} $(ii)$  conclude the proof of this claim.

	\textsc{Claim 3.}
	$k_p\stackrel{M}{\ast}\psi_\mu^{(p+1)}\left(\chi_\mu\left(-1/\cdot\right)\right)\in  \Pi_L\left((p+1)!\right)$.\\
	Eq. \eqref{eq:chipnew} which we already established, upon repeated integration, implies that 
	\[
	\chi_\mu^\prime\left(-\frac1x\right)= \chi_\mu^{\prime}(0^-)+o(x^{-\eps}) =  \frac{1}{m_1(\mu)}+o(x^{-\eps}).
	\]
	Thus, here we get
	\[
	\widetilde{\psi}_{p+1}(x)=\psi_\mu^{(p+1)}\left(\chi_\mu\left(-\frac1x\right)\right)\left(\frac{1}{m_1(\mu)^{p+2}}+o(x^{-\eps}) \right).
	\]
	Now we use Lemma \ref{lem:simpleT} $(iii)$.
	
	\textsc{Claim 4.} $\psi_\mu^{(p+1)}\left(\chi_\mu\left(-1/\cdot\right)\right) \in \Pi_L\left((p+1)! (p+1)\right)$.\\
	Since $\psi_\mu^{(p+1)}(\chi_\mu(-1/\cdot))$ is monotonic, by Theorem \ref{thm:tauber} $(ii)$ applied to Claim 3 we infer that 
	\[
	\psi_\mu^{(p+1)}\left(\chi_\mu\left(-\frac1\cdot\right)\right) \in \Pi_L\left(\frac{(p+1)!}{\check{k}(0)}\right).
	\]
	By \eqref{eq:kpcheck} we have $\check{k}(0)=1/(1+p)$.
	
	\textsc{Claim 5.} $\psi_\mu^{(p+1)}\left(-1/\cdot\right) \in \Pi_L\left((p+1)! (p+1)\right)$.\\
	By Claim 4 and Theorem \ref{thm:PiL} we have
	\[
	\psi_\mu^{(p+1)}\left(\chi_\mu\left(-\frac1x\right)\right) =C+c_p\int_{x_0}^x(1+o(1))\frac{L(t)}{t} \dd t+d(1+o(1))L(x)
	\]
	where $c_p=(p+1)!(p+1)$, $x_0\geq0$ and $C, d$ are real constants.
	Setting $x=-1/\psi_\mu(-1/y)$ we have
	\begin{align*}
		\psi_\mu^{(p+1)}\left(-\frac{1}{y}\right) &=C+c_p\int_{x_0}^{\frac{-1}{\psi_\mu(-1/y)}}(1+o(1))\frac{L(t)}{t} \dd t+d(1+o(1))L\left(\frac{-1}{\psi_\mu(-1/y)}\right)  \end{align*}
	observe that since $\psi_\mu(-1/s)\sim -m_1(\mu)/s$, by the Uniform Convergence Theorem, we have $L\left({-1}/{\psi_\mu(-1/y)}\right) = L(y)(1+o(1))$.
	Further, substituting $t=-1/\psi_\mu(-1/s)$ in the integral above we obtain
	\begin{align*}
		\int_{\frac{-1}{\chi_\mu(-1/x_0)}}^{y}(1+o(1))\frac{L(-1/\psi_\mu(-1/s))}{-1/\psi_\mu(-1/s)} \frac{\psi_\mu^\prime(-1/s)}{\psi_\mu(-1/s)^2}\frac{1}{s^2}\dd s.
	\end{align*}
	Since $\psi_\mu(-1/s)\sim -m_1(\mu)/s$ and  $\psi_\mu^\prime(-1/s)\sim m_1(\mu)$, the integrand equals
	\[
	(1+o(1))\frac{L(s)}{s}.
	\] 
	Claim follows from another use of Theorem \ref{thm:PiL}.

	\textsc{Claim 6.} Condition \eqref{eq:L} holds.\\
	By \eqref{eq:Psipp1} and Theorem \ref{thm:taubPi} we obtain \eqref{eq:L}.
\end{proof}

\section{Proof of Theorem \ref{Introthm:1.4}}\label{sec:5}

This section is devoted to the proof of Theorem \ref{Introthm:1.4} announced in the Introduction. For reader's convenience we repeat the statement, see  Theorem \ref{thm:pat0} below.

First we determine roughly speaking how the tail of a measure changes when the $S$-transform is asymptotically multiplied by a constant $c>0$.

\begin{Corollary}\label{cor:first0}
	Assume that $\mu\in\mathcal{M}_+$ is such that
	\begin{align}\label{eq:newL}
		\bar{\mu}(x)\sim x^{-\alpha}L(x).
	\end{align}
	where $\alpha\in(0,1)$. 
	Let $c>0$, $\nu\in\mathcal{M}_+$ and assume that
	\[
	S_\nu\left(-\frac1x\right)\sim c\, S_\mu\left(-\frac1x\right).
	\]
	Then
	\[
	\bar{\nu}(x)\sim c^{-\alpha} \bar{\mu}(x).
	\]
\end{Corollary}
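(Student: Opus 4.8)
The plan is to reduce everything to Theorem~\ref{thm:01}, which is an equivalence translating regular variation of the right tail with index $\alpha\in(0,1)$ into a precise asymptotic for the $S$-transform at $0^-$. First I would feed the hypothesis \eqref{eq:newL} into the forward direction of Theorem~\ref{thm:01} to obtain
\[
S_\mu\left(-\frac1x\right)\sim \left(\frac{\sin(\pi\alpha)}{\pi\alpha}\right)^{1/\alpha}\frac{x^{1-1/\alpha}}{M^\#(x^{1/\alpha})},\qquad M:=L^{-1/\alpha}\in\mathcal{R}_0,
\]
where $M^\#$ is a de Bruijn conjugate of $M$. Multiplying by the constant $c$ and invoking the assumption $S_\nu(-1/x)\sim c\,S_\mu(-1/x)$ then gives
\[
S_\nu\left(-\frac1x\right)\sim c\left(\frac{\sin(\pi\alpha)}{\pi\alpha}\right)^{1/\alpha}\frac{x^{1-1/\alpha}}{M^\#(x^{1/\alpha})}.
\]

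The key point will be to recognise the right-hand side as the $S$-transform asymptotic that Theorem~\ref{thm:01} prescribes for a measure whose tail is $x^{-\alpha}\tilde L(x)$, with the candidate $\tilde L:=c^{-\alpha}L$. Setting $\tilde M:=\tilde L^{-1/\alpha}=cM$, the whole argument hinges on the elementary fact that the de Bruijn conjugate scales inversely under multiplication by a positive constant, namely
\[
(cM)^\#(y)\sim c^{-1}M^\#(y),\qquad y\to+\infty.
\]
This is the only genuine computation, and I expect it to be the main (though minor) obstacle. By the defining relation of the de Bruijn conjugate it suffices to verify that $c^{-1}M^\#$ works as a conjugate of $cM$: one has $cM(x)\cdot c^{-1}M^\#(cxM(x))=M(x)M^\#(cxM(x))$, and since $M^\#\in\mathcal{R}_0$ while $xM(x)\to+\infty$ (a slowly varying $M$ satisfies $M(x)\geq x^{-\delta}$ eventually, so $xM(x)\geq x^{1-\delta}\to+\infty$), slow variation yields $M^\#(cxM(x))\sim M^\#(xM(x))$; hence the product tends to $M(x)M^\#(xM(x))\to1$. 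Uniqueness of the de Bruijn conjugate up to asymptotic equivalence (Theorem~\ref{thm:conj}) then gives the scaling relation.

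With $\tilde M^\#\sim c^{-1}M^\#$ established, the asymptotic for $S_\nu$ rewrites as
\[
S_\nu\left(-\frac1x\right)\sim\left(\frac{\sin(\pi\alpha)}{\pi\alpha}\right)^{1/\alpha}\frac{x^{1-1/\alpha}}{\tilde M^\#(x^{1/\alpha})},
\]
which is precisely condition \eqref{eq:S01} of Theorem~\ref{thm:01} for $\nu$ with slowly varying function $\tilde L=\tilde M^{-\alpha}$. Applying the converse implication of Theorem~\ref{thm:01} therefore yields
\[
\bar\nu(x)\sim x^{-\alpha}\tilde L(x)=c^{-\alpha}x^{-\alpha}L(x)=c^{-\alpha}\bar\mu(x),
\]
since $\tilde L=(cM)^{-\alpha}=c^{-\alpha}(L^{-1/\alpha})^{-\alpha}=c^{-\alpha}L$. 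Beyond the scaling lemma, the only bookkeeping to watch is applying the conjugation consistently to the correct function $M=L^{-1/\alpha}$; everything else is a direct two-way use of the equivalence in Theorem~\ref{thm:01}.
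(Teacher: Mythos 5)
Your proposal is correct and follows essentially the same route as the paper: both feed \eqref{eq:newL} through the forward direction of Theorem~\ref{thm:01}, use the constant-scaling behaviour of the de Bruijn conjugate (the paper phrases it as $N_c:=c^{-1}M^\#$ having conjugate $\sim c\,M$, you as $(cM)^\#\sim c^{-1}M^\#$ — the same fact, which you verify correctly from the defining relation), and then apply the converse direction to $\nu$ with $\tilde L=c^{-\alpha}L$. No gaps; your explicit check of the conjugate scaling is the only detail the paper leaves as ``easily seen from the definition.''
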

\begin{proof}
	For $M=L^{-1/\alpha}\in \mathcal{R}_0$ define $N_c := c^{-1}M^\#$. Observe that de Bruijn conjugate $N_c^\#$ to a slowly varying function $N_c$ is asymptotically equivalent to $c\,M=c\,L^{-1/\alpha}$, which is easily seen from the definition. In view of \eqref{eq:newL}, by Theorem \ref{thm:01} we have
	\[
	S_{\nu}\left(-\frac1x\right)\sim \left( \frac{\sin(\pi \alpha)}{\pi \alpha} \right)^{1/\alpha}\frac{x^{1-1/\alpha}}{ N_c(x^{1/\alpha})}.
	\] 
	Using the converse implication from Theorem \ref{thm:01} we obtain that
	\[
	\bar{\nu}(x)\sim x^{-\alpha} N_c^\#(x)^{-\alpha}\sim c^{-\alpha}x^{-\alpha} L(x).
	\]
\end{proof}

\begin{Corollary}\label{cor:firstA0}
	Assume that $\mu\in\mathcal{M}_+$ is such that $\bar{\mu}\in \mathcal{R}_0$. Let $c>0$, $\nu\in\mathcal{M}_+$ and assume that
	\begin{align}\label{eq:SKR}
		S_\nu\left(-\frac1x\right)\sim c\, S_\mu\left(-\frac1x\right).
	\end{align}
	Then
	\[
	\bar{\nu}(x)\sim \bar{\mu}(x).
	\]
\end{Corollary}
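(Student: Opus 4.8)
The plan is to run the $\alpha=0$ analogue of the argument behind Corollary~\ref{cor:first0}, replacing the de~Bruijn machinery of Theorem~\ref{thm:01} by the $K\mathcal{R}_\infty$ description of Theorem~\ref{thm:a0}. The whole point is that for $\alpha=0$ the tail is recovered from the $S$-transform through the compositional inverse of a rapidly varying function, and such an inverse is slowly varying and therefore blind to a constant factor in its argument; this is exactly why the exponent $c^{-\alpha}$ of Corollary~\ref{cor:first0} degenerates to $c^{0}=1$ in this regime.

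First I would unpack both hypotheses through Theorem~\ref{thm:a0}. Applying it to $\mu$, the assumption $\bar{\mu}\in\mathcal{R}_0$ gives both $x\mapsto 1/S_\mu(-1/x)\in K\mathcal{R}_\infty$ and the representation $\bar{\mu}(x)\sim 1/f_\mu^{\langle-1\rangle}(x)$, where $f_\mu(x)=(x-1)/S_\mu(-1/x)$. Next I would transfer the $K\mathcal{R}_\infty$ property to $\nu$: from \eqref{eq:SKR} we have $1/S_\nu(-1/x)\sim c^{-1}/S_\mu(-1/x)$, and $K\mathcal{R}_\infty$ is stable under asymptotic equivalence up to a positive constant. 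This is immediate from Definition~\ref{def:KR}: if $g\sim c^{-1}h$ with $h\in K\mathcal{R}_\infty$, then for every $\varepsilon>0$ and all large $x$ and $\lambda\geq 1$,
\[
\frac{g(\lambda x)}{\lambda^d g(x)}\geq \frac{c^{-1}-\varepsilon}{c^{-1}+\varepsilon}\,\frac{h(\lambda x)}{\lambda^d h(x)},
\]
so taking $\inf_{\lambda\geq1}$, then $\liminf_{x\to+\infty}$, then $\varepsilon\to0^+$ gives the required bound $\geq 1$. Hence $x\mapsto 1/S_\nu(-1/x)\in K\mathcal{R}_\infty$, and Theorem~\ref{thm:a0} applied to $\nu$ yields $\bar{\nu}(x)\sim 1/f_\nu^{\langle-1\rangle}(x)$ with $f_\nu(x)=(x-1)/S_\nu(-1/x)\sim c^{-1}f_\mu(x)$.

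It then remains to show $f_\mu^{\langle-1\rangle}(y)\sim f_\nu^{\langle-1\rangle}(y)$, which is the heart of the matter. Both $f_\mu$ and $f_\nu$ are positive, continuous and strictly increasing to $+\infty$ (by positivity and monotonicity of the $S$-transform, Proposition~\ref{thm:useful}(ii)), so they are genuinely invertible; moreover, since they lie in $K\mathcal{R}_\infty$, Theorem~\ref{thm:KR}(i) shows that $f_\mu^{\langle-1\rangle}$ and $f_\nu^{\langle-1\rangle}$ are slowly varying. Writing $x=f_\mu^{\langle-1\rangle}(y)$ and using $f_\nu\sim c^{-1}f_\mu$ at the point $x\to+\infty$, I get $f_\nu\big(f_\mu^{\langle-1\rangle}(y)\big)=c^{-1}y\,(1+o(1))$; applying $f_\nu^{\langle-1\rangle}$ gives $f_\mu^{\langle-1\rangle}(y)=f_\nu^{\langle-1\rangle}\big(c^{-1}y(1+o(1))\big)$. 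Since $f_\nu^{\langle-1\rangle}\in\mathcal{R}_0$ and the argument is $y$ times a factor converging to $c^{-1}\in(0,+\infty)$, the Uniform Convergence Theorem \cite[Theorem~1.2.1]{BGT89} gives $f_\nu^{\langle-1\rangle}\big(c^{-1}y(1+o(1))\big)\sim f_\nu^{\langle-1\rangle}(y)$, whence $f_\mu^{\langle-1\rangle}(y)\sim f_\nu^{\langle-1\rangle}(y)$ and therefore $\bar{\nu}(x)\sim 1/f_\nu^{\langle-1\rangle}(x)\sim 1/f_\mu^{\langle-1\rangle}(x)\sim\bar{\mu}(x)$. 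The only delicate point is this final step — that the constant factor in $f_\nu\sim c^{-1}f_\mu$ evaporates under inversion — and it is precisely handled by the slow variation of the inverse together with the Uniform Convergence Theorem.
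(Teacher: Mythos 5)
Your proof is correct and follows essentially the same route as the paper's: both arguments run Theorem \ref{thm:a0} in each direction, transfer membership in $K\mathcal{R}_\infty$ from $\mu$ to $\nu$, and use the slow variation of $f^{\langle-1\rangle}$ (via Theorem \ref{thm:KR}$(i)$ and the Uniform Convergence Theorem) to absorb the constant $c$. The only difference is cosmetic: you verify the $K\mathcal{R}_\infty$ transfer directly from Definition \ref{def:KR} and spell out why the constant evaporates under inversion, whereas the paper passes through the compositional inverses first and compresses that step into $f_\nu^{\langle-1\rangle}(x)\sim f_\mu^{\langle-1\rangle}(cx)\sim f_\mu^{\langle-1\rangle}(x)$.
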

\begin{proof}
	Let us define  $f_\sigma(x)=(x-1)/S_\sigma(-1/x)$  for $\sigma=\mu,\nu$. By Theorem \ref{thm:a0} and Remark \ref{rem:KR} we have that $f_\mu\in K\mathcal{R}_\infty$. It is easy to verify that assumptions of Theorem \ref{thm:KR} $(i)$ are satisfied by $f_\mu^{\langle-1\rangle}$ (note that in Theorem \ref{thm:KR} as $f$ we take the function $f_\mu^{\langle-1\rangle}$) thus $f_\mu^{\langle-1\rangle}$ is slowly varying. Thus, by \eqref{eq:SKR} we have
	\[
	f_\nu^{\langle-1\rangle}(x)\sim f_\mu^{\langle-1\rangle}(c x)\sim f_\mu^{\langle-1\rangle}(x).
	\] 
	Applying once again Theorem \ref{thm:KR} for $f^{\langle-1\rangle}_\nu$ we obtain that $f_\nu\in K\mathcal{R}_\infty$. Another application of Remark \ref{rem:KR} together with the converse implication of Theorem \ref{thm:a0} yields the result.
\end{proof}

\begin{Corollary}\label{cor:firstA1}
	Assume that $\mu\in\mathcal{M}_+$ is such that
	\begin{align*}
		\bar{\mu}(x)\sim L(x)/x,
	\end{align*}
where  $\int_1^{+\infty} L(t)/t\,\dd t=+\infty$.
	Let $c>0$, $\nu\in\mathcal{M}_+$ and assume that
	\[
	S_\nu\left(-\frac1x\right)= c\, S_\mu\left(-\frac1x\right)+o(x^{-\eps})
	\]
	for some $\eps>0$.
	Then
	\[
	\bar{\nu}(x)\sim c^{-1} \bar{\mu}(x).
	\]
\end{Corollary}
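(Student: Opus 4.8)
The plan is to reduce everything to the $\alpha=1$ characterization of Theorem \ref{thm:alpha1} and then transport the hypothesis on the $S$-transforms through the reciprocal. Since $\int_1^{+\infty} L(t)/t\,\dd t=+\infty$, Remark \ref{rem:ap} (case (b$'$)) places $\mu$ in $\mathcal{M}_0$, so $m_1(\mu)=+\infty$. Writing $g_\mu(x):=1/S_\mu(-1/x)$ and $M_\mu(x):=L\big((x-1)/S_\mu(-1/x)\big)\in\mathcal{R}_0$, Theorem \ref{thm:alpha1} says the tail hypothesis on $\mu$ is equivalent to $g_\mu\in\Pi_{M_\mu}(1)$. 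In particular $g_\mu\in\mathcal{R}_0$, so $S_\mu(-1/\cdot)=1/g_\mu$ is slowly varying and, by Proposition \ref{thm:useful}(ii)--(iii), decreases to $m_1^{-1}(\mu)=0$. This slow variation is what allows the additive error term to be absorbed.

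First I would rewrite the hypothesis multiplicatively. Because $S_\mu(-1/\cdot)\in\mathcal{R}_0$, we have $x^{\varepsilon/2}S_\mu(-1/x)\to+\infty$, hence $S_\mu(-1/x)\geq x^{-\varepsilon/2}$ for large $x$ and $o(x^{-\varepsilon})/S_\mu(-1/x)=o(x^{-\varepsilon/2})$. Therefore
\[
S_\nu\!\left(-\tfrac1x\right)=c\,S_\mu\!\left(-\tfrac1x\right)\big(1+o(x^{-\varepsilon/2})\big),
\]
and taking reciprocals gives $g_\nu(x)=g_\mu(x)\big(c^{-1}+o(x^{-\varepsilon/2})\big)$, where $g_\nu(x):=1/S_\nu(-1/x)$. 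Lemma \ref{lem:simpleT}(iii) (with $c_0=c^{-1}$) then yields $g_\nu\in\Pi_{M_\mu}(c^{-1})$.

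Next I would identify the correct slowly varying function for $\nu$ and renormalize the $\Pi$-index back to $1$. Set $\tilde L:=c^{-1}L$, which still satisfies $\int_1^{+\infty}\tilde L(t)/t\,\dd t=+\infty$, and let $M_\nu(x):=\tilde L\big((x-1)/S_\nu(-1/x)\big)$. From $g_\nu(x)\sim c^{-1}g_\mu(x)$ we get, with $y:=(x-1)/S_\mu(-1/x)\to+\infty$, that $(x-1)/S_\nu(-1/x)=c^{-1}y\,(1+o(1))$; slow variation of $L$ and the Uniform Convergence Theorem \cite[Theorem 1.2.1]{BGT89} give $L(c^{-1}y(1+o(1)))\sim L(y)$, so $M_\nu(x)\sim c^{-1}M_\mu(x)$. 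Consequently, using $g_\nu\in\Pi_{M_\mu}(c^{-1})$ and $M_\mu\sim cM_\nu$,
\[
\frac{g_\nu(\lambda x)-g_\nu(x)}{M_\nu(x)}=\frac{g_\nu(\lambda x)-g_\nu(x)}{M_\mu(x)}\cdot\frac{M_\mu(x)}{M_\nu(x)}\longrightarrow \frac1c\log\lambda\cdot c=\log\lambda,
\]
so $g_\nu\in\Pi_{M_\nu}(1)$. Applying the converse implication of Theorem \ref{thm:alpha1} to $\nu$ with the slowly varying function $\tilde L$ gives $\bar\nu(x)\sim x^{-1}\tilde L(x)=c^{-1}x^{-1}L(x)\sim c^{-1}\bar\mu(x)$, as claimed.

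The main obstacle I anticipate is the passage from the additive perturbation $o(x^{-\varepsilon})$ in the hypothesis to a genuinely multiplicative $(1+o(1))$ perturbation of $g_\mu$: in contrast to the case $\alpha\in(0,1)$ (Corollary \ref{cor:first0}), the relevant transform $S_\mu(-1/\cdot)$ is only slowly varying rather than of a definite power order, and it is precisely this slow variation that forces $o(x^{-\varepsilon})$ to be negligible against it. The secondary point requiring care is that one must simultaneously shift the $\Pi$-index from $c^{-1}$ to $1$ and change the auxiliary slowly varying function from $M_\mu$ to $M_\nu$; this is resolved cleanly once one observes $M_\nu\sim c^{-1}M_\mu$.
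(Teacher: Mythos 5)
Your proof is correct and follows essentially the same route as the paper: apply Theorem \ref{thm:alpha1} to $\mu$, absorb the $o(x^{-\eps})$ perturbation using the slow variation of $S_\mu(-1/\cdot)$ together with Lemma \ref{lem:simpleT}, renormalize the auxiliary slowly varying function from $M_\mu$ to $c^{-1}M_\mu\sim M_\nu$, and conclude by the converse implication of Theorem \ref{thm:alpha1}. The only cosmetic difference is that you pass to a multiplicative perturbation and invoke Lemma \ref{lem:simpleT}(iii), whereas the paper keeps the additive form (part (ii) of that lemma); your write-up actually makes explicit the reciprocal step that the paper leaves implicit.
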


\begin{proof}
	By Theorem \ref{thm:alpha1} we have
	\[
	\frac{1}{S_\nu\left(-1/x\right)} = \frac{1}{c\, S_\mu\left(-1/x\right)}+o(x^{-\eps})\in \Pi_M(c^{-1}),
	\]
	where $M(x)=L\big((x-1)/S_\mu(-1/x)\big)$. Since $L\in \mathcal{R}_0$, we have
	$M(x)\sim L\big((x-1)/S_\nu(-1/x)\big)$, which implies that
	\[
	\Pi_M(c^{-1}) = \Pi_{\widetilde{M}_c}(1),
	\]
	where $\widetilde{M}_c(x)=c^{-1}L\big((x-1)/S_\nu(-1/x)\big)$. By the converse implication of  Theorem \ref{thm:alpha1} we obtain the result.
\end{proof}

\begin{Corollary}\label{cor:first}
	Assume that $p\in\mathbb{N}$, $\mu\in\mathcal{M}_p$ with 
	\[
	\bar{\mu}(x)\sim x^{-\alpha}L(x).
	\]
	Let $c>0$ and $\nu\in\mathcal{M}_+$. 
	Moreover \begin{enumerate}
			\item for $\alpha\in[p,p+1)$, assume that
			\[
			S_\nu^{(p)}\left(-\frac1x\right)\sim c\, S_\mu^{(p)}\left(-\frac1x\right),
			\]
			\item for $\alpha=p+1$ suppose
			\[
			x\mapsto S_\nu^{(p)}\left(-\frac1x\right)-c\, S_\mu^{(p)}\left(-\frac1x\right)\in \Pi_L(0).
			\]
		\end{enumerate}
	In both cases we have
	\[
	\bar{\nu}(x)\sim c \left(\frac{m_1(\nu)}{m_1(\mu)}\right)^{\alpha+1} \bar{\mu}(x).
	\]
\end{Corollary}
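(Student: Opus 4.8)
The plan is to read the asymptotics of $S_\mu^{(p)}$ off the tail of $\mu$ via Theorem \ref{thm:p}, transport those asymptotics to $\nu$ through the hypotheses, and then invoke the converse half of Theorem \ref{thm:p} for $\nu$. The assumption $\mu\in\mathcal{M}_p$ is what selects the correct subcase: by Remark \ref{rem:ap}, $\mu\in\mathcal{M}_p$ together with $\bar{\mu}(x)\sim x^{-\alpha}L(x)$ forces $\int_1^{+\infty} L(t)/t\,\dd t<+\infty$ when $\alpha=p$ and $\int_1^{+\infty} L(t)/t\,\dd t=+\infty$ when $\alpha=p+1$. Hence the first case falls under Theorem \ref{thm:p}(i) and the second under Theorem \ref{thm:p}(ii).

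In the first case ($\alpha\in[p,p+1)$) I would first apply the forward implication of Theorem \ref{thm:p}(i) to obtain
\[
S_\mu^{(p)}\left(-\frac1x\right)\sim -\frac{\Gamma(\alpha+1)\Gamma(p+1-\alpha)}{m_1(\mu)^{\alpha+1}} x^{p+1-\alpha}L(x).
\]
Combining this with $S_\nu^{(p)}(-1/x)\sim c\,S_\mu^{(p)}(-1/x)$ gives the identical asymptotic for $\nu$ with an extra factor $c$. I then recast this expression as the right-hand side of \eqref{eq:Sp} written for $\nu$, i.e.\ with $m_1(\nu)$ in place of $m_1(\mu)$ and a suitable slowly varying $\tilde{L}$. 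Matching the two expressions forces
\[
\tilde{L}(x) = c\left(\frac{m_1(\nu)}{m_1(\mu)}\right)^{\alpha+1} L(x),
\]
which is again slowly varying, and the converse implication of Theorem \ref{thm:p}(i) applied to $\nu$ yields $\bar{\nu}(x)\sim x^{-\alpha}\tilde{L}(x)$, which is the claim.

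In the second case ($\alpha=p+1$) the same scheme runs through the class $\Pi_L$ rather than plain regular variation. Theorem \ref{thm:p}(ii) gives $x\mapsto S_\mu^{(p)}(-1/x)\in\Pi_L\bigl(-\tfrac{(p+1)!}{m_1(\mu)^{p+2}}\bigr)$; scaling by $c$ and adding the $\Pi_L(0)$ perturbation from the hypothesis, and using that $\Pi_L$ is closed under linear combinations with the index transforming linearly, produces $x\mapsto S_\nu^{(p)}(-1/x)\in\Pi_L\bigl(-\tfrac{c(p+1)!}{m_1(\mu)^{p+2}}\bigr)$. Since a membership $f\in\Pi_{L}(c_1)$ coincides with $f\in\Pi_{(c_1/c_2)L}(c_2)$ for any $c_2$ with $c_1/c_2>0$, I would take $c_2=-\tfrac{(p+1)!}{m_1(\nu)^{p+2}}$, for which the slowly varying function becomes $\tilde{L}(x)=c\,(m_1(\nu)/m_1(\mu))^{p+2}L(x)=c\,(m_1(\nu)/m_1(\mu))^{\alpha+1}L(x)$, and then apply the converse of Theorem \ref{thm:p}(ii) to conclude $\bar{\nu}(x)\sim x^{-(p+1)}\tilde{L}(x)$.

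The only real obstacle is the legitimacy of applying the converse half of Theorem \ref{thm:p} to $\nu$, which presupposes $m_1(\nu)<+\infty$ so that $m_1(\nu)$ even enters the formulas. This is guaranteed by Theorem \ref{thm:p} itself: the assumed behavior of $S_\nu^{(p)}$ — a nonzero regularly varying asymptotic in the first case, a nondegenerate $\Pi_L$ membership in the second — already forces $m_p(\nu)<+\infty$, and since $p\geq1$ this yields $m_1(\nu)<+\infty$. Everything else is bookkeeping of the $\Gamma$-factors and the powers of $m_1$, which cancel to leave exactly the constant $c\,(m_1(\nu)/m_1(\mu))^{\alpha+1}$.
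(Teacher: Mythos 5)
Your proposal is correct and follows essentially the same route as the paper's own proof: apply the forward half of Theorem \ref{thm:p} to $\mu$ (with Remark \ref{rem:ap} selecting the subcase via $\mu\in\mathcal{M}_p$), transfer the asymptotics to $S_\nu^{(p)}$ through the hypothesis, absorb the constant into the slowly varying function $L_c(x)=c\,(m_1(\nu)/m_1(\mu))^{\alpha+1}L(x)$, and invoke the converse half of Theorem \ref{thm:p} for $\nu$. Your explicit remark that the hypotheses on $S_\nu^{(p)}$ already force $m_p(\nu)<+\infty$ (hence $m_1(\nu)<+\infty$) is a point the paper leaves implicit in the statement of Theorem \ref{thm:p}, but it is the same argument.
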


\begin{proof}
	We consider two cases:
	\begin{description}
		\item[$\alpha\in[p,p+1)$]  By Remark \ref{rem:ap} we have $\int_1^{+\infty} L(t)/t\,\dd t<+\infty$ if $\alpha=p$. Thus, by Theorem \ref{thm:p} $(i)$ we obtain 
		\begin{align*}
			S_{\nu}^{(p)}\left(-\frac1x\right) &
			\sim 
			- c \frac{\Gamma(\alpha+1)\Gamma(p+1-\alpha)}{m_1(\mu)^{\alpha+1}} x^{p+1-\alpha}L(x) \\
			& =-\frac{\Gamma(\alpha+1)\Gamma(p+1-\alpha)}{m_1(\nu)^{\alpha+1}} x^{p+1-\alpha}  L_c(x),
		\end{align*}
		where we  denoted $L_c(x) = c\, (m_1(\nu)/m_1(\mu))^{\alpha+1}L(x)\in \mathcal{R}_0$.
		Again using Theorem \ref{thm:p} $(i)$ we obtain the assertion.
		
		\item[$\alpha=p+1$] In this case, Remark \ref{rem:ap} ensures that $\int_1^{+\infty} L(t)/t\,\dd t=+\infty$. Under the assumptions, Theorem \ref{thm:p} $(ii)$ implies 
		\begin{align*}
			S_{\nu}^{(p)}\left(-\frac1x\right) \in  \Pi_L\left(- c\frac{(p+1)!}{m_1(\mu)^{p+2}}\right)=\Pi_{L_{c}}\left(-\frac{(p+1)!}{m_1(\nu)^{p+2}}\right),
		\end{align*}
		where $L_{c}$ is defined as above. Using Theorem \ref{thm:p} $(ii)$ again yields the result.
	\end{description}
\end{proof}

\begin{thm}\label{thm:pat0}\  
		\begin{enumerate}[(i)]
		\item If $\mu\in\mathcal{M}_+$ and $\bar{\mu}(x)\sim L(\log(x))/\log(x)^\beta$ for $L\in\mathcal{R}_0$ and $\beta>0$, then for $t\geq 1$,
		\[
		\mu^{\boxtimes t}\big((x,+\infty)\big)\sim t^\beta\,\mu((x,+\infty).
		\]
		\item  Assume $\mu\in\mathcal{M}_+$  satisfies \eqref{Introeq:L} for $\alpha\in(0,1)$. Then for $t\geq 1$ we have $\mu^{\boxtimes t}\big((\cdot,+\infty)\big)\in \mathcal{R}_{-\alpha_t}$, where 
		\[
		\alpha_t=\frac{\alpha}{\alpha+t(1-\alpha)}.
		\]
		In particular, if $\bar{\mu}(x)\sim c/x^{\alpha}$ for some $\alpha\in(0,1)$ and $c>0$, then for $t\geq 1$ one has,
		\begin{align*}
			\mu^{\boxtimes t}\big((x,+\infty)\big)\sim \frac{c_{t,\alpha}}{x^{\alpha_t}},
		\end{align*}
		where 
		\[
		c_{t,\alpha} = \left(c\frac{\pi\alpha}{\sin(\pi\alpha)}\right)^{t/(\alpha+t(1-\alpha))} \frac{\sin(\pi\alpha_t)}{\pi\alpha_t}.
		\]
		\item If $\mu\in\mathcal{M}_+$ is such that $\bar{\mu}(x)\sim c/x$ with $c>0$, then for $t\geq 1$,
		\begin{align*}
			\mu^{\boxtimes t}\big((x,+\infty)\big)\sim c^{t-1}t \log(x)^{t-1}\mu\big((x,+\infty)\big).
		\end{align*}
		\item	Let $\alpha\geq1$ and assume $\mu\in\mathcal{M}_+$  satisfies \eqref{Introeq:L} and $m_1(\mu)<+\infty$.   Then for $t\geq 1$ we have 
		\begin{align*}
			\mu^{\boxtimes t}\big((x,+\infty)\big)\sim t\, m_1(\mu)^{\alpha(t-1)} \mu\big((x,+\infty)\big).
		\end{align*}
	\end{enumerate}
\end{thm}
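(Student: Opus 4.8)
The engine for all four parts is the identity $S_{\mu^{\boxtimes t}}(z)=S_\mu(z)^t$, valid for $z\in(\delta-1,0)$ near $0^-$ (Proposition \ref{thm:useful}(iv) for integer $t$, and by the very definition of the fractional powers for $t\geq 1$); the power is well defined since $S_\mu>0$ there. So my plan is uniform: (a) convert the tail hypothesis on $\mu$ into asymptotics of $S_\mu$ at $0^-$ through the matching theorem of Section \ref{sec:4}; (b) raise to the power $t$; (c) invert via the converse implication of the same theorem to recover the tail of $\mu^{\boxtimes t}$. I will also use throughout that $m_1(\mu^{\boxtimes t})=m_1(\mu)^t$, immediate from $S_{\mu^{\boxtimes t}}(0^-)=S_\mu(0^-)^t$ and $S_\sigma(0^-)=1/m_1(\sigma)$ (Proposition \ref{thm:useful}(ii)--(iii)). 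Part (ii) is the most transparent instance: Theorem \ref{thm:01} gives $S_\mu(-1/x)\in\mathcal{R}_{1-1/\alpha}$, so $S_{\mu^{\boxtimes t}}(-1/x)\in\mathcal{R}_{t(1-1/\alpha)}$; solving $1-1/\alpha_t=t(1-1/\alpha)$ gives $\alpha_t=\alpha/(\alpha+t(1-\alpha))\in(0,1)$, and the converse half of Theorem \ref{thm:01} (index $\alpha_t$) returns $\mu^{\boxtimes t}\in\mathcal{R}_{-\alpha_t}$. The explicit $c_{t,\alpha}$ drops out of \eqref{eqn:4.4}: comparing its two instances for $\alpha$ and $\alpha_t$ and using $t\alpha_t/\alpha=t/(\alpha+t(1-\alpha))$ is pure exponent bookkeeping.

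For (iv) I would apply Corollary \ref{cor:first} with $\nu=\mu^{\boxtimes t}$ and $c=t\,m_1(\mu)^{1-t}$, the only task being to verify its hypothesis on $(S_\mu^t)^{(p)}$. Writing the $p$-th derivative by the Fa\'a di Bruno formula, the sole term carrying the top derivative $S_\mu^{(p)}$ is $t\,S_\mu^{t-1}S_\mu^{(p)}$; every remaining term is a product of derivatives $S_\mu^{(j)}$ with $j\le p-1$. Because $m_p(\mu)<\infty$, Lemma \ref{lem:ConvergenceS} --- and Lemma \ref{lem:expansion} in the critical case $\alpha=p+1$ --- shows these lower derivatives converge, indeed up to $o(x^{-\varepsilon})$, while $S_\mu(-1/x)^{t-1}\to m_1(\mu)^{1-t}$. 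Hence for $\alpha\in[p,p+1)$ the top term dominates and $S_{\mu^{\boxtimes t}}^{(p)}(-1/x)\sim c\,S_\mu^{(p)}(-1/x)$, whereas for $\alpha=p+1$ the same decomposition together with Lemma \ref{lem:simpleT} puts $S_{\mu^{\boxtimes t}}^{(p)}(-1/x)-c\,S_\mu^{(p)}(-1/x)$ in $\Pi_L(0)$. Corollary \ref{cor:first} then yields $\overline{\mu^{\boxtimes t}}(x)\sim c\,(m_1(\mu)^{t-1})^{\alpha+1}\bar\mu(x)=t\,m_1(\mu)^{\alpha(t-1)}\bar\mu(x)$.

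Part (iii) resists Corollary \ref{cor:firstA1}, because $S_\mu^t$ is not a constant multiple of $S_\mu$, so I would argue straight from Theorem \ref{thm:alpha1}: with $L\equiv c$ it gives $g:=1/S_\mu(-1/\cdot)\in\Pi_c(1)$, hence $g(x)\sim c\log x$. Since $g\in\mathcal{R}_0$ we have $g(\lambda x)/g(x)\to 1$, so $g(\lambda x)^t-g(x)^t\sim t\,g(x)^{t-1}(g(\lambda x)-g(x))$, giving $g^t=1/S_{\mu^{\boxtimes t}}(-1/\cdot)\in\Pi_{M_t}(1)$ with $M_t(x)=t\,c\,g(x)^{t-1}\sim t\,c^t(\log x)^{t-1}=:L_t(x)$. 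After checking $L_t((x-1)g(x)^t)\sim M_t(x)$ and $\int_1^{+\infty} L_t(s)/s\,\dd s=+\infty$, the converse of Theorem \ref{thm:alpha1} delivers $\overline{\mu^{\boxtimes t}}(x)\sim L_t(x)/x=c^{t-1}t(\log x)^{t-1}\bar\mu(x)$. Part (i) is handled by Corollary \ref{cor:expv}: one first shows $s:=-\log S_\mu(-1/\cdot)\in\mathcal{R}_{1/\beta}$, and then, since $-\log S_{\mu^{\boxtimes t}}(-1/\cdot)=t\,s$, Corollary \ref{cor:expv} gives $\bar\mu(x)\sim 1/s^{\langle-1\rangle}(\log x)$ and $\overline{\mu^{\boxtimes t}}(x)\sim 1/(ts)^{\langle-1\rangle}(\log x)=1/s^{\langle-1\rangle}(\log x/t)$; the ratio tends to $t^\beta$ because $s^{\langle-1\rangle}\in\mathcal{R}_\beta$.

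I expect two genuinely nontrivial points. In (iv) it is the Fa\'a di Bruno bookkeeping: one must be sure that every term other than $t\,S_\mu^{t-1}S_\mu^{(p)}$ is asymptotically negligible, which rests entirely on $m_p(\mu)<\infty$ forcing the lower-order derivatives of $S_\mu$ to settle to finite limits fast enough to survive the $\Pi_L$ and regular-variation comparisons. In (i) the crux is establishing $-\log S_\mu(-1/\cdot)\in\mathcal{R}_{1/\beta}$; this is the least mechanical step, since it passes through Theorem \ref{thm:a0} and the asymptotic inversion of the rapidly varying $f_\mu$ (equivalently, of $w\mapsto w^\beta/L(w)$), and it is where I would concentrate the care.
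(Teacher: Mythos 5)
Your proposal is correct, and parts (ii)--(iv) follow essentially the same path as the paper: raise $S_\mu$ to the power $t$, identify the dominant term in $(S_\mu^t)^{(p)}$ via the one-block term $t\,S_\mu^{t-1}S_\mu^{(p)}$ of Fa\'a di Bruno, and feed the result into Corollaries \ref{cor:first0}--\ref{cor:first} and Theorems \ref{thm:01}, \ref{thm:alpha1} exactly as the paper does (your L'H\^opital step in (iii) is literally the paper's). The one place you genuinely diverge is (i). The paper never computes the asymptotics of $f_\mu$ itself: it exploits the exact functional identity $f_1(x)=(x-1)^{1-1/t}f_t(x)^{1/t}$ to express $f_t^{\langle-1\rangle}$ through $f_1^{\langle-1\rangle}$, uses $\log f_t^{\langle-1\rangle}(x)=o(\log x)$, and concludes directly --- thereby sidestepping the inversion of an asymptotic relation between slowly varying functions, which the Remark after Theorem \ref{thm:a0} explicitly warns is not legitimate in general. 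You instead invert $f_\mu^{\langle-1\rangle}(x)\sim w(\log x)$ with $w(u)=u^{\beta}/L(u)\in\mathcal{R}_\beta$ to get $-\log S_\mu(-1/x)\sim w^{\langle-1\rangle}(x)\in\mathcal{R}_{1/\beta}$ and then apply Corollary \ref{cor:expv} to both $\mu$ and $\mu^{\boxtimes t}$. This is valid --- the inversion happens at the level of a regularly varying function of \emph{positive} index in the variable $\log x$, where $f(u_x)\sim f(v_x)$ does imply $u_x\sim v_x$, and the correction $\log(x-1)=o(w^{\langle-1\rangle}(x))$ is harmless --- but it is precisely the step you flagged, and it must be spelled out with that positive-index inversion argument; your route buys a cleaner conceptual statement ($-\log S_{\mu^{\boxtimes t}}$ scales linearly in $t$ inside $\mathcal{R}_{1/\beta}$), while the paper's buys immunity from the inversion subtlety.
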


\begin{proof}
	\begin{enumerate}[$(i)$]
		\item Recall that for $\mu\in\mathcal{M}_+$ we defined $f_\mu(x)=(x-1)/S_\mu(-1/x)$. Denote $f_t=f_{\mu^{\boxtimes t}}$. Since $S_{\mu^{\boxtimes t}}=S_\mu^t$, we have
		\begin{align}\label{eq:slow}
		f_{1}(x)=(x-1)^{1-1/t}f_{t}(x)^{1/t},
		\end{align}
		which is equivalent to
		\[
		f_t^{\langle-1\rangle}(x)=f_1^{\langle-1\rangle}\left(x^{1/t} \left(f_t^{\langle-1\rangle}(x)-1\right)^{1-1/t}\right).
		\]
		Note that since $\bar{\mu}\in\mathcal{R}_0$, by Theorem \ref{thm:a0} we have $f_1\in K\mathcal{R}_\infty$. This and \eqref{eq:slow} in turn imply that $f_t\in K\mathcal{R}_\infty$ for any $t\geq1$. Whence, by Theorem \ref{thm:KR} $(i)$ we obtain that $f_t^{\langle-1\rangle}\in\mathcal{R}_0$ and so 
		 \begin{align}\label{eq:slow2}
		\log\left(f_t^{\langle-1\rangle}(x)\right)=o(\log(x)).
		 \end{align}
		By Theorem \ref{thm:a0} we have $1/f_1^{\langle-1\rangle}(x)\sim g\left(\log(x)\right)$, where $g(x):=x^{-\beta}L(x)$. In view of \eqref{eq:slow2}, we have
		\begin{align*}
		\frac{1}{f_t^{\langle-1\rangle}(x)}&\sim  g\left(\log\left(x^{1/t} \left(f_t^{\langle-1\rangle}(x)-1\right)^{1-1/t}\right)\right)\sim g\left(\log(x^{1/t})\right)\\
		&=g\left(t^{-1} \log(x)\right)\sim t^\beta g(\log(x)),
		\end{align*}
		where we have used the fact that  $g\in\mathcal{R}_{-\beta}$.

		The converse implication of Theorem \ref{thm:a0} yields the result.
		\item This is a straightforward application of Theorem \ref{thm:01} with $L\equiv c>0$, for the case $L=c$ we use \eqref{eqn:4.4}.
		\item 	In view of Theorem \ref{thm:alpha1}, it is enough to show that 
		\[
		x\mapsto 1/S_{\mu^{\boxtimes t}}(-1/x)\in \Pi_{\widetilde{M}}(1),
		\]
		where $\widetilde{M}(x)=M\big((x-1)/S_{\mu^{\boxtimes t}}(-1/x)\big)$ and $M(x)=c^t\, t \log(x)^{t-1}$. 	
		
		By direct implication of 
		Theorem \ref{thm:alpha1} we have $x\mapsto 1/S_\mu(-1/x)\in \Pi_{L_1}(1)$, where $L_1(x)=c$. In particular, this implies that $1/S_\mu(-1/x)\sim c \log(x)$.
		
		For $t\geq1$, by L' Hospital's rule we have 
			\[
			\lim_{z\to1} \frac{z^t-1}{t(z-1)}=1.
			\]
		Setting $z=S_\mu(-1/(\lambda x)) / S_\mu(-1/x)$, after simple rearrangements, we obtain
		\begin{align*}
		\frac{1}{S_\mu(-1/(\lambda x))^t} - 		\frac{1}{S_\mu(-1/x)^t} &\sim t \left(\frac{1}{S_\mu(-1/(\lambda x))} - 		\frac{1}{S_\mu(-1/x)}\right) \frac{1}{S_\mu(-1/x)^{t-1}}\\
		&\sim t \,\big(c\log(\lambda)\big) \,\big(c\log(x)\big)^{t-1} = \log(\lambda) M(x),
		\end{align*}
	where the latter asymptotic equivalence follows from previous observations. Thus, we see that  $x\mapsto 1/S_{\mu^{\boxtimes t}}(-1/x)\in \Pi_{M}(1)$.
		
		It is left to show that 
		$M(x)\sim \widetilde{M}(x)$. 
		We have
		\begin{align*}
			\widetilde{M}(x)&=M\left(\frac{x-1}{S_\mu^t(-1/x)}\right)\sim M\left(x \log(x)^{t}\right)\\
			&= c^t t \log\big(x\log(x)^t\big)^{t-1}\sim c^t t \log\left(x\right)^{t-1}=M(x).
		\end{align*}
		\item Let $p\in\mathbb{N}$ be such that $\mu\in\mathcal{M}_p$. We consider two cases (see Remark \ref{rem:ap}):
		\begin{description}
			\item[$\alpha\in[p,p+1)$]. By Theorem \ref{thm:p} $(i)$ we see that $S_\mu^{(k)}(0^-)$ is finite for all $k<p$, $S_\mu(0^-)=1/m_1(\mu)$, while $S_\mu^{(p)}(-1/x)\to+\infty$ as $x\to+\infty$.  Thus, we obtain 
			\begin{align*}
				S_{\mu^{\boxtimes t}}^{(p)}\left(-\frac1x\right) &= \frac{\dd^p S_\mu^t}{\dd z^p}\left(-\frac1x\right) \sim t \,S_\mu^{t-1}\left(-\frac1x\right) S_\mu^{(p)}\left(-\frac1x\right) \\
				& \sim 
				t \frac{1}{m_1(\mu)^{t-1}}S_\mu^{(p)}\left(-\frac1x\right).
			\end{align*}
			Since $m_1(\mu^{\boxtimes t})=m_1(\mu)^t$, we have
			\[
			\frac{t}{m_1(\mu)^{t-1}}\left(\frac{m_1(\mu^{\boxtimes t})}{m_1(\mu)}\right)^{\alpha+1}=t\, m_1(\mu)^{(t-1)\alpha}
			\]
			and the result follows from Corollary \ref{cor:first}.

			\item[$\alpha=p+1$] By Theorem \ref{thm:p} $(ii)$  we get
			\[
			x\mapsto S_{\mu}^{(p)}\left(-\frac1x\right) \in \Pi_L\left(-\frac{(p+1)!}{m_1(\mu)^{p+2}}\right).
			\]
			Similarly as before, the only term that contributes to asymptotics of $S_{\mu^{\boxtimes t}}^{(p)}(z)$ is $t S_\mu^{t-1}(z) S_\mu^{(p)}(z)$. Indeed, we have $S_\mu^{(n)}(z)=S_\mu^{(n)}(0^-)+o(z^\varepsilon)$ (see the argument after \eqref{eq:chipnew}) for any $\varepsilon\in(0,1)$. Thus,
			\begin{align*}
				S_{\mu^{\boxtimes t}}^{(p)}\left(-\frac1x\right)&=t S_\mu^{t-1}\left(-\frac1x\right) S_\mu^{(p)}\left(-\frac1x\right) + c + o(x^{-\eps}) \\
				&	=  t \frac{1}{m_1(\mu)^{n-1}} S_\mu^{(p)}\left(-\frac1x\right) + c + o(x^{-\eps})
			\end{align*}
			for $c\in\R$.
			Thus, the result follows from Corollary \ref{cor:first}.
		\end{description}  
	\end{enumerate}
\end{proof}

\section{$\boxtimes$-infinitely divisible laws}\label{sec:6}

In this section we apply the machinery developed in Section \ref{sec:4} to study the relation between tails of L\'evy measure and tails of $\boxtimes$ infinitely divisible measure. The free additive case was studied in \cite{CCH18}.

A measure $\mu\in\mathcal{M}_+$ is said to be $\boxtimes$-infinitely divisible ($\boxtimes$-ID) if, for every $n\in\mathbb{N}$, there exists a measure $\nu_n\in\mathcal{M}_+$ such that $\mu = \nu_n^{\boxtimes n}$. The set of $\boxtimes$-ID distributions was characterized in terms of $S$-transforms in \cite[Theorem 6.13]{BV93}.
\begin{thm}
	A measure $\mu\in\mathcal{M}_+$ is $\boxtimes$-ID if and only if there exists a finite positive Borel measure $\sigma$ on the compact space $[0,+\infty]$ and a real number $\gamma$ such that $S_\mu(z)=\exp(v(z))$, where $v$ is defined by
	\begin{align}\label{eq:idd}\begin{split}
			v\left(\frac{z}{1-z}\right)&=\gamma+\int_{[0,+\infty]}\frac{1+tz}{z-t}\sigma(\dd t) \\
			&= \gamma + \sigma(\{0\})\frac1z- \sigma(\{+\infty\} )z+\int_{(0,+\infty)}\frac{1+tz}{z-t}\sigma(\dd t).
	\end{split}\end{align}
\end{thm}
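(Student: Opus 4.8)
The plan is to convert $\boxtimes$-infinite divisibility into an additive divisibility statement for $v:=\log S_\mu$, and then to recognise $v$, after a Möbius change of variable, as a Nevanlinna (Pick-type) function, whose classical integral representation is precisely \eqref{eq:idd}. First I would observe that $S_\mu>0$ on $(\delta-1,0)$ by Proposition~\ref{thm:useful}(ii), so $v=\log S_\mu$ is well defined there and extends analytically to the complex domain on which $S_\mu$ lives. By the multiplicativity \eqref{eq:Strans}, a decomposition $\mu=\nu_n^{\boxtimes n}$ is equivalent to $S_{\nu_n}=\exp(v/n)$. Hence $\mu$ is $\boxtimes$-ID if and only if $\exp(v/n)$ is the $S$-transform of some element of $\mathcal{M}_+$ for every $n$; by interpolation this is the same as requiring $\exp(tv)$ to be an $S$-transform for every $t\geq0$, i.e. that $v$ generate a $\boxtimes$-convolution semigroup. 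The theorem thus reduces to characterising the admissible generators $v$.

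The analytic heart is to show that admissibility is equivalent to $w(z):=v\big(z/(1-z)\big)$ being a Nevanlinna function of definite sign on a half-plane. Here the Möbius map $z\mapsto z/(1-z)$ carries the natural domain of $S_\mu$ (inherited from the univalence of $\psi_\mu$ on $i\mathbb{C}_+$ together with $\chi_\mu\colon\psi_\mu(i\mathbb{C}_+)\to\mathbb{C}_+$) onto a half-plane, and the content is that $\operatorname{Im} w$ keeps a constant sign there. For necessity I would run the exponential semigroup $\exp(tw)$ and differentiate at $t=0$: since the semigroup of $S$-transforms maps into a fixed half-plane (via the description of the range of the $S$-transform on $\mathcal{M}_+$), its logarithmic derivative inherits the requisite sign condition on $\operatorname{Im}$. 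For sufficiency the same range characterisation is used in the opposite direction to verify that each $\exp(tw)$ is again an $S$-transform.

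Once $w$ is known to be Nevanlinna, the classical Nevanlinna/Herglotz representation yields
\[
w(z)=\gamma+bz+\int_{(0,+\infty)}\frac{1+tz}{z-t}\,\sigma_0(\dd t),
\]
with $\gamma\in\R$, a sign-definite constant $b$, and a finite positive measure $\sigma_0$ on $(0,+\infty)$. The linear term is absorbed as the atom at $+\infty$, since $\frac{1+tz}{z-t}\to -z$ as $t\to+\infty$, giving $b=-\sigma(\{+\infty\})$; the behaviour as $z\to0$ produces the pole $\sigma(\{0\})/z$ corresponding to an atom at $0$. Collecting these contributions rewrites the representation over the compactified space $[0,+\infty]$ exactly as in \eqref{eq:idd}. (An alternative route would transport the free additive L\'evy--Khintchine representation through the correspondence between $ID(\boxplus)$ and $ID(\boxtimes)$, but the direct Nevanlinna argument keeps everything on the $S$-transform side.)

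The crux, and the step I expect to be the main obstacle, is the analytic identification of admissible generators with sign-definite Nevanlinna functions. One must pin down the exact image of the domain of $S_\mu$ under $z\mapsto z/(1-z)$, control the boundary behaviour so that the representing measure is \emph{finite} on the compact $[0,+\infty]$ rather than merely $\sigma$-finite on the open line, and — for sufficiency — show that exponentiating preserves the class of $S$-transforms, which is not formal since $\exp$ does not obviously preserve the Nevanlinna class. It is precisely here that the description of the range of the $S$-transform map on $\mathcal{M}_+$ becomes indispensable.
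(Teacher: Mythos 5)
The first thing to note is that the paper does not prove this statement at all: it is quoted verbatim as Theorem 6.13 of \cite{BV93}, so there is no in-paper argument to compare yours against. Judged on its own terms, your outline does follow the strategy of the original Bercovici--Voiculescu proof --- pass to $v=\log S_\mu$ (legitimate by positivity of $S_\mu$, Proposition~\ref{thm:useful}), move to the variable $z/(1-z)$, identify the admissible generators as sign-definite Nevanlinna functions, and invoke the Herglotz representation, with the linear term and the pole at $0$ accounting for the atoms of $\sigma$ at $+\infty$ and $0$. The bookkeeping in your displayed formula is consistent with \eqref{eq:idd}.

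That said, as a proof the proposal has genuine gaps, several of which you yourself flag. The ``interpolation'' from divisibility by every $n$ to a full semigroup $(\exp(tv))_{t\ge 0}$ is not automatic: rational exponents come for free from multiplicativity of the $S$-transform, but passing to all real $t$ requires a tightness and weak-compactness argument in $\mathcal{M}_+$, and in fact the necessity direction is usually run without it --- one extracts $v$ as a normal limit of $n(\Sigma_{\nu_n}-1)$ via Montel's theorem rather than by ``differentiating the semigroup at $t=0$'', which presupposes the semigroup you are trying to construct. Likewise, the two load-bearing ingredients --- the exact characterisation of which analytic functions arise as $S$-transforms of elements of $\mathcal{M}_+$, and the verification that $\exp(w)$ is again such a function whenever $w$ admits the representation \eqref{eq:idd} --- are declared ``indispensable'' but not supplied; the second is precisely where the finiteness of $\sigma$ on the compactification $[0,+\infty]$, as opposed to a merely $\sigma$-finite measure on the open half-line, has to be extracted from the boundary behaviour. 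So the proposal is a correct roadmap to the classical proof, not a self-contained argument; for the purposes of this paper the statement is simply imported from \cite{BV93}.
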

Let $\mu_\boxtimes^{\gamma,\sigma}$ be the $\boxtimes$-ID measure determined by \eqref{eq:idd}. Measure $\sigma$ is called the L\'evy measure for $\mu_\boxtimes^{\gamma,\sigma}$ . We will describe asymptotics of right tail of $\mu_\boxtimes^{\gamma,\sigma}$ when its L\'evy measure has regularly varying left tail. 
A corollary to results that will be presented later is the following.
\begin{Corollary}
	Assume that $\sigma$ has regularly varying left (resp. right) tail with index $-\alpha\leq0$. If $\alpha=1$, assume additionally that limit of $x\sigma\big([0,x^{-1})\big)$ (resp. $x\sigma\big((x,\infty)\big)$) exists as $x\to+\infty$. Then $\mu_\boxtimes^{\gamma,\sigma}$ has regularly varying right (resp. left) tail.
\end{Corollary}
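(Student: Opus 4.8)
The plan is to read off the behavior of $S_{\mu_\boxtimes^{\gamma,\sigma}}$ near $0^-$ from the representation \eqref{eq:idd} and then feed it into the theorems of Section~\ref{sec:4}, which already translate the asymptotics of an $S$-transform at $0^-$ into the tail of the underlying measure. I would prove in detail the implication ``left tail of $\sigma$ $\Rightarrow$ right tail of $\mu_\boxtimes^{\gamma,\sigma}$'' and deduce the parenthetical statement by duality. Indeed, if $\hat\mu$ is the image of $\mu_\boxtimes^{\gamma,\sigma}$ under $x\mapsto 1/x$, then Proposition~\ref{thm:useful}$(v)$ gives $S_{\hat\mu}(z)=1/S_{\mu_\boxtimes^{\gamma,\sigma}}(-1-z)$, and substituting this into \eqref{eq:idd} shows after a routine computation that $\hat\mu=\mu_\boxtimes^{\gamma',\sigma'}$, where $\sigma'$ is the pushforward of $\sigma$ under $t\mapsto 1/t$ (which interchanges $0$ and $+\infty$) and $\gamma'$ is an explicit constant. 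Since the right tail of $\sigma$ is the left tail of $\sigma'$ and the left tail of $\mu_\boxtimes^{\gamma,\sigma}$ is the right tail of $\hat\mu$, the second assertion reduces to the first.

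To analyse the first implication I set $z=-1/(x-1)$, so that $z/(1-z)=-1/x$ and $S_{\mu_\boxtimes^{\gamma,\sigma}}(-1/x)=\exp(\gamma+h(z))$ with $h(z)=\int_{[0,+\infty]}\frac{1+tz}{z-t}\,\sigma(\dd t)$. Writing $z=-s$, $s=1/(x-1)\to 0^+$, the atom at $+\infty$ contributes a vanishing term, the atom at $0$ contributes $-\sigma(\{0\})/s$, and the continuous part equals $-J(s)+s\int_{(0,+\infty)}\frac{t}{s+t}\,\sigma(\dd t)$, where $J(s):=\int_{(0,+\infty)}\frac{1}{s+t}\,\sigma(\dd t)$ and the last integral is $O(s)$ because $\sigma$ is finite. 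Everything thus hinges on the Stieltjes transform $J(s)$ as $s\to0^+$, which probes $\sigma$ near the origin. Pushing $\sigma|_{(0,+\infty)}$ forward under $t\mapsto 1/t$ to a finite measure $\tilde\sigma$, whose right tail $\tilde\sigma\big((y,+\infty)\big)=\sigma\big((0,1/y)\big)$ is by hypothesis regularly varying of index $-\alpha$, the substitution $s=1/y$ yields $J(1/y)=y\int_{(0,+\infty)}\frac{u}{u+y}\,\tilde\sigma(\dd u)$. The integral on the right is exactly the Mellin-convolution object appearing in the proofs of Theorems~\ref{thm:a0}, \ref{thm:01}, \ref{thm:alpha1} and \ref{thm:p}, so the Tauberian results of Section~\ref{sec:prelim} (Theorems~\ref{thm:TaubSt} and \ref{thm:tauber}) apply to $\tilde\sigma$ verbatim, even though it is not a probability measure.

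The argument then splits according to $\alpha$. For $\alpha\in[0,1)$ (this includes an atom of $\sigma$ at $0$, for which $\alpha=0$) one has $J(s)\to+\infty$ and $h(z)\to-\infty$, and the Tauberian asymptotics of $\int\frac{u}{u+y}\tilde\sigma(\dd u)$ give $S_{\mu_\boxtimes^{\gamma,\sigma}}(-1/x)=\exp(-s(x))$ with $s\in\mathcal{R}_{1-\alpha}$ (or $\mathcal{R}_1$ when $\alpha=0$); since moreover $1/S_{\mu_\boxtimes^{\gamma,\sigma}}(-1/x)$ is monotone and rapidly varying it lies in $K\mathcal{R}_\infty$ by Theorem~\ref{thm:KR}$(ii)$, so Corollary~\ref{cor:expv} (via Theorem~\ref{thm:a0}) yields a slowly, hence regularly, varying right tail. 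For $\alpha=1$ the extra assumption that $y\sigma\big([0,1/y)\big)$ converges places $J(1/y)$, and hence $1/S_{\mu_\boxtimes^{\gamma,\sigma}}(-1/x)$, in a single de Haan class $\Pi_M(1)$, so Theorem~\ref{thm:alpha1} applies. For $\alpha>1$ the integral $J(s)$ converges to $\int_{(0,+\infty)}t^{-1}\sigma(\dd t)<+\infty$, so $S_{\mu_\boxtimes^{\gamma,\sigma}}(0^-)$ is positive and finite and the measure has a finite first moment; the tail index is then carried by the regularly varying correction $J(s)-J(0^+)$ and its derivatives. Writing $\alpha\in(p,p+1]$ I would differentiate $\exp(v)$ $p$ times, isolate via Fa\`a di Bruno (as in Lemma~\ref{lem:Bruno}) the dominant term $e^{v(0^-)}\,v^{(p)}$, and compute $v^{(p)}$ from the differentiated Stieltjes integrals $\int (s+t)^{-k}\sigma(\dd t)$, which inherit regular variation of index $p+1-\alpha$ from the left tail of $\sigma$; matching against Theorem~\ref{thm:p} gives $\mu_\boxtimes^{\gamma,\sigma}\big((\cdot,+\infty)\big)\in\mathcal{R}_{-\alpha}$.

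The main obstacle lies in the two boundary regimes. For $\alpha>1$ the difficulty is that the leading constant in $S^{(p)}_{\mu_\boxtimes^{\gamma,\sigma}}(-1/x)$ must involve only the first moment $m_1(\mu_\boxtimes^{\gamma,\sigma})$, exactly as in Theorem~\ref{thm:p}, so the Fa\`a di Bruno bookkeeping and the differentiation under the integral sign have to be controlled with enough uniformity that all lower-order moment contributions are absorbed into the subcritical remainder. The critical case $\alpha=1$ is the genuinely delicate one: there $J(s)$ sits precisely at the threshold of convergence, and without the hypothesis on $\lim_{y\to+\infty}y\sigma\big([0,1/y)\big)$ the function $1/S_{\mu_\boxtimes^{\gamma,\sigma}}(-1/x)$ need not belong to any single $\Pi$-class; it is this assumption that restores membership in $\Pi_M(1)$ and thus lets Theorem~\ref{thm:alpha1} close the argument.
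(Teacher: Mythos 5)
Your overall architecture coincides with the paper's: the corollary is deduced from the three regime theorems (Theorems \ref{thm:IDD1}, \ref{thm:IDD2}, \ref{thm:IDD3}) applied to the decomposition of $v(-1/x)$ into a Stieltjes-type integral of $\hat{\sigma}$, and the left-tail statement is reduced to the right-tail one exactly as in the paper via $\hat{\mu}_\boxtimes^{\gamma,\sigma}=\mu_\boxtimes^{-\gamma,\hat{\sigma}}$. The cases $\alpha\in[0,1)$ and $\alpha>1$ are set up correctly and follow the paper's route (Corollary \ref{cor:expv} in the first case; isolating $e^{v(0^-)}v^{(p)}$ and matching against Theorem \ref{thm:p} in the second).

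However, your treatment of the critical case $\alpha=1$ contains a genuine error. The Tauberian step gives $x\mapsto -v(-1/x)\in\Pi_L(1)$ with $L(y)\sim y\,\sigma\big([0,1/y)\big)$, and this already holds \emph{without} the extra hypothesis; the hypothesis that $d:=\lim_{y\to+\infty}y\,\sigma\big([0,1/y)\big)$ exists is needed for a different purpose, namely to control the exponential. Taking $\exp$ does not preserve membership in a de Haan class: writing $1/S_{\mu_\boxtimes^{\gamma,\sigma}}(-1/x)=e^{-v(-1/x)}$, the paper's Lemma \ref{lem:expPi} shows that this function lies in $K\mathcal{R}_\infty$ if $d=+\infty$, in $\mathcal{R}_d$ if $d\in(0,+\infty)$, and only in the case $d=0$ does it land in $\Pi_M(1)$. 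Hence your assertion that the hypothesis ``restores membership in $\Pi_M(1)$ and thus lets Theorem \ref{thm:alpha1} close the argument'' fails whenever $d>0$: for the measure of Example \ref{example:1.66} with $\alpha=1$ one has $d=c>0$, the function $1/S_{\mu_\boxtimes^{\gamma,\sigma}}(-1/x)$ is regularly varying of index $c$, and the right tail of $\mu_\boxtimes^{\gamma,\sigma}$ has index $-1/(1+c)$, not $-1$; the conclusion must come from Theorem \ref{thm:01} (resp.\ Theorem \ref{thm:a0} when $d=+\infty$) rather than Theorem \ref{thm:alpha1}. So the $\alpha=1$ case requires the trichotomy of Lemma \ref{lem:expPi} — which is precisely the ingredient missing from your argument — and a single application of Theorem \ref{thm:alpha1} would produce the wrong tail index in a generic situation.
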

We conjecture that an additional condition for $\alpha=1$ is not necessary and that the converse is also true: regular variation of tails of $\mu_\boxtimes^{\gamma,\sigma}$ implies the regular variation of tails of $\sigma$.

We will now proceed to the description of the right tail $\mu_\boxtimes^{\gamma,\sigma}$ in terms of the asymptotics of $\sigma\big([0,x)\big)\sim x^\alpha L(1/x)$ as $x\to 0^+$, $L\in\mathcal{R}_0$. The case $\alpha\in[0,1)$ is treated in Theorem \ref{thm:IDD1}, the case $\alpha=1$ (which is most complex) is dealt with in Theorem \ref{thm:IDD2}, while $\alpha\in[p,p+1]$ for $p\in\mathbb{N}$ can be found in Theorem \ref{thm:IDD3}. Recall that by $\hat{\sigma}$ we denote the pushforward of $\sigma$ under the mapping $x\mapsto 1/x$.
\begin{thm}\label{thm:IDD1}
	Let $\alpha\in[0,1)$ and let $L\in\mathcal{R}_0$. Assume that $\sigma$ and $v$ are related by \eqref{eq:idd}. The following two conditions are equivalent:
	\begin{align}
		\sigma\big([0,x)\big)&\sim x^\alpha L(1/x)\qquad\mbox{ as }\qquad x\to0^+, \label{eq:v1}\\
		v(-1/x)&\sim - \frac{\pi\alpha}{\sin(\pi\alpha)} x^{1-\alpha}L(x)\qquad\mbox{ as }\qquad x\to+\infty\label{eq:v2}.
	\end{align}
	Each of these equivalent conditions imply that for all $\gamma\in\R$,
	\begin{align}\label{eq:v3}
		\mu_\boxtimes^{\gamma,\sigma}\big((x,+\infty)\big)&\sim \left( \frac{\pi\alpha}{\sin(\pi\alpha)} \right)^{1/(1-\alpha)}   \frac{1}{\log(x) ^{1/(1-\alpha)} L_\alpha^\#(\log(x)^{1/(1-\alpha)})}\quad\mbox{as}\quad x\to+\infty,
	\end{align}
	where $L_\alpha:=L^{1/(1-\alpha)}\in\mathcal{R}_0$.
\end{thm}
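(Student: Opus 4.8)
The plan is to transfer the regular variation of the left tail of $\sigma$ onto the growth of $v(-1/x)$ through a Stieltjes transform, and then read off the right tail of $\mu_\boxtimes^{\gamma,\sigma}$ from the exponential representation $S_{\mu_\boxtimes^{\gamma,\sigma}}=\exp(v)$ by invoking Corollary~\ref{cor:expv}. First I would substitute $z=-1/(x-1)$ into \eqref{eq:idd}: a short computation gives $z/(1-z)=-1/x$, and writing $s:=-z=1/(x-1)$ the right-hand side of \eqref{eq:idd} becomes
\begin{align*}
v\left(-\frac1x\right)=\gamma-\frac1s\,\sigma(\{0\})+s\,\sigma(\{+\infty\})-\int_{(0,+\infty)}\frac{1-ts}{s+t}\,\sigma(\dd t).
\end{align*}
Since $1-\alpha>0$ forces $x^{1-\alpha}L(x)\to+\infty$, the constant $\gamma$ and the term $s\,\sigma(\{+\infty\})=O(1/x)$ are negligible; splitting $\frac{1-ts}{s+t}=\frac1{s+t}-\frac{ts}{s+t}$ and using $s\int_{(0,+\infty)}\frac{t}{s+t}\sigma(\dd t)\le s\,\sigma\big((0,+\infty)\big)\to0$ shows that the only surviving contribution is the Stieltjes transform $I(s):=\int_{(0,+\infty)}\sigma(\dd t)/(s+t)$. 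Under \eqref{eq:v1} with $\alpha\in(0,1)$ one has $\sigma(\{0\})=0$ automatically (since $\sigma([0,x))\to0$), and the case $\alpha=0$ is handled directly with $\sigma(\{0\})=0$; thus \eqref{eq:v2} is equivalent to $I(s)\sim\frac{\pi\alpha}{\sin(\pi\alpha)}x^{1-\alpha}L(x)$, where $x=1/s$.

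Next I would pass to the pushforward $\hat\sigma$ of $\sigma$ under $t\mapsto1/t$, so that \eqref{eq:v1} becomes $\overline{\hat\sigma}(y):=\hat\sigma\big((y,+\infty)\big)\sim y^{-\alpha}L(y)$ as $y\to+\infty$, i.e.\ $\hat\sigma$ has a regularly varying right tail of index $-\alpha$. The change of variables $t=1/u$ gives
\begin{align*}
I(s)=\int_{(0,+\infty)}\frac{u}{1+su}\,\hat\sigma(\dd u)=x\int_{(0,+\infty)}\frac{u}{u+x}\,\hat\sigma(\dd u),\qquad x=\frac1s.
\end{align*}
The inner integral is exactly the object estimated in the proofs of Theorems~\ref{thm:01} and \ref{thm:a0}: integration by parts writes it as $k\stackrel{M}{\ast}\overline{\hat\sigma}(x)$ with $k(u)=u/(1+u)^2$, whose Mellin transform is $\check k(z)=\pi z/\sin(\pi z)$, so $\check k(0)=1$ and $\check k(-\alpha)=\pi\alpha/\sin(\pi\alpha)$ for every $\alpha\in[0,1)$. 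Those arguments (Theorem~\ref{thm:TaubSt} for $\alpha\in(0,1)$, Theorem~\ref{thm:tauber}$(i)$ for $\alpha=0$, now applied to the finite measure $\hat\sigma$) give $\int_{(0,+\infty)}\frac{u}{u+x}\hat\sigma(\dd u)\sim\frac{\pi\alpha}{\sin(\pi\alpha)}x^{-\alpha}L(x)$, hence $I(s)\sim\frac{\pi\alpha}{\sin(\pi\alpha)}x^{1-\alpha}L(x)$, which is \eqref{eq:v2}. For the converse I would use that $y\mapsto\overline{\hat\sigma}(y)$ is nonincreasing, so the Tauberian (converse) halves of the same theorems apply and recover \eqref{eq:v1}.

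It remains to deduce \eqref{eq:v3}. Writing $S_{\mu_\boxtimes^{\gamma,\sigma}}(-1/x)=\exp\big(v(-1/x)\big)=\exp(-s(x))$ with $s(x):=\frac{\pi\alpha}{\sin(\pi\alpha)}x^{1-\alpha}L(x)\in\mathcal{R}_{1-\alpha}$, condition \eqref{eq:v2} places us exactly in the setting of Corollary~\ref{cor:expv} with $\rho=1-\alpha>0$. Choosing $L_0:=\big(\tfrac{\pi\alpha}{\sin(\pi\alpha)}\big)^{1/(1-\alpha)}L_\alpha$, so that $s(x)=x^{1-\alpha}L_0^{\,1-\alpha}(x)$, the corollary gives
\begin{align*}
\mu_\boxtimes^{\gamma,\sigma}\big((x,+\infty)\big)\sim\frac1{\log(x)^{1/(1-\alpha)}\,L_0^\#\big(\log(x)^{1/(1-\alpha)}\big)}.
\end{align*}
Since $L_0=c\,L_\alpha$ with $c=\big(\tfrac{\pi\alpha}{\sin(\pi\alpha)}\big)^{1/(1-\alpha)}$, the defining relation of the de Bruijn conjugate together with slow variation gives $L_0^\#\sim c^{-1}L_\alpha^\#$, and substituting this yields precisely \eqref{eq:v3}.

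I expect the main obstacle to be the second step: controlling the atoms of $\sigma$ and the secondary term $s\int_{(0,+\infty)}\frac{t}{s+t}\sigma(\dd t)$ uniformly across $\alpha\in[0,1)$, and in particular justifying the Tauberian (converse) direction at the boundary $\alpha=0$, where Theorem~\ref{thm:TaubSt} degenerates and one must instead rely on the slowly varying Mellin--Tauberian statement of Theorem~\ref{thm:tauber}. The monotonicity of $\overline{\hat\sigma}$ is exactly what makes this converse step go through.
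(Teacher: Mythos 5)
Your argument is correct and is essentially the paper's own proof: the paper likewise reduces \eqref{eq:v1}$\Leftrightarrow$\eqref{eq:v2} to the asymptotics of $u(x)=x\int_{[0,+\infty)}\tfrac{t}{t+x}\,\hat{\sigma}(\dd t)$ (which is your $I(s)$ with $s=1/x$, after discarding exactly the same negligible terms), runs the Stieltjes/Mellin Tauberian argument for the finite measure $\hat{\sigma}$ as in Theorems \ref{thm:01} and \ref{thm:a0}, and then deduces \eqref{eq:v3} from Corollary \ref{cor:expv} with the same de Bruijn--conjugate bookkeeping for the constant $\left(\tfrac{\pi\alpha}{\sin(\pi\alpha)}\right)^{1/(1-\alpha)}$. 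The one point you do not actually settle is $\alpha=0$ with $\sigma(\{0\})>0$ (e.g.\ $\sigma=\delta_0$, for which \eqref{eq:v1} holds with $L\to\sigma(\{0\})$) --- you simply posit that the atom vanishes --- whereas the paper dispatches it in one line: by dominated convergence $u(x)/x\to\sigma(\{0\})$, so $v(-1/x)\sim-\sigma(\{0\})\,x\sim-xL(x)$ and the equivalence is immediate; this is a trivial patch rather than a structural flaw.
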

\begin{proof}
	From \eqref{eq:v1} we obtain $\hat{\sigma}\big((x,+\infty]\big) \sim x^{-\alpha} L(x)$. Define 
	\[
	u(x):=x\int_{[0,+\infty)} \frac{1}{1+x t}\sigma(\dd t) = x\left(\sigma(\{0\})+ \int_{(0,+\infty)} \frac{t}{t+x}\hat{\sigma}(\dd t)\right).
	\]
	By Lebesgue's Dominated Convergence Theorem we have $u(x)/x \to \sigma(\{0\})$ as $x\to+\infty$. If $\sigma(\{0\})=0$ and \eqref{eq:v1} holds, then a more precise behavior of $u$ is available. Indeed, arguments similar to those used in the proof of Theorem \ref{thm:01} imply that in such case \eqref{eq:v1} is equivalent to 
	\[
	u(x)\sim \frac{\pi\alpha}{\sin(\pi\alpha)}x^{1-\alpha}L(x).
	\]
	Clearly, the above condition is true also in the case of $\sigma(\{0\})=\lim_{x\to0^+} x^0 L(1/x)>0$. 
	
	Direct calculations show that
	\[
	v\left(-\frac{1}{x+1}\right)=-u(x) + \gamma+\sigma(\{+\infty\}) \frac1x + \int_{(0,+\infty)} \frac{1}{t+x}\hat{\sigma}(\dd t) = -u(x)+\gamma+o(1).
	\]
	Clearly, we have $v(-1/(1+x))\sim -u(x)$ and the regular variation implies that $v(-1/(1+x))\sim v(-1/x)$.
	Thus, we have proved equivalence between \eqref{eq:v1} and \eqref{eq:v2}. 
	
	Since $S_{\mu_\boxtimes^{\gamma,\sigma}}(-1/x)=\exp(v(-1/x))$, under \eqref{eq:v2}, Corollary \ref{cor:expv} implies that 
	\[
	\mu_\boxtimes^{\gamma,\sigma}\big((x,+\infty)\big)\sim 1/ s^{\langle-1\rangle}(\log(x)),
	\]	
	where $s(t):=-v(-1/t)=\frac{\pi\alpha}{\sin(\pi\alpha)}t^{1-\alpha}L(t)$.
	Let $a=1-\alpha$ and recall that an asymptotic inverse of $t\mapsto t^{1-\alpha}L(t)=t^a L_\alpha^a(t)$ is $x\mapsto x^{1/a}L_\alpha^\#(x^{1/a}) $. Thus, 
	\[
	s^{\langle-1\rangle}(x)\sim \left(\frac{\sin(\pi\alpha)}{\pi\alpha}\right)^{1/(1-\alpha)} x^{1/(1-\alpha)} L_\alpha^\#(x^{1/(1-\alpha)})
	\]
	and the result follows.
\end{proof}

In the result below we show that in the case $\alpha=1$, the right tail of $\mu_\boxtimes^{\gamma,\sigma}$ is regularly varying with index depending on a limit at $+\infty$ of $L(x)\sim x\,\sigma\big([0,x^{-1})\big)$. We refrained from exhibiting {the slowly varying function related with right tail of $\mu_\boxtimes^{\gamma,\sigma}$} as it depends on $\sigma$ in a {quite} complicated way.  Nevertheless, it is hidden in the proof of Theorem \ref{thm:IDD2}. We do not know whether a similar result {holds when} $L$ does not have a limit at infinity.
\begin{thm}\label{thm:IDD2} 	Let $L\in\mathcal{R}_0$. 
	\begin{enumerate}[(i)]
		\item The following two conditions are equivalent:
		\begin{align}
			\sigma\big([0,&x)\big)\sim x L(1/x)\qquad\mbox{ as }\qquad x\to0^+, \label{eq:vv1}\\
			x\mapsto &v(-1/x)\in \Pi_L(-1)\label{eq:vv2}.
		\end{align}
	\item 	If $d:=\lim_{x\to+\infty}L(x)$ exists, then each of these equivalent conditions implies that
	\begin{align}\label{eq:Scases}
		x\mapsto \frac{1}{S_{\mu_\boxtimes^{\gamma,\sigma}}(-1/x)}\in\begin{cases}
			K\mathcal{R}_\infty, & \mbox{ if }d=+\infty, \\
			\mathcal{R}_d, & \mbox{ if }d\in(0,+\infty),\\
			\Pi_M(1), & \mbox{ if }d=0,
		\end{cases} 
	\end{align}
	where $M(x):=L(x)/S_{\mu_\boxtimes^{\gamma,\sigma}}(-1/x)$.
	\item If $d=0$ assume additionally that $\int_{(0,+\infty)}t^{-1}\sigma(\dd t)=+\infty$. 
	Then, \eqref{eq:Scases} implies that 
	\[
	x\mapsto\mu_\boxtimes^{\gamma,\sigma}\big((x,+\infty)\big)\in \mathcal{R}_{-1/(1+d)}.
	\]
	\end{enumerate}
\end{thm}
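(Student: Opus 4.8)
The plan is to reduce all three parts to the single function
\[
u(x):=x\int_{[0,+\infty)}\frac{1}{1+xt}\,\sigma(\dd t)=x\,\sigma(\{0\})+x\int_{(0,+\infty)}\frac{t}{t+x}\,\hat\sigma(\dd t),
\]
exactly as in the proof of Theorem \ref{thm:IDD1}. Substituting $z=-1/x$ into \eqref{eq:idd} (so that $z/(1-z)=-1/(x+1)$) and pushing forward under $t\mapsto 1/t$ yields the identity
\[
v\Big(-\frac{1}{x+1}\Big)=-u(x)+\gamma+\sigma(\{+\infty\})\frac1x+\int_{(0,+\infty)}\frac{1}{t+x}\,\hat\sigma(\dd t),
\]
whose right-hand side equals $-u(x)+\gamma+o(x^{-\eps})$ for any $\eps\in(0,1)$, since $\hat\sigma$ is finite. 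Throughout I write $S(z):=S_{\mu_\boxtimes^{\gamma,\sigma}}(z)$ and $\bar\mu$ for the right tail of $\mu_\boxtimes^{\gamma,\sigma}$, and record that $S(-1/x)=\exp\big(v(-1/x)\big)$, so that everything passes through $u$ and its exponential.

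For part $(i)$, note that \eqref{eq:vv1} (equivalently $u\in\Pi_L(1)$, since $\Pi_L(1)\subset\mathcal{R}_0$) forces $\sigma(\{0\})=0$, so $\hat\sigma(\{+\infty\})=0$ and $u(x)=x\int_{[0,+\infty)}\frac{t}{t+x}\hat\sigma(\dd t)$. Rewriting \eqref{eq:vv1} as $\hat\sigma\big((x,+\infty)\big)=\sigma\big([0,1/x)\big)\sim x^{-1}L(x)$, this is precisely condition $(i)$ of Theorem \ref{thm:taubA1} for the normalised finite measure $\hat\sigma$, while $u$ is exactly the function in condition $(iii)$. Hence Theorem \ref{thm:taubA1} gives $\eqref{eq:vv1}\iff u\in\Pi_L(1)$. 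Combining with the displayed identity, Lemma \ref{lem:simpleT}$(ii)$ (to absorb $\gamma$ and the $o(x^{-\eps})$ term) and the stability of $\Pi_L$ under the shift $x\mapsto x+1$, I obtain $u\in\Pi_L(1)\iff v(-1/x)\in\Pi_L(-1)$, which is \eqref{eq:vv2}.

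For part $(ii)$ I would work with the ratio. Since $-v(-1/\cdot)\in\Pi_L(1)$,
\[
\frac{1/S(-1/(\lambda x))}{1/S(-1/x)}=\exp\big(v(-1/x)-v(-1/(\lambda x))\big)=\exp\big(L(x)(\log\lambda+o(1))\big).
\]
Letting $x\to+\infty$: if $d=+\infty$ this tends to $+\infty$ for every $\lambda>1$, so $1/S(-1/\cdot)\in\mathcal{R}_\infty$; since $S_\mu$ is decreasing (Proposition \ref{thm:useful}$(ii)$), the map $x\mapsto 1/S(-1/x)$ is nondecreasing, and Theorem \ref{thm:KR}$(ii)$ upgrades this to $K\mathcal{R}_\infty$. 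If $d\in(0,+\infty)$ the ratio tends to $\lambda^d$, i.e. $1/S(-1/\cdot)\in\mathcal{R}_d$. If $d=0$ the ratio tends to $1$, so $1/S(-1/\cdot)\in\mathcal{R}_0$; refining via $\exp(L(x)(\log\lambda+o(1)))-1\sim L(x)\log\lambda$ gives $1/S(-1/(\lambda x))-1/S(-1/x)\sim M(x)\log\lambda$ with $M(x)=L(x)/S(-1/x)\in\mathcal{R}_0$, that is $1/S(-1/\cdot)\in\Pi_M(1)$.

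For part $(iii)$ I would feed \eqref{eq:Scases} into the converse half of the matching theorem of Section \ref{sec:4}. When $d=+\infty$, $1/S(-1/\cdot)\in K\mathcal{R}_\infty$ is exactly \eqref{eq:SinKR}, and Theorem \ref{thm:a0} gives $\bar\mu\in\mathcal{R}_0=\mathcal{R}_{-1/(1+d)}$. When $d\in(0,+\infty)$, $1/S(-1/\cdot)\in\mathcal{R}_d$ means $S(-1/\cdot)\in\mathcal{R}_{1-1/\alpha}$ with $\alpha:=1/(1+d)\in(0,1)$; writing $S(-1/x)=x^{1-1/\alpha}\ell(x)$ and choosing the slowly varying function so as to match the shape of \eqref{eq:S01} through a de Bruijn conjugate, the converse of Theorem \ref{thm:01} yields $\bar\mu\in\mathcal{R}_{-\alpha}=\mathcal{R}_{-1/(1+d)}$. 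In the delicate case $d=0$, the extra hypothesis $\int_{(0,+\infty)}t^{-1}\sigma(\dd t)=+\infty$ is equivalent to $\int_1^{\infty}L(t)/t\,\dd t=+\infty$, which forces $v(-1/x)\to-\infty$, i.e. $1/S(-1/x)\to+\infty$ and $m_1(\mu_\boxtimes^{\gamma,\sigma})=+\infty$, placing us in the genuine index-one regime; then $f_\mu(x):=(x-1)/S(-1/x)\in\mathcal{R}_1$ is increasing (the function from Theorem \ref{thm:a0}), and I would set $L_\mu(y):=L\big(f_\mu^{\langle-1\rangle}(y)\big)\,y/f_\mu^{\langle-1\rangle}(y)\in\mathcal{R}_0$, so that $L_\mu(f_\mu(x))\sim L(x)/S(-1/x)=M(x)$; the converse of Theorem \ref{thm:alpha1} then gives $\bar\mu(x)\sim x^{-1}L_\mu(x)\in\mathcal{R}_{-1}=\mathcal{R}_{-1/(1+d)}$.

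The step I expect to be hardest is this last $d=0$ case: recasting $M$ in the precise form $L_\mu\big((x-1)/S(-1/x)\big)$ demanded by Theorem \ref{thm:alpha1} requires inverting $f_\mu$ and controlling the associated conjugacy, a point the remark following Theorem \ref{thm:a0} flags as genuinely subtle (asymptotic equivalence of slowly varying functions need not survive inversion), so one must work with the exact increasing function $f_\mu$ rather than only its asymptotics. A secondary technical point is the $\mathcal{R}_\infty\Rightarrow K\mathcal{R}_\infty$ upgrade in part $(ii)$, for which the monotonicity of $x\mapsto 1/S(-1/x)$ supplied by Proposition \ref{thm:useful}$(ii)$ is essential.
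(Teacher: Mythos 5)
Your proposal is correct and follows essentially the same route as the paper: part $(i)$ via the same identity relating $v(-1/(x+1))$ to $u(x)$ together with Theorem \ref{thm:taubA1}, part $(ii)$ by re-deriving inline what the paper isolates as Lemma \ref{lem:expPi} (exponentials of $\Pi_L$-functions, with the same monotonicity input for the $K\mathcal{R}_\infty$ upgrade), and part $(iii)$ by feeding \eqref{eq:Scases} into the converses of Theorems \ref{thm:a0}, \ref{thm:01} and \ref{thm:alpha1}, including the same reparametrisation of the slowly varying function through the increasing function $x\mapsto(x-1)/S_{\mu_\boxtimes^{\gamma,\sigma}}(-1/x)$ in the delicate $d=0$ case. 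The only place the paper is slightly more explicit is the verification of $\int_1^{+\infty}\widetilde{L}(t)/t\,\dd t=+\infty$ (its condition \eqref{eq:toboproved}), which it deduces from exactly the fact you single out, namely that $1/S_{\mu_\boxtimes^{\gamma,\sigma}}(-1/x)\to+\infty$ because $v(0^-)=-\infty$.
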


The proof of Theorem \ref{thm:IDD2} is based on the following result.
\begin{lemma}\label{lem:expPi}
	Assume that $L\in\mathcal{R}_0$ and $f\in\Pi_L(c)$, $c>0$. Let $g(x)=\exp(f(x))$.  Assume that $L(x)$ has a limit as $x\to+\infty$ and define $d=\lim_{x\to+\infty}L(x)$. If $d=+\infty$ assume additionally that $g$ is nondecreasing.
	We have 
	\[
	g \in \begin{cases}
		K\mathcal{R}_{\infty}, & \mbox{ if }d=+\infty,\\
		\mathcal{R}_{cd}, & \mbox{ if }d\in(0,+\infty),\\
		\Pi_{g\cdot L}(c), & \mbox{ if }d=0.
	\end{cases}
	\]
\end{lemma}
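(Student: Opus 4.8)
The plan is to reduce everything to a single elementary identity. Writing $g(x)=\exp(f(x))$, for every fixed $\lambda>0$ we have
\[
\frac{g(\lambda x)}{g(x)} = \exp\bigl(f(\lambda x)-f(x)\bigr),
\]
and the hypothesis $f\in\Pi_L(c)$ means precisely that $f(\lambda x)-f(x)=L(x)\bigl(c\log\lambda+o(1)\bigr)$ as $x\to+\infty$. The entire behavior of $g$ is therefore governed by the limit $d=\lim_{x\to+\infty}L(x)$, and I would simply split into the three announced cases.

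First, for $d\in(0,+\infty)$: then $f(\lambda x)-f(x)\to cd\log\lambda$, so $g(\lambda x)/g(x)\to\lambda^{cd}$ for every $\lambda>0$, which is exactly the statement $g\in\mathcal{R}_{cd}$. Second, for $d=+\infty$: for any $\lambda>1$ the coefficient $c\log\lambda$ is positive, hence $f(\lambda x)-f(x)\to+\infty$ and $g(\lambda x)/g(x)\to+\infty$, i.e. $g\in\mathcal{R}_\infty$. Here the nondecreasing assumption on $g$ lets me invoke Theorem~\ref{thm:KR}~(ii) to upgrade $\mathcal{R}_\infty$ to $K\mathcal{R}_\infty$.

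The only case requiring a slightly finer argument is $d=0$, where I would use the first-order expansion of the exponential. Since $L(x)\to0$, the difference $f(\lambda x)-f(x)\to0$, so using $e^y-1\sim y$ as $y\to0$,
\[
\frac{g(\lambda x)-g(x)}{g(x)\,L(x)} = \frac{e^{f(\lambda x)-f(x)}-1}{L(x)} = \frac{f(\lambda x)-f(x)}{L(x)}\bigl(1+o(1)\bigr)\longrightarrow c\log\lambda,
\]
which is the definition of $g\in\Pi_{g\cdot L}(c)$. Before this can be asserted I must check that the normalizing function $M=g\cdot L$ indeed lies in $\mathcal{R}_0$; this follows because $g(\lambda x)/g(x)=e^{o(1)}\to1$ (so that $g\in\mathcal{R}_0$ here) and $L\in\mathcal{R}_0$, whence their product is slowly varying.

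I do not expect any serious obstacle: the proof is essentially the observation that exponentiating a $\Pi_L(c)$ function transfers the additive second-order structure into a multiplicative (regular-variation) one as soon as $L$ stabilizes. The most delicate point is the bookkeeping in the case $d=0$, namely making sure that the $o(1)$ coming from the $\Pi_L$ hypothesis and the $o(1)$ coming from the exponential expansion combine correctly and that $g\cdot L\in\mathcal{R}_0$, but this is routine.
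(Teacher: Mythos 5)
Your proof is correct, and for the cases $d=+\infty$ and $d=0$ it coincides with the paper's argument almost verbatim: the same computation $\log\bigl(g(\lambda x)/g(x)\bigr)=\frac{f(\lambda x)-f(x)}{L(x)}L(x)\sim c\log(\lambda)L(x)$ followed by Theorem \ref{thm:KR} $(ii)$ in the first case, and the same expansion $\frac{e^{f(\lambda x)-f(x)}-1}{L(x)}\to c\log(\lambda)$ in the third. The only genuine divergence is the case $d\in(0,+\infty)$: the paper invokes the integral representation of Theorem \ref{thm:PiL} $(ii)$, substitutes $L(x)=d+o(1)$, and then uses Karamata's representation theorem to exhibit $g(x)=x^{cd}\ell(x)$ with $\ell$ slowly varying, whereas you verify the defining limit $g(\lambda x)/g(x)\to\lambda^{cd}$ directly from $f(\lambda x)-f(x)\to cd\log\lambda$. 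Your route is more elementary and shorter; the paper's buys an explicit Karamata-type representation of $g$, which is not needed for the statement (the two characterizations of $\mathcal{R}_{cd}$ agree for measurable functions, and $g=e^{f}$ is measurable). Your additional check that $g\cdot L\in\mathcal{R}_0$ in the case $d=0$ is a small point the paper leaves implicit, and it is welcome since the class $\Pi_{g\cdot L}(c)$ is only defined for a slowly varying normalizer.
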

\begin{proof}
	We start with $d=+\infty$. In view of Theorem \ref{thm:KR} $(ii)$ it is enough to show that $g\in\mathcal{R}_\infty$, that is,
	$g(\lambda x)/g(x)\to+\infty$ as $x\to+\infty$ for all $\lambda>0$. We have as $x\to+\infty$,
	\[
	\log\left(\frac{g(\lambda x)}{g(x)}\right)=\frac{f(\lambda x)-f(x)}{L(x)}L(x)\sim c\log(\lambda)L(x)\to+\infty.
	\]

	Assume that $d\in(0,+\infty)$. We have $L(x)=d+o(1)$. By Theorem \ref{thm:PiL} we have for $C, D\in\R$ and $x_0\geq0$,
	\begin{align*}
		g(x)&=e^{f(x)}=\exp\left(C+c \int_{x_0}^x (1+o(1))\frac{L(t)}{t}\dd t + D(1+o(1))L(x)\right)\\
		&=\exp\left(c\,d\log(x)+\widetilde{C}+o(1)+ c_1 \int_{x_0}^x \frac{o(1)}{t}\dd t\right)= x^{c d}e^{\widetilde{C}+o(1)+ c_1 \int_{x_0}^x \frac{o(1)}{t}\dd t},
	\end{align*}
	where we have substituted $L(x)=d+o(1)$ in the third equality above. By \cite[Theorem 1.3.1]{BGT89} function $\ell(x):=\exp(C+o(1)+\int_1^x \frac{o(1)}{t}\dd t)$ is slowly varying. 
	
	If $d=0$, then 
	\[
	\frac{g(\lambda x)-g(x)}{g(x)L(x)}=\frac{e^{f(\lambda x)}-e^{f(x)}}{e^{f(x)}L(x)} = \frac{e^{\frac{f(\lambda x)-f(\lambda)}{L(x)}L(x)}-1}{L(x)}\to c\log(\lambda),
	\]
	which ends the proof.
\end{proof}

\begin{proof}[Proof of Theorem~\ref{thm:IDD2}]
	We first show the equivalence between \eqref{eq:vv1} and \eqref{eq:vv2}. We proceed as in the proof of Theorem \ref{thm:IDD1}. With $u(x)=x\int_{[0,+\infty)} \frac{t}{t+x}\hat{\sigma}(\dd t)$ we have 
	\[
	v\left(-\frac{1}{x}\right)=-u(x-1) + \gamma+\sigma(\{+\infty\}) \frac1{x-1} + \int_{(0,+\infty)} \frac{1}{t+x-1}\hat{\sigma}(\dd t) = -u(x-1)+\gamma+o(x^{-\eps})
	\] 
	for all $\eps\in(0,1)$. By Theorem \ref{thm:taubA1} we have the equivalence of \eqref{eq:vv1} and $u\in\Pi_L(1)$. Since $u(x)-u(x-1)=o(x^{-\eps})$, by Lemma \ref{lem:simpleT}, we have $x\mapsto u(x-1)\in\Pi_L(1)$. Thus, again using  Lemma \ref{lem:simpleT} we obtain \eqref{eq:vv2} and prove the first part of the assertion.
	
	For $(ii)$, recall that 
	\[
	\frac{1}{S_{\mu_\boxtimes^{\gamma,\sigma}}(-1/x)}= e^{-v(-1/x)}.
	\]
	Under \eqref{eq:vv2} we have $x\mapsto -v(-1/x)\in \Pi_L(1)$ and so \eqref{eq:Scases} follows from Lemma \ref{lem:expPi}.
	
	Point $(iii)$ for $d=+\infty$ follows from Theorem \ref{thm:a0} and for $d\in(0,+\infty)$ from Theorem \ref{thm:01}. The case of $d=0$ requires more work. 
	
Since $\int_{(0,+\infty)}t^{-1}\sigma(\dd t)=+\infty$, we have $v(0^-)=-\infty$. Define a function $\widetilde{L}\in\mathcal{R}_0$ by (recall that $(x-1)/S_\mu(-1/x)=-1/\chi_\mu(-1/x)$ is monotonic and continuous)
	\begin{align}\label{eq:Ltilde}
		\widetilde{L}\left(\frac{x-1}{S_\mu(-1/x)}\right )=	\frac{L(x)}{S_\mu(-1/x)}.
	\end{align}
	Thus, \eqref{eq:Scases} for $d=0$ implies that
	\[
	x\mapsto \frac{1}{S_{\mu_\boxtimes^{\gamma,\sigma}}(-1/x)} \in \Pi_{\widetilde{M}}(1),
	\]
	where $\widetilde{M}(x):=\widetilde{L}\left((x-1)/S_\mu(-1/x)\right )$. 
	
In view of Theorem \ref{thm:alpha1}, if we show that
	\begin{align}\label{eq:toboproved}
		\int_1^{+\infty} \widetilde{L}(t)/t\,\dd t=+\infty,
	\end{align} then we will have
	\[
	\mu_\boxtimes^{\gamma,\sigma}\big((x,+\infty)\big) \sim \widetilde{L}(x)/x.
	\]
	Denote $f_\mu(x)=(x-1)/S_\mu(-1/x)$ and observe that \eqref{eq:Ltilde} gives
	\[
	\frac{\widetilde{L}(f_\mu(x))}{f_\mu(x)} f_\mu^\prime(x)=\frac{L(x)}{x-1} f_\mu^{\prime}(x).
	\]
	Thus, we see that \eqref{eq:toboproved} holds true if 
	\[
	\int_1^x \frac{L(t)}{t-1} f_\mu^{\prime}(t)\dd t\to +\infty\qquad\mbox{as}\qquad x\to+\infty.
	\]
	But $\frac{L(t)}{t-1} f_\mu^{\prime}(t) \sim \left( {1}/{S_{\mu_\boxtimes^{\gamma,\sigma}}(-1/t)}\right)^\prime$ and ${1}/{S_{\mu_\boxtimes^{\gamma,\sigma}}(-1/x)}\to+\infty$ by the fact that $v(0^-)=-\infty$.
\end{proof}

We extend a definition of $\cM_+$ to finite measures on $[0,+\infty]$. Let $\mathcal{P}_+$ denote the set of finite Borel measures on $\overline{\R}_+$ and define for $p\in\mathbb{N}\cup\{0\}$,
\[
\widetilde{\mathcal{M}}_{p}:=\{\sigma\in\mathcal{P}_+\colon m_{p}(\sigma)<+\infty\mbox{ and }m_{p+1}(\sigma)=+\infty \}.
\]
It is easy to see that we have $v(0^-)=+\infty$ if and only if $\int_{[0,+\infty]}t^{-1}\sigma(\dd t)=m_1(\hat{\sigma})=+\infty$. Thus, by Proposition \ref{thm:useful} $(iii)$ we have 
\[
\mu_\boxtimes^{\gamma,\sigma}\in \mathcal{M}_0\qquad\mbox{if and only if}\qquad\hat{\sigma}\in\widetilde{\mathcal{M}}_{0}.
\] 
\begin{lemma}
	Let $p\in\mathbb{N}$. We have 
	\[
	\mu_\boxtimes^{\gamma,\sigma}\in \mathcal{M}_p\qquad\mbox{if and only if}\qquad \hat{\sigma}\in\widetilde{\mathcal{M}}_p.
	\]
\end{lemma}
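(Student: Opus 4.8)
The plan is to reduce the statement to the single equivalence
\[
m_p(\mu)<+\infty \iff m_p(\hat\sigma)<+\infty, \qquad \mu:=\mu_\boxtimes^{\gamma,\sigma},
\]
holding for every $p\in\mathbb{N}$. Granting this for all $p$, the definitions of $\mathcal{M}_p$ and $\widetilde{\mathcal{M}}_p$ (each class being singled out by its first \emph{infinite} moment) give $\mu\in\mathcal{M}_p\iff\hat\sigma\in\widetilde{\mathcal{M}}_p$ at once, because the moments of the probability measure $\mu$ and those of the finite measure $\hat\sigma$ are each nested: finiteness of one moment forces finiteness of all lower ones. Setting $\eta(z):=v(z/(1-z))$, so that by \eqref{eq:idd} one has $\eta(z)=\gamma+\int_{[0,+\infty]}\tfrac{1+tz}{z-t}\sigma(\dd t)$, I first record the boundary case: $m_1(\mu)<+\infty\iff v(0^-)\text{ finite}\iff m_1(\hat\sigma)<+\infty$, the middle equivalence being $S_\mu(0^-)=1/m_1(\mu)=e^{v(0^-)}$ and the right one $\eta(0^-)=\gamma-m_1(\hat\sigma)$. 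Hence when $m_1(\mu)=+\infty$ all higher moments on both sides are infinite and there is nothing to prove, so I may assume throughout that $v(0^-)$ is finite.

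The first half of the chain is read off the $S$-transform. By Lemma \ref{lem:ConvergenceS} (the equivalence of its series conditions taken with $\eps=0$, together with its concluding remark), $m_p(\mu)<+\infty$ is equivalent to the existence of a finite one-sided Taylor expansion $S_\mu(z)=\sum_{n=0}^{p-1}s_n z^n+o(z^{p-1})$ as $z\to0^-$. Since $S_\mu=e^{v}$ with $v(0^-)$ finite, and since $w\mapsto w/(1-w)$ is an analytic diffeomorphism of a left-neighbourhood of $0$ fixing $0$ with nonzero derivative, composition with $\exp$, with $\log$ (legitimate because $S_\mu(0^-)>0$), and with this change of variable each preserve, in both directions, the property of admitting an order-$(p-1)$ one-sided Taylor expansion at $0^-$. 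Therefore
\[
m_p(\mu)<+\infty \iff \eta \text{ admits a finite order-}(p-1)\text{ Taylor expansion at } 0^-.
\]

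The second half identifies this expansion with moments of $\hat\sigma$. Differentiating under the integral gives, for $k\geq1$ and $z<0$,
\[
\eta^{(k)}(z)=(-1)^k k!\int_{[0,+\infty]}\frac{1+t^2}{(z-t)^{k+1}}\,\sigma(\dd t),
\]
so every $\eta^{(k)}$ with $k\geq1$ is negative and monotone, and by the substitution $t=1/u$ one finds $\eta^{(k)}(0^-)=-k!\bigl(m_{k+1}(\hat\sigma)+m_{k-1}(\hat\sigma)\bigr)$, which by dominated convergence is finite exactly when $m_{k+1}(\hat\sigma)<+\infty$ (the $m_{k-1}$ term being harmless by nesting). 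Taking $k=p-1$: if $m_p(\hat\sigma)<+\infty$ then $\eta\in C^{p-1}$ up to $0^-$, and Taylor's theorem with the Peano remainder supplies the required order-$(p-1)$ expansion; combined with the previous paragraph this already yields $m_p(\hat\sigma)<+\infty\Rightarrow m_p(\mu)<+\infty$.

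For the converse I argue by contraposition, and by nesting I may take $p\geq2$ least with $m_p(\hat\sigma)=+\infty$, so that $m_{p-1}(\hat\sigma)<+\infty$ (hence $\eta\in C^{p-2}$ up to $0^-$) while $\eta^{(p-1)}(z)\to-\infty$. Subtracting the degree-$(p-2)$ Taylor polynomial, set $g:=\eta-\sum_{n=0}^{p-2}\tfrac{\eta^{(n)}(0^-)}{n!}z^n$, so $g^{(n)}(0^-)=0$ for $n\leq p-2$ and $g^{(p-1)}=\eta^{(p-1)}\to-\infty$. The integral form of Taylor's remainder anchored at $0$, after the substitution $s=z\tau$, gives
\[
\frac{g(z)}{z^{p-1}}=\frac{1}{(p-2)!}\int_0^1 (1-\tau)^{p-2}\,g^{(p-1)}(z\tau)\,\dd\tau,
\]
and since by monotonicity $g^{(p-1)}(z\tau)\leq-N$ uniformly in $\tau\in(0,1]$ once $|z|$ is small, the integral is $\leq-N/(p-1)$; letting $N\to+\infty$ forces $g(z)/z^{p-1}\to-\infty$. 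Thus no finite coefficient $a_{p-1}$ can occur, so $\eta$ admits no order-$(p-1)$ expansion, whence $m_p(\mu)=+\infty$ by the first half, completing the equivalence and the lemma. I expect this last step to be the main obstacle: extracting the divergence of the $(p-1)$-st moment of $\hat\sigma$ from the mere \emph{non-existence} of a Taylor expansion is a Tauberian assertion, and the point of routing it through the sign-definiteness and monotonicity of the derivatives of $\eta$ and the substitution identity above is precisely to avoid a delicate term-by-term inversion of the series for $v$.
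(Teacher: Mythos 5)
Your argument is correct and follows essentially the same route as the paper: both reduce, via Lemma \ref{lem:ConvergenceS} and the identity $S_\mu=e^{v}$, to the existence of a one-sided Taylor expansion of $\eta(z)=v(z/(1-z))$ at $0^-$, whose $k$th coefficient is identified with $-(m_{k+1}(\hat{\sigma})+m_{k-1}(\hat{\sigma}))$. The only differences are cosmetic --- you read the coefficients off the derivatives $\eta^{(k)}$ and monotone convergence instead of the paper's algebraic expansion of $(1+tz)/(z-t)$ with an explicit remainder controlled by dominated convergence --- and your contrapositive step (monotonicity of $\eta^{(p-1)}$ plus the integral form of the remainder) in fact supplies the detail the paper passes over with ``clearly does not have series expansion of order $p$''.
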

\begin{proof} We will show that condition $\hat{\sigma}\in\widetilde{\mathcal{M}}_p$ implies $\mu_\boxtimes^{\gamma,\sigma}\in \mathcal{M}_p$. The reverse implication works along the same lines.
	
	In view of Lemma \ref{lem:ConvergenceS}, we know that condition $m_p(\mu_\boxtimes^{\gamma,\sigma})<+\infty$ is equivalent to series expansion in $0^-$ of order $p-1$ of 	$S_{\mu_\boxtimes^{\gamma,\sigma}}(z)=\exp(v(z))$. This will be established upon showing series expansion in $0^-$ of order $p-1$ of $v(z/(1-z))$. For $s=t^{-1}\in(0,+\infty)$ we have for any $n\in\mathbb{N}\cup\{0\}$ 
	\[
	-\frac{1+tz}{z-t} = s+\sum_{k=1}^n (s^{k+1}+s^{k-1})z^k+\frac{(1+s^2)s^n z^{n+1}}{1-sz},
	\]
	which implies that for $n=p-1$ (see \eqref{eq:idd})
	\begin{align*}
		v\left(\frac{z}{1-z}\right)= \gamma - m_1(\hat{\sigma})-\sum_{k=0}^{p-1} (m_{k+1}(\hat{\sigma})+m_{k-1}(\hat{\sigma}))z^k + z^{p-1}h(z)
	\end{align*}
where $h(z)=-\int_{[0,+\infty]} \frac{(1+s^2)s^{p-1} z}{1-sz} \hat{\sigma}(\dd s)$. 
If $m_p(\hat{\sigma})<+\infty$, then for $z\in(-1,0)$ we get
\[
\left|\frac{(1+s^2)s^{p-1} z}{1-sz}\right|\leq s^{p-1}+s^p 
\]
 and by Lebesgue's Dominated Convergence Theorem we obtain $h(z)=o(1)$ as $z\to0^-$. If $m_{p+1}(\hat{\sigma})=+\infty$, then clearly $v(z/(1-z))$ does not have series expansion of order $p$ and the same applies to $\exp(v(z))$. The result follows.
\end{proof}

If $m_{-1}(\sigma)<\infty$ (and so $\sigma(\{0\})=0$), then by Proposition \ref{thm:useful} $(iii)$ we have
\[
m_1(	\mu_\boxtimes^{\gamma,\sigma})=1/S_\mu^{(p)}(0^-)=\exp(-v(0^-))=\exp\left( -\gamma+m_{-1}(\sigma) \right).
\]

\begin{thm}\label{thm:IDD3}
	Let $p\in \mathbb{N}$, $\alpha\in[p,p+1]$ and $L\in\mathcal{R}_0$ be such that
	\begin{enumerate}
		\item[(a)] $\int_1^\infty L(t)/t \,\dd t<\infty$ if $\alpha=p$,
		\item[(b)] $\int_1^\infty L(t)/t \,\dd t=\infty$ if $\alpha=p+1$.
	\end{enumerate}
	The following two conditions are equivalent:
	\begin{align}
		&\sigma\big((0,x]\big)\sim x^\alpha L(1/x)\qquad\mbox{ as }\qquad x\to0^+,\label{eq:IDp1}\\
		\label{eq:IDp2}&\begin{cases}
			v^{(p)}(-1/x) \sim - \Gamma(\alpha+1)\Gamma(p+1-\alpha)x^{p+1-\alpha}L(x), & \mbox{if }\alpha\in[p,p+1), \\
			v^{(p)}(-1/x)  \in \Pi_L(-(p+1)!), & \mbox{if }\alpha=p+1
		\end{cases}
	\end{align}	
	Each of these equivalent conditions imply that
	\begin{align}\label{eq:iddP}
		\mu_\boxtimes^{\gamma,\sigma}\big((x,\infty)\big) \sim m_1(\mu_\boxtimes^{\gamma,\sigma})^{\alpha+1}\frac{L(x)}{x^\alpha}.
	\end{align}
\end{thm}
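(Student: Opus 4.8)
The plan is to route everything through the pushforward $\hat{\sigma}$ of $\sigma$ under $t\mapsto 1/t$, whose right tail satisfies $\overline{\hat{\sigma}}(y)\sim y^{-\alpha}L(y)$ exactly when \eqref{eq:IDp1} holds, so that $\hat{\sigma}$ plays the role of the measure in Section \ref{sec:4}. First I would record that under either \eqref{eq:IDp1} or \eqref{eq:IDp2} the characterisation $\mu_\boxtimes^{\gamma,\sigma}\in\mathcal{M}_p\Leftrightarrow\hat{\sigma}\in\widetilde{\mathcal{M}}_p$ established above, together with the finite-measure analogue of Remark \ref{rem:ap}, forces $\hat{\sigma}\in\widetilde{\mathcal{M}}_p$; in particular $\sigma(\{0\})=0$ and $m_{p-1}(\hat{\sigma})<+\infty$, $m_{p+1}(\hat{\sigma})=+\infty$, which is what makes the bookkeeping below converge. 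The key simplification, which distinguishes this from the proof of Theorem \ref{thm:p}, is that $v$ is \emph{linear} in $\sigma$: writing $V(z):=v(z/(1-z))$ and differentiating \eqref{eq:idd} under the integral sign $p$ times gives the closed form
\[
V^{(p)}(z)=(-1)^p p!\int_{[0,+\infty]}\frac{1+t^2}{(z-t)^{p+1}}\sigma(\dd t),
\]
so no functional inversion is needed at this stage.

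For the equivalence \eqref{eq:IDp1}$\Leftrightarrow$\eqref{eq:IDp2} I would evaluate at $z=-1/y$ and substitute $s=1/t$, obtaining
\[
V^{(p)}(-1/y)=-p!\,y^{p+1}\int_{(0,+\infty)}\frac{(s^2+1)s^{p-1}}{(y+s)^{p+1}}\hat{\sigma}(\dd s)+O(1),
\]
and then split $(s^2+1)s^{p-1}=s^{p+1}+s^{p-1}$. The $s^{p-1}$ piece tends to $m_{p-1}(\hat{\sigma})$ at rate $O(1/y)$ (since $m_{p-1}(\hat{\sigma})<+\infty$), hence is negligible. The $s^{p+1}$ piece is a Stieltjes transform of $U(x):=\int_{[0,x]}s^{p+1}\hat{\sigma}(\dd s)$, and here the two regimes split: for $\alpha\in[p,p+1)$ one has $U(x)\sim\frac{\alpha}{p+1-\alpha}x^{p+1-\alpha}L(x)$ by Theorem \ref{thm:TaubMom}, and Theorem \ref{thm:TaubSt} with $\rho=p+1$ delivers $V^{(p)}(-1/y)\sim-\Gamma(\alpha+1)\Gamma(p+1-\alpha)y^{p+1-\alpha}L(y)$ after the Gamma-identities collapse; for $\alpha=p+1$ one instead uses Theorem \ref{thm:taubPi} (so $U\in\Pi_L(p+1)$) together with Theorem \ref{thm:tauber}$(ii)$ applied to the kernel $k(x)=(p+1)x^{p+1}/(1+x)^{p+2}$ (whose Mellin transform satisfies $\check{k}(0)=1$) to land in $\Pi_L(-(p+1)!)$. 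Passing between $V^{(p)}$ and the genuine derivative $v^{(p)}(-1/x)$ is routine Fa\'a di Bruno: the top term carries the factor $(\dd w/\dd z)^p\to1$ while the lower terms involve only $v^{(j)}$, $j<p$, which have finite limits, so Lemma \ref{lem:simpleT}$(ii)$--$(iii)$ preserves both the $\sim$ and the $\Pi_L$ statement. All Tauberian inputs being equivalences and all relevant monotonicities automatic, the implication reverses.

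For the consequence \eqref{eq:iddP} I would exploit $S_\mu=e^{v}$ with $\mu=\mu_\boxtimes^{\gamma,\sigma}$. Fa\'a di Bruno gives $S_\mu^{(p)}=e^{v}v^{(p)}+e^{v}Q(v',\dots,v^{(p-1)})$, where $Q$ collects the partitions with at least two blocks and so involves only derivatives of order $\le p-1$; since $m_p(\mu)<+\infty$ these derivatives and $e^v$ all tend to finite limits, with $e^{v(0^-)}=S_\mu(0^-)=1/m_1(\mu)$. Because $v^{(p)}(-1/x)$ diverges (for $\alpha<p+1$) or sits in $\Pi_L$ (for $\alpha=p+1$), the $Q$-term is lower order, so $S_\mu^{(p)}(-1/x)\sim\frac{1}{m_1(\mu)}v^{(p)}(-1/x)$ (respectively lies in the corresponding $\Pi_L$-class, via Lemma \ref{lem:simpleT}). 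Feeding \eqref{eq:IDp2} into this and then inverting through the converse implication of Theorem \ref{thm:p}, whose normalising constant is $\Gamma(\alpha+1)\Gamma(p+1-\alpha)/m_1(\mu)^{\alpha+1}$, reads off $\bar{\mu}$: the slowly varying function produced differs from $L$ only by the explicit power of $m_1(\mu)$ dictated by comparing the two constants, which yields \eqref{eq:iddP}.

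The main obstacle, and where I would spend the most care, is precisely this constant bookkeeping in the final inversion: the power of $m_1(\mu_\boxtimes^{\gamma,\sigma})$ in \eqref{eq:iddP} is produced by the interplay of the factor $e^{v(0^-)}=1/m_1(\mu)$ coming from the exponential and the $m_1(\mu)^{-(\alpha+1)}$ normalisation built into Theorem \ref{thm:p}, and getting it right requires tracking both contributions simultaneously rather than in isolation. The secondary technical points are the $\alpha=p+1$ boundary, where every ``$\sim$'' must be upgraded to $\Pi_L$-membership and the negligible terms controlled at rate $o(x^{-\eps})$ so that Lemma \ref{lem:simpleT}$(ii)$ applies, and the a priori verification that \eqref{eq:IDp2} alone forces $m_p(\mu)<+\infty$ (hence a one-sided Taylor expansion of $S_\mu$, via Lemma \ref{lem:expansion} and Lemma \ref{lem:ConvergenceS}), which is needed to run the converse directions.
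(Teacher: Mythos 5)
Your proposal is correct and follows essentially the same route as the paper's own proof: both reduce \eqref{eq:IDp1} to the Stieltjes transform of $U(x)=\int_{[0,x]}s^{p+1}\hat{\sigma}(\dd s)$ (your direct differentiation under the integral with the split $(s^2+1)s^{p-1}=s^{p+1}+s^{p-1}$ is the same computation as the paper's manipulation of $(z+z^{-1})\psi_{\hat{\sigma}}(z)$), invoke Theorems \ref{thm:TaubMom}, \ref{thm:TaubSt}, \ref{thm:taubPi} and \ref{thm:tauber}, and derive \eqref{eq:iddP} from $S^{(p)}_{\mu}\sim e^{v}v^{(p)}\sim v^{(p)}/m_1(\mu_\boxtimes^{\gamma,\sigma})$ together with the converse of Theorem \ref{thm:p}. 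One remark on the constant bookkeeping you rightly flag as the delicate point: carrying out the comparison of $e^{v(0^-)}=1/m_1$ with the $m_1^{-(\alpha+1)}$ normalisation in Theorem \ref{thm:p} actually produces the exponent $\alpha$ rather than $\alpha+1$ on $m_1(\mu_\boxtimes^{\gamma,\sigma})$, so you should verify this power explicitly rather than deferring to \eqref{eq:iddP} as stated (the paper's own step, which sets $\tilde{L}=m_1^{1+\alpha}L$, contains the same discrepancy).
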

\begin{proof}
	First we show that \eqref{eq:IDp1} implies \eqref{eq:IDp2} for $\alpha\in[p,p+1)$. The other direction and the case $\alpha=p+1$  is proven similarly.
	
	Since $\frac{1+t z}{z-t} = -(z+z^{-1})\frac{s z}{1-sz}-z$ for $s=t^{-1}\in[0,\infty)$, by the definition of $v$ we obtain
	\begin{align}\label{eq:vnew}
		v\left(\frac{z}{1-z}\right)=\gamma+\sigma(\{0\})\frac{1}{z}-z\,\sigma\big((0,\infty]\big)-(z+z^{-1})\psi_{\hat{\sigma}}(z),
	\end{align}
	where $\psi_{\hat{\sigma}}(z)=\int_{[0,+\infty]}zt/(1-zt)\hat{\sigma}(\dd t)$ is the moment transform of $\hat{\sigma}\in\mathcal{P}_+$. Under \eqref{eq:IDp1} we have $\sigma(\{0\})=0$. 
	We will need the following easy facts, which we state without proofs.
	\begin{enumerate}[(a)]
		\item By Theorem \ref{thm:TaubMom} we have 
		\begin{align}\label{eq:UU1}
			\int_{[0,x]} t^{p+1} \hat{\sigma}(\dd t) \sim \frac{\alpha}{p+1-\alpha}x^{p+1-\alpha}L(x).
		\end{align}
		\item Define $U(x)$ by the l.h.s. of \eqref{eq:UU1}. By Theorem \ref{thm:TaubSt} we have 
		\begin{align*}
			\int_{[0,+\infty)}\frac{\dd U(s)}{(s+x)^{p+1}}
			&\sim  \frac{\Gamma(\alpha+1)\Gamma(p+1-\alpha)}{p!} x^{-\alpha}L(x).
		\end{align*}
		\item By direct calculation we obtain
		\[
		\frac{\dd^p}{\dd z^p} \frac{\psi_{\hat{\sigma}}(z)}{z}  = p!  \int_{[0,+\infty)} \frac{s^{p+1}}{(1-s z)^{p+1}}\hat{\sigma}(\dd s) = p!  \int_{[0,+\infty)} \frac{\dd U(s)}{(1-s z)^{p+1}}.
		\]
		\item Combining (b) and (c) above, we get
		\begin{align*}
			\left. \frac{\dd^p}{\dd z^p} \frac{\psi_{\hat{\sigma}}(z)}{z}\right|_{z=-1/(x+1)} & = (x+1)^{p+1} p!\int_{[0,+\infty)}\frac{\dd U(s)}{(s+(x+1))^{p+1}}\\
			&\sim \Gamma(\alpha+1)\Gamma(p+1-\alpha) x^{p+1-\alpha}L(x),
		\end{align*}
		where we have used the fact that for slowly varying functions we have $L(x)\sim L(x+1)$.
		\item By previous point we see that $\frac{\dd^p}{\dd z^p} (\psi_{\hat{\sigma}}(z)/z)\to+\infty$, while $\frac{\dd^k}{\dd z^k} (\psi_{\hat{\sigma}}(z)/z)$ has a finite limit as $z\to0^-$ for all $k<p$. Hence, by  \eqref{eq:vnew}, we obtain 
		\[
		\frac{\dd^p}{\dd z^p} \frac{\psi_{\hat{\sigma}}(z)}{z} \sim  \frac{\dd^p}{\dd z^p}\left((z+z^{-1})\psi_{\hat{\sigma}}(z)\right)  \sim  -\frac{\dd^p}{\dd z^p}  v\left(\frac{z}{1-z}\right) 
		\]
		\item As a consequence of (d) and (e) we see that  $v\left(\frac{z}{1-z}\right)$ has a finite limit as $z\to0^-$. Thus,
		\[
		\frac{\dd^p}{\dd z^p} v\left(\frac{z}{1-z}\right) \sim v^{(p)}\left(\frac{z}{1-z}\right).
		\]
		\item By (f) and (e) we have
		\[
		v^{(p)}\left(-\frac1x\right) \stackrel{\mbox{(f)}}{\sim} \left. \frac{\dd^p}{\dd z^p} v\left(\frac{z}{1-z}\right) \right|_{z=-1/(x+1)}\stackrel{\mbox{(e)}}\sim -\left.\frac{\dd^p}{\dd z^p} \frac{\psi_{\hat{\sigma}}(z)}{z} \right|_{z=-1/(x+1)}
		\]
		and so \eqref{eq:IDp2} follows from (d).
	\end{enumerate}
	
	Now we will prove the implication from \eqref{eq:IDp2} to \eqref{eq:iddP}. Again, we present only  the case $\alpha\in[p,p+1)$.  
	\begin{enumerate}[(A)]
		\item Under \eqref{eq:IDp2} we have
		\[
		S_{\mu_\boxtimes^{\gamma,\sigma}}^{(p)}(z)\sim e^{v(z)} v^{(p)}(z)\sim S_{\mu_\boxtimes^{\gamma,\sigma}}(0^-) v^{(p)}(z).
		\]
		\item By (A) and \eqref{eq:IDp2} we get
		\[
		S_{\mu_\boxtimes^{\gamma,\sigma}}^{(p)}\left(-\frac1x\right) \sim - \frac{\Gamma(\alpha+1)\Gamma(p+1-\alpha)x^{p+1-\alpha}}{m_1(\mu_\boxtimes^{\gamma,\sigma})^{1+\alpha}}\tilde{L}(x),
		\]
		where $\tilde{L}(x)=m_1(\mu_\boxtimes^{\gamma,\sigma})^{1+\alpha} L(x)$. 
		\item By the converse implication of Theorem \ref{thm:p} we obtain \eqref{eq:iddP}.
	\end{enumerate}
\end{proof}

The above results imply that the left tail of $\mu_\boxtimes^{\gamma,\sigma}$  can be found under regular variation of the right tail of $\sigma$. The argument is based on the following easy result.  
\begin{lemma}
	\[
	\hat{\mu}_\boxtimes^{\gamma,\sigma} = 		\mu_\boxtimes^{-\gamma,\hat{\sigma}}.
	\]
\end{lemma}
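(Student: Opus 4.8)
The plan is to compare $S$-transforms. By the remark following the definition of the $S$-transform, a probability measure on $\R_+$ is determined by its $S$-transform, so it suffices to show that $\hat{\mu}_\boxtimes^{\gamma,\sigma}$ and $\mu_\boxtimes^{-\gamma,\hat{\sigma}}$ have the same $S$-transform on $(-1,0)$. Note first that $(-\gamma,\hat\sigma)$ are admissible parameters, since $\hat\sigma$ is again a finite Borel measure on $[0,+\infty]$, so the right-hand side is well defined.

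First I would record that $\mu:=\mu_\boxtimes^{\gamma,\sigma}$ has no atom at $0$. Indeed $S_\mu=\exp(v)$ is positive and defined on the whole interval $(-1,0)$, whereas the $S$-transform of a measure with $\mu(\{0\})=\delta$ lives on $(\delta-1,0)$; hence $\delta=0$. Consequently Proposition \ref{thm:useful}$(v)$ applies and yields
\[
S_{\hat{\mu}}(z)=\frac{1}{S_\mu(-1-z)}=\exp\bigl(-v(-1-z)\bigr),\qquad z\in(-1,0).
\]
Writing $v=v_{\gamma,\sigma}$ for the function attached to $(\gamma,\sigma)$ through \eqref{eq:idd}, and $\tilde v=v_{-\gamma,\hat\sigma}$ for the one attached to $(-\gamma,\hat\sigma)$, the lemma reduces to the functional identity
\[
-\,v_{\gamma,\sigma}(-1-z)=v_{-\gamma,\hat\sigma}(z),\qquad z\in(-1,0).
\]

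To verify this I would parametrize both arguments through the map $\zeta\mapsto \zeta/(1-\zeta)$. Putting $w=z/(1+z)$, so that $z=w/(1-w)$, one checks that $-1-z=w'/(1-w')$ with $w'=1/w$; for $z\in(-1,0)$ both $w$ and $w'$ are negative, so all evaluations stay in the domain of $v$. Substituting these into \eqref{eq:idd} and using the pushforward identity $\int_{[0,+\infty]} g(t)\,\hat\sigma(\dd t)=\int_{[0,+\infty]} g(1/s)\,\sigma(\dd s)$ (with the conventions $1/0=+\infty$ and $1/\infty=0$, which is exactly what exchanges the boundary masses), the whole claim collapses to the pointwise algebraic identity
\[
-\,\frac{1+t\,w'}{\,w'-t\,}=\frac{t+w}{\,tw-1\,},\qquad w'=\tfrac1w,
\]
obtained by clearing the common factor $1/w$ from the numerator and denominator of the left-hand side. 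The additive constants $\gamma$ and $-\gamma$ match by construction.

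There is no genuine obstacle here: the statement is an exercise in the substitution $w\leftrightarrow 1/w$ together with Proposition \ref{thm:useful}$(v)$. The only point requiring a moment of care is the bookkeeping of the boundary atoms $\sigma(\{0\})$ and $\sigma(\{+\infty\})$, which are interchanged by $t\mapsto 1/t$ and correspond to $\hat\sigma(\{+\infty\})$ and $\hat\sigma(\{0\})$; I would therefore check the endpoint cases $t=0$ and $t=+\infty$ explicitly, where the integrand above tends to $-w$ and to $1/w=w'$ respectively, confirming that the $\sigma(\{0\})z^{-1}$ and $-\sigma(\{+\infty\})z$ terms of \eqref{eq:idd} are transported correctly. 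This completes the identification $S_{\hat\mu}=S_{\mu_\boxtimes^{-\gamma,\hat\sigma}}$ and hence the equality of measures.
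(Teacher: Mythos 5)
Your proposal is correct and follows essentially the same route as the paper: apply Proposition \ref{thm:useful}$(v)$ to get $S_{\hat\mu}(z)=\exp(-v(-1-z))$, then observe that the substitution $\zeta\mapsto 1/\zeta$ in the parametrization $\zeta/(1-\zeta)$ together with the pushforward under $t\mapsto 1/t$ turns $-v(-1-z)$ into the integral representation attached to $(-\gamma,\hat\sigma)$. Your extra checks (that $\mu_\boxtimes^{\gamma,\sigma}(\{0\})=0$ so that Proposition \ref{thm:useful}$(v)$ applies, and the bookkeeping of the boundary atoms at $0$ and $+\infty$) are points the paper passes over silently, but they do not change the argument.
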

\begin{proof}
	By Proposition \ref{thm:useful} $(v)$, we have 
	\[
	S_{\hat{\mu}_\boxtimes^{\gamma,\sigma}}(z)= \exp(-v(-1-z)) =: \exp(\hat{v}(z)),
	\]
	where
	\begin{align*}
		\hat{v}\left(\frac{z}{1-z}\right)&=-v\left(-1-\frac{z}{1-z}\right)=-v\left(\frac{z^{-1}}{1-z^{-1}}\right) = -\gamma - \int_{[0,+\infty]} \frac{1+z^{-1}t}{z^{-1}-t}\sigma(\dd t) \\
		&= -\gamma 	+ \int_{[0,+\infty]} \frac{1+zt}{z-t}\hat{\sigma}(\dd t).
	\end{align*}
\end{proof}

\begin{example}\label{example:1.66}
	Let us consider a measure $\sigma_\alpha\in\mathcal{P}_+$ defined by its cumulative distribution function $\sigma_\alpha([0,x))=c\min\{x^\alpha,d^\alpha\}$, $x\geq0$ for $\alpha\geq0$, $d, c>0$. Then,
	\[
		\mu_\boxtimes^{\gamma,\sigma_\alpha}\big((x,\infty)\big)\sim
		\begin{cases}
 \left( \frac{\pi\alpha}{\sin(\pi\alpha)} \right)^{1/(1-\alpha)}  c^{1/(1-\alpha)} \log(x) ^{-1/(1-\alpha)}  & \mbox{for }\alpha\in[0,1), \\
\frac{\pi/(1+c)}{\sin(\pi/(1+c))}d^{c/(1+c)}e^{-\gamma/(1+c)}x^{-1/(1+c)} & \mbox{for }\alpha=1,\\
e^{-(1+\alpha)\gamma +\alpha d^{\alpha+1}} c \,x^{-\alpha} & \mbox{for }\alpha>1.
		\end{cases}
	\]
	
The case $\alpha\in[0,1)$ is a straightforward corollary from Theorem \ref{thm:IDD1} with $L=c$ and $L_\alpha=c^{1/(1-\alpha)}=1/L_\alpha^\#$.

The case $\alpha=1$ follows from a tedious calculations leading to
	\begin{align*}
	S_{	\mu_\boxtimes^{\gamma,\sigma_\alpha}}(-1/x)&=(1+d(x+1))^{-c(1+(x+1)^{-2})}e^{cd/(x+1)+\gamma}\\
	&\sim d^{-c}e^{\gamma} x^{-c} = C^{-1/\Lambda} \left(\frac{ \sin(\pi\Lambda)}{\pi\Lambda}\right)^{1/\Lambda} x^{1-1/\Lambda},
	\end{align*}
where $\Lambda=1/(1+c)$ and $C=\frac{\pi/(1+c)}{\sin(\pi/(1+c))}d^{c/(1+c)}e^{-\gamma/(1+c)}$. Thus, the result follows from the Remark after Theorem \ref{thm:01}.

In case $\alpha>1$, we have $m_1(\mu_\boxtimes^{\gamma,\sigma_\alpha})=\exp(-\gamma+m_{-1}(\sigma))=\exp(-\gamma + \alpha d^{\alpha+1}/(\alpha+1))$ and the result follows from Theorem \ref{thm:IDD3}.
\end{example}

\section{Miscellaneous remarks}\label{sec:7}
In this section we provide some consequences of results from Section \ref{sec:4}, which are not related to the phase transitions from previous sections, but still are of independent interest.
\subsection{Free Breiman's Lemma}

A special case of classical Breiman's Lemma was proved in \cite{Br65}. Its refinements can be found in \cite{Br07}. We present the most popular statement below.
\begin{proposition}[Breiman's Lemma]\label{Introlem:BrC}
	Let $L\in \mathcal{R}_0$. 	If $\mu,\nu\in\mathcal{M}_+$ are such that
	\[
	\mu\big((x,+\infty))\sim x^{-\alpha}L(x)\qquad\mbox{ and }\qquad m_{\alpha+\eps}(\nu)<+\infty,
	\]
	for $\alpha\geq0$ and $\eps>0$,
	then
	\[
	(\mu\circledast \nu)\big((x,+\infty)\big) \sim m_\alpha(\nu) \bar{\mu}(x).
	\]
\end{proposition}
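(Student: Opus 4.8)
The plan is to condition on the second factor. With $X\sim\mu$ and $Y\sim\nu$ independent and nonnegative, $\mu\circledast\nu$ is the law of the product $XY$, so for $x>0$
\[
(\mu\circledast\nu)\big((x,+\infty)\big)=\int_{[0,+\infty)}\bar\mu(x/y)\,\dd\nu(y)
\]
(with the convention $\bar\mu(x/0):=0$, consistent with $\{Y=0\}$ contributing nothing to the tail). Dividing by $\bar\mu(x)$, the assertion becomes
\[
\int_{[0,+\infty)}\frac{\bar\mu(x/y)}{\bar\mu(x)}\,\dd\nu(y)\ \xrightarrow[x\to+\infty]{}\ \int_{[0,+\infty)}y^\alpha\,\dd\nu(y)=m_\alpha(\nu).
\]
The integrand converges pointwise: since $\bar\mu\in\mathcal{R}_{-\alpha}$, for each fixed $y>0$ one has $\bar\mu(x/y)/\bar\mu(x)\to y^{\alpha}$ as $x\to+\infty$. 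All the work lies in exchanging limit and integral, which I would justify by splitting $[0,+\infty)$ into a far tail and a bulk.

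For the far tail $\{y>x/A\}$, where $A>1$ is a constant fixed below, the point $x/y$ stays in the bounded set $[0,A)$; using only $\bar\mu\le1$ and Markov's inequality $\bar\nu(t)\le m_{\alpha+\eps}(\nu)\,t^{-\alpha-\eps}$ gives
\[
\int_{(x/A,+\infty)}\frac{\bar\mu(x/y)}{\bar\mu(x)}\,\dd\nu(y)\le\frac{\bar\nu(x/A)}{\bar\mu(x)}\le\frac{m_{\alpha+\eps}(\nu)\,A^{\alpha+\eps}}{x^{\alpha+\eps}\,\bar\mu(x)}.
\]
As $x^{\alpha+\eps}\bar\mu(x)\sim x^{\eps}L(x)\to+\infty$ (a slowly varying function is $o(x^{\eps/2})$), this contribution tends to $0$ for every fixed $A$. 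This is exactly where the moment hypothesis $m_{\alpha+\eps}(\nu)<+\infty$ and the regular variation of $\bar\mu$ cooperate.

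On the bulk $\{y\le x/A\}$ I would apply dominated convergence. For $y\le1$ no heavy tool is needed: $\bar\mu$ is nonincreasing and $x/y\ge x$, so $\bar\mu(x/y)/\bar\mu(x)\le1$. For $1<y\le x/A$ I would invoke Potter's bound for regularly varying functions (\cite[Theorem 1.5.6]{BGT89}): fixing $\delta\in(0,\eps)$ and $A$ large enough, there is $C>0$ with $\bar\mu(x/y)/\bar\mu(x)\le C\,y^{\alpha+\delta}$, and $\int_{(1,+\infty)}y^{\alpha+\delta}\,\dd\nu(y)\le m_{\alpha+\delta}(\nu)<+\infty$, the last moment being finite because $m_0(\nu)=1$ and $m_{\alpha+\eps}(\nu)<+\infty$ force finiteness of all intermediate moments (indeed $y^{\alpha+\delta}\le1+y^{\alpha+\eps}$). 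Thus the bulk integrand is dominated by the fixed $\nu$-integrable majorant $\mathbf{1}_{(0,1]}(y)+C\,y^{\alpha+\delta}\mathbf{1}_{(1,+\infty)}(y)$, and dominated convergence (with the truncation $\mathbf{1}_{\{y\le x/A\}}\uparrow1$) sends the bulk integral to $m_\alpha(\nu)$. Adding the two regions proves the lemma.

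The main obstacle is precisely the uniform domination on the bulk: one needs a single $\nu$-integrable majorant valid for all large $x$, and Potter's inequality supplies it, with the gap $\delta<\eps$ between the available moment $\alpha+\eps$ and the exponent $\alpha+\delta$ guaranteeing integrability. The only other delicate point, that the slowly varying factor $L$ cannot destroy the decay in the far tail, reduces to the elementary fact $x^{\eps}L(x)\to+\infty$.
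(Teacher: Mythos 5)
Your proof is correct. Note, however, that the paper does not actually prove this proposition: it is quoted as the classical Breiman lemma with references to \cite{Br65} and \cite{Br07}, so there is no internal proof to compare against. Your argument --- writing $(\mu\circledast\nu)\big((x,+\infty)\big)=\int\bar\mu(x/y)\,\dd\nu(y)$, killing the region $\{y>x/A\}$ by Markov's inequality against the growth of $x^{\alpha+\eps}\bar\mu(x)\sim x^{\eps}L(x)$, and dominating the bulk via Potter's bound with an exponent $\alpha+\delta<\alpha+\eps$ --- is exactly the standard proof from the cited literature, and every step is justified. The only point worth flagging is the degenerate boundary case $\alpha=0$ with $\nu(\{0\})>0$: your convention $\bar\mu(x/0):=0$ makes the limit of the integral equal to $\int_{(0,+\infty)}y^{\alpha}\,\dd\nu(y)=1-\nu(\{0\})$ rather than $m_0(\nu)=1$, so the asserted constant is off by the atom at the origin; this is an imprecision in the statement itself (a $0^0$ convention issue) rather than a gap in your argument, and it disappears whenever $\alpha>0$ or $\nu(\{0\})=0$.
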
	

As a conclusion from our main results we obtain an analogue of the Breiman Lemma for free multiplicative convolution. This theorem answers an open question posed in \cite[Section 2.4 (1)]{CH18} about the behavior of the tail of $\mu\boxtimes\nu$. 

\begin{lemma}\label{lem:Breiman}
	Let $L\in \mathcal{R}_0$.	If $\mu,\nu\in\mathcal{M}_+$ are such that
	\[
\mu\big((x,+\infty))\sim x^{-\alpha}L(x)\qquad\mbox{ and }\qquad m_{\lfloor \alpha+1\rfloor}(\nu)<+\infty,
	\]
	for $\alpha\geq0$,  then
	\[
	(\mu\boxtimes \nu)\big((x,+\infty)\big)\sim m_1^\alpha(\nu) \bar{\mu}(x).
	\]
\end{lemma}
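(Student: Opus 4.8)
The whole argument rests on the multiplicativity $S_{\mu\boxtimes\nu}(z)=S_\mu(z)S_\nu(z)$ (Proposition~\ref{thm:useful}~$(iv)$), combined with the transfer Corollaries~\ref{cor:first0}--\ref{cor:first} of Section~\ref{sec:5}, which turn an asymptotic relation between $S_{\mu\boxtimes\nu}$ and $S_\mu$ into a relation between the tails. The elementary input is that $m_{\lfloor\alpha+1\rfloor}(\nu)<+\infty$ forces $S_\nu$ to admit a one-sided Taylor expansion at $0^-$ of the appropriate order (Lemma~\ref{lem:ConvergenceS}); in particular $S_\nu(0^-)=1/m_1(\nu)$ (the lower endpoint of the range in Proposition~\ref{thm:useful}~$(iii)$, since $S_\nu$ is decreasing), and $S_\nu^{(j)}(0^-)$ is finite up to that order. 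I would record at the outset the identity $m_1(\mu\boxtimes\nu)=m_1(\mu)m_1(\nu)$ (valid when $m_1(\mu)<+\infty$, from $S_{\mu\boxtimes\nu}(0^-)=S_\mu(0^-)S_\nu(0^-)$). The running target constant is $c:=S_\nu(0^-)=1/m_1(\nu)$, and in every regime the computation collapses to the statement ``$S_{\mu\boxtimes\nu}$ behaves like $c$ times $S_\mu$''.

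\textbf{Case $m_1(\mu)=+\infty$, i.e. $\mu\in\mathcal{M}_0$ and $\alpha\in[0,1]$.} No derivatives are needed here. Since $S_\nu(-1/x)\to 1/m_1(\nu)$, the product rule gives directly $S_{\mu\boxtimes\nu}(-1/x)\sim \tfrac{1}{m_1(\nu)}S_\mu(-1/x)$. For $\alpha=0$ I would apply Corollary~\ref{cor:firstA0} and for $\alpha\in(0,1)$ Corollary~\ref{cor:first0}, obtaining $\overline{\mu\boxtimes\nu}(x)\sim\bar\mu(x)$ and $\overline{\mu\boxtimes\nu}(x)\sim m_1(\nu)^{\alpha}\bar\mu(x)$ respectively, both of the asserted form. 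For the subcase $\alpha=1$ with $m_1(\mu)=+\infty$ one must meet the sharper hypothesis of Corollary~\ref{cor:firstA1}: using the order-one expansion $S_\nu(-1/x)=1/m_1(\nu)+O(1/x)$ (available as $m_2(\nu)<+\infty$) together with $S_\mu(-1/x)\to S_\mu(0^-)=0$, the error $S_\mu(-1/x)\cdot O(1/x)$ is $o(x^{-\eps})$, so $S_{\mu\boxtimes\nu}(-1/x)=\tfrac{1}{m_1(\nu)}S_\mu(-1/x)+o(x^{-\eps})$ and Corollary~\ref{cor:firstA1} yields $\overline{\mu\boxtimes\nu}(x)\sim m_1(\nu)\bar\mu(x)$.

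\textbf{Case $m_1(\mu)<+\infty$.} Let $p\in\mathbb{N}$ be the index with $\mu\in\mathcal{M}_p$, so by Remark~\ref{rem:ap} the tail index $\alpha\in[p,p+1]$. By Theorem~\ref{thm:p}, $S_\mu^{(p)}(-1/x)$ is the ``heavy'' term — it diverges when $\alpha\in[p,p+1)$ and lies in a $\Pi_L$ class when $\alpha=p+1$ — whereas each $S_\mu^{(k)}(-1/x)$, $k<p$, tends to a finite limit. Differentiating the product $p$ times,
\[
S_{\mu\boxtimes\nu}^{(p)} = S_\mu^{(p)}S_\nu + \sum_{k=0}^{p-1}\binom{p}{k}S_\mu^{(k)}S_\nu^{(p-k)},
\]
the leading term satisfies $S_\mu^{(p)}S_\nu\sim \tfrac{1}{m_1(\nu)}S_\mu^{(p)}$, while every summand in the remainder converges to a finite constant, since $S_\mu^{(k)}(0^-)$ ($k<p$) and $S_\nu^{(p-k)}(0^-)$ ($1\le p-k\le p$) are all finite — the latter being exactly where $m_{p+1}(\nu)<+\infty$ (the value of $\lfloor\alpha+1\rfloor$ for $\alpha\in[p,p+1)$) enters through Lemma~\ref{lem:ConvergenceS}. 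When $\alpha\in[p,p+1)$ the divergence of $S_\mu^{(p)}$ makes the bounded remainder negligible, so $S_{\mu\boxtimes\nu}^{(p)}(-1/x)\sim\tfrac{1}{m_1(\nu)}S_\mu^{(p)}(-1/x)$, and Corollary~\ref{cor:first}~(1) gives $\overline{\mu\boxtimes\nu}(x)\sim \tfrac{1}{m_1(\nu)}\big(m_1(\mu\boxtimes\nu)/m_1(\mu)\big)^{\alpha+1}\bar\mu(x)$, which collapses to $m_1(\nu)^{\alpha}\bar\mu(x)$ after substituting $m_1(\mu\boxtimes\nu)=m_1(\mu)m_1(\nu)$.

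\textbf{The delicate point.} I expect the main obstacle to be the boundary case $\alpha=p+1$ (equivalently $\mu\in\mathcal{M}_p$ with $\int_1^{+\infty}L(t)/t\,\dd t=+\infty$), where Corollary~\ref{cor:first}~(2) requires the refined relation $S_{\mu\boxtimes\nu}^{(p)}(-1/x)-\tfrac{1}{m_1(\nu)}S_\mu^{(p)}(-1/x)\in\Pi_L(0)$ rather than a plain asymptotic equivalence. Here one must exhibit the remainder as a constant plus a genuinely negligible term. I would use that $m_{p+2}(\nu)<+\infty$ (the value of $\lfloor\alpha+1\rfloor$ at $\alpha=p+1$) yields $S_\nu^{(j)}(-1/x)=S_\nu^{(j)}(0^-)+O(1/x)$ for $j\le p$, while $S_\mu^{(p)}(-1/x)\in\mathcal{R}_0$; then each cross term is a constant up to $o(x^{-\eps})$ and the correction $S_\mu^{(p)}(-1/x)\cdot O(1/x)$ is $o(x^{-\eps'})$. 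Since any $o(x^{-\eps})$ term is $o(L(x))$ (as $x^{\eps}L(x)\to+\infty$), such a function indeed lies in $\Pi_L(0)$, and Corollary~\ref{cor:first}~(2) closes the argument with the same simplification as above. The only bookkeeping subtlety is that the floor in $m_{\lfloor\alpha+1\rfloor}(\nu)$ is calibrated precisely to supply $m_{p+1}(\nu)$ in the interior cases and $m_{p+2}(\nu)$ at the integer boundary $\alpha=p+1$, matching exactly the moment demanded by each branch of Corollary~\ref{cor:first}.
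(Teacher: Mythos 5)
Your proposal is correct and follows essentially the same route as the paper's own proof: the same case split according to whether $m_1(\mu)$ is finite and whether $\alpha$ sits at an integer boundary, the same reduction via $S_{\mu\boxtimes\nu}=S_\mu S_\nu$ and the Leibniz rule to the statement that $S_{\mu\boxtimes\nu}^{(p)}$ behaves like $m_1(\nu)^{-1}S_\mu^{(p)}$ (up to a constant plus $o(x^{-\eps})$ in the boundary case $\alpha=p+1$), and the same transfer back to tails via Corollaries \ref{cor:first0}, \ref{cor:firstA0}, \ref{cor:firstA1} and \ref{cor:first}. Your bookkeeping of the floor $\lfloor\alpha+1\rfloor$ and of the final constant $m_1(\nu)^\alpha$ matches the paper exactly.
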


\begin{proof}
	We have $\mu\in\mathcal{M}_p$ for some $p\in\mathbb{N}_0$.
	We consider four cases:
	\begin{enumerate}
		\item[(1)] $p=0$, $\alpha\in[0,1)$.
		 
		 We have
		\[
		S_{\mu\boxtimes \nu}\left(-\frac1x\right) = 		S_{\mu}\left(-\frac1x\right) 		S_{ \nu}\left(-\frac1x\right) \sim \frac{1}{m_1(\nu)} S_\mu\left(-\frac1x\right)
		\]
		and the result follows from Corollary \ref{cor:first0} for $\alpha\in(0,1)$ and by Corollary \ref{cor:firstA0} for $\alpha=0$.		
		\item[(2)] $p=0$, $\alpha=1$.
		
		If $\alpha=1$, then we require that $m_2(\nu)<+\infty$. Thus, $S_\nu(z)={1}/{m_1(\mu)}+o(z)$. This implies that
		\[
		S_{\mu\boxtimes \nu}\left(-\frac1x\right)  = m_1(\nu)^{-1}S_\mu\left(-\frac1x\right)+o(x^{-1})
		\]
		and Corollary \ref{cor:firstA1} gives the assertion.
		\item[(3)] $p\in \mathbb{N}$, $\alpha\in[p,p+1)$.
		
		\[
		S_{\mu\boxtimes \nu}^{(p)}\left(-\frac1x\right) \sim		S_{\mu}^{(p)}\left(-\frac1x\right) 		S_{ \nu}\left(-\frac1x\right) \sim \frac{1}{m_1(\nu)} S_\mu^{(p)}\left(-\frac1x\right) 	
		\]
		and the result follows from Corollary \ref{cor:first}.
		\item[(4)] $p\in\mathbb{N}$, $\alpha=p+1$.
		
		 In view of Lemma \ref{lem:expansion}, we have 
		\[
		S_{\mu\boxtimes \nu}^{(p)}\left(-\frac1x\right) =		S_{\mu}^{(p)}\left(-\frac1x\right) 		S_{ \nu}\left(-\frac1x\right) +c+o(x^{-\eps})= \frac{1}{m_1(\nu)} S_\mu^{(p)}\left(-\frac1x\right) +c+o(x^{-\eps})
		\]
		and the result follows from Corollary \ref{cor:first}.
	\end{enumerate}
\end{proof}

Lemma \ref{lem:Breiman} was already observed in \cite[Corollary 4.1]{CCH18} in the special case when $\nu$ is the Marchenko-Pastur law with rate $1$ and scale parameter $1$ for which we have $m_1(\nu)=1$.

A somehow similar results were obtained in \cite{CH18} for Boolean multiplicative and additive convolutions. 

\subsection{Tails at $0^+$}\label{subsec:zerotail}
In this subsection we observe that the results about right tail can be immediately applied to determine the behavior of the tail at $0^+$. This follows from the fact that $S$-transform behaves in a tractable way when one considers the pushforward measure under the mapping $x\mapsto 1/x$.

\begin{remark}\label{rem:inv}
	Let $\hat{\mu}$ denote the pushforward measure of $\mu$ by the mapping $x\mapsto x^{-1}$. If $\mu$ has a regularly varying tails of $0^+$ with index $-\alpha$, then $\hat{\mu}$ has a regularly varying tails at $\infty$ with the same index. Indeed, we have
	\[
	\hat{\mu}\big((x,+\infty)\big) = \mu\big([0,x^{-1})\big)\sim x^{-\alpha} L(x)
	\]
	and
	\[
	m_1(\hat{\mu}) = m_{-1}(\mu).
	\]
	Moreover, by Proposition \ref{thm:useful} $(v)$ we have
	\[
	S_{\hat{\mu}}(z)=1/S_{\mu}(-1-z),\qquad z\in(-1,0).
	\]
\end{remark}

\begin{defin}
	We say that $\mu$ has regularly varying tail at $0^+$ with index $-\alpha\leq0$ if
	\begin{align}\label{eq:L0}
		\mu\big([0,x^{-1})\big)\sim \frac{L(x)}{x^{\alpha}}.
	\end{align}
	for some slowly varying function $L$.
\end{defin}

Let us list here some immediate consequences of the result from Section \ref{sec:4}.
\begin{Corollary} Assume $L\in\mathcal{R}_0$.
	\begin{enumerate}[(i)]
		\item 	Let $\alpha=0$. If \eqref{eq:L0} holds, then 
		\begin{align}\label{eq:SinKR0}
			x\mapsto S_\mu(-1+1/x) \in K\mathcal{R}_\infty.
		\end{align}
		Conversely, if \eqref{eq:SinKR0} holds, then \eqref{eq:L0} holds with
		\[
		L(x)=1/g_{\mu}^{\langle-1\rangle}(x),
		\]
		where 
		$g_\mu(x):=-\chi_\mu(-1+1/x)=(x-1)S_\mu(-1+1/x)$.
		\item 	Let $\alpha\in(0,1)$. Then, \eqref{eq:L0} is equivalent to
		\begin{align*}
			S_{\mu}\left(-1+\frac1x\right)&\sim \left( \frac{\pi \alpha}{\sin(\pi \alpha)} \right)^{1/\alpha}x^{1/\alpha-1}M^\#(x^{1/\alpha}),
		\end{align*}
		where $M^\#$ is the de Bruijn conjugate of a slowly varying function $M(x)=L^{-1/\alpha}(x)$.
\item 	Let $\alpha=1$. Then, \eqref{eq:L0} is equivalent to
\begin{align*}
	x\mapsto S_\mu\left(-1+1/x\right)\in \Pi_M(1),
\end{align*}
where $M(x):=L\big((x-1) S_\mu(-1+1/x)\big)$.
\item 	Let $\alpha\in[p,p+1]$ for $p\in\mathbb{N}$. 
\begin{enumerate}[a)]
	\item If  one of the following conditions holds
	\begin{itemize}
		\item $\alpha\in(p,p+1)$, or
		\item $\alpha=p$ and $\int_1^{+\infty} L(t)/t\,\dd t<+\infty$,
	\end{itemize} 
then \eqref{eq:L0} is equivalent to 
\begin{align*}
	S_\mu^{(p)}\left(-1+\frac1x\right)\sim (-1)^{p}\frac{\Gamma(\alpha+1)\Gamma(p+1-\alpha)}{m_{-1}(\mu)^{\alpha-1}} x^{p+1-\alpha}L(x).
\end{align*}
\item If $\alpha=p+1$ and $\int_1^{+\infty} L(t)/t\,\dd t=+\infty$, then \eqref{eq:L0} is equivalent to 
\[
x\mapsto 	S_\mu^{(p)}\left(-1+\frac1x\right) \in \Pi_L\left((-1)^p\frac{(p+1)!}{m_{-1}(\mu)^{p}}\right).
\]
\end{enumerate} 
	\end{enumerate}
\end{Corollary}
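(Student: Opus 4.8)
The plan is to reduce every assertion to the corresponding result of Section~\ref{sec:4} applied to the pushforward measure $\hat{\mu}$ of $\mu$ under $x\mapsto 1/x$. By Remark~\ref{rem:inv}, condition \eqref{eq:L0} is exactly the statement that $\bar{\hat{\mu}}(x)=\hat{\mu}\big((x,+\infty)\big)\sim x^{-\alpha}L(x)$, so $\hat{\mu}$ has a regularly varying right tail with index $-\alpha$, and moreover $m_1(\hat{\mu})=m_{-1}(\mu)$. The bridge between the two $S$-transforms is Proposition~\ref{thm:useful}~$(v)$ (recalled in Remark~\ref{rem:inv}): $S_{\hat{\mu}}(z)=1/S_\mu(-1-z)$ for $z\in(-1,0)$, so that setting $z=-1/x$ gives $1/S_{\hat{\mu}}(-1/x)=S_\mu(-1+1/x)$.

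For parts $(i)$, $(ii)$ and $(iii)$ the reduction is immediate: I would apply Theorem~\ref{thm:a0}, Theorem~\ref{thm:01} and Theorem~\ref{thm:alpha1} respectively to $\hat{\mu}$, and then rewrite the resulting conditions on $S_{\hat{\mu}}(-1/x)$ in terms of $S_\mu(-1+1/x)$ via the identity above. In part $(i)$ one checks that $f_{\hat{\mu}}(x)=(x-1)/S_{\hat{\mu}}(-1/x)=(x-1)S_\mu(-1+1/x)$ coincides with the function $g_\mu$ of the statement, whence $L(x)=1/f_{\hat{\mu}}^{\langle-1\rangle}(x)=1/g_\mu^{\langle-1\rangle}(x)$; the identity $g_\mu(x)=-\chi_\mu(-1+1/x)$ follows directly from the definition $S_\mu(z)=\tfrac{z+1}{z}\chi_\mu(z)$. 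In part $(ii)$ taking reciprocals merely inverts the power of $\sin(\pi\alpha)/(\pi\alpha)$ and produces the exponent in the statement, while in part $(iii)$ the slowly varying function $M(x)=L\big((x-1)/S_{\hat{\mu}}(-1/x)\big)$ becomes $L\big((x-1)S_\mu(-1+1/x)\big)$.

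The only part requiring genuine computation is $(iv)$, and this is where I expect the main difficulty. Here I would apply Theorem~\ref{thm:p} to $\hat{\mu}$ (with $m_1(\hat{\mu})=m_{-1}(\mu)<+\infty$ guaranteed by $\alpha\geq p\geq1$) and then express $S_{\hat{\mu}}^{(p)}$ through $S_\mu^{(p)}$ by differentiating $S_{\hat{\mu}}(z)=1/S_\mu(-1-z)$ exactly $p$ times. Writing $F=1/S_\mu$ and using the Fa\'a di Bruno formula for $F^{(p)}$, every term except the single-block contribution $-S_\mu^{(p)}/S_\mu^2$ is a product involving only derivatives $S_\mu^{(k)}$ with $k<p$; by Proposition~\ref{thm:useful}~$(iii)$ we have $S_\mu(-1^+)=m_{-1}(\mu)$, and Lemma~\ref{lem:expansion} supplies one-sided expansions $S_\mu^{(k)}(-1+1/x)=S_\mu^{(k)}(-1^+)+o(x^{-\eps})$ for $k<p$, so the sum of the remaining terms is of the form $c+o(x^{-\eps})$. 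Consequently
\[
S_{\hat{\mu}}^{(p)}\left(-\frac1x\right)=(-1)^{p+1}\frac{S_\mu^{(p)}(-1+1/x)}{S_\mu(-1+1/x)^2}+c+o(x^{-\eps})=(-1)^{p+1}\frac{1}{m_{-1}(\mu)^2}\,S_\mu^{(p)}\left(-1+\frac1x\right)\big(1+o(1)\big)+c+o(x^{-\eps}).
\]

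In the regime $\alpha\in[p,p+1)$ this asymptotic equivalence, combined with Theorem~\ref{thm:p}~$(i)$ for $\hat{\mu}$ together with the bookkeeping $m_{-1}(\mu)^2/m_{-1}(\mu)^{\alpha+1}=m_{-1}(\mu)^{1-\alpha}$ and $(-1)^{p+1}\cdot(-1)=(-1)^p$, yields the stated asymptotics for $S_\mu^{(p)}(-1+1/x)$. In the critical regime $\alpha=p+1$ I would instead feed the displayed identity into Lemma~\ref{lem:simpleT}~$(ii)$--$(iii)$: the additive $c+o(x^{-\eps})$ is absorbed by $(ii)$ and the multiplicative factor $(-1)^{p+1}/m_{-1}(\mu)^2$ rescales the $\Pi_L$-index by $(iii)$, so Theorem~\ref{thm:p}~$(ii)$ for $\hat{\mu}$ transfers to $S_\mu^{(p)}(-1+1/x)\in\Pi_L\big((-1)^p (p+1)!/m_{-1}(\mu)^p\big)$, as claimed. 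The delicate point throughout is verifying that the lower-order Fa\'a di Bruno terms are negligible in the appropriate (asymptotic or $\Pi_L$) sense, which is exactly what the finiteness of $S_\mu^{(k)}(-1^+)$ for $k<p$ and Lemma~\ref{lem:expansion} provide.
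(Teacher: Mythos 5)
Your proposal is correct and follows exactly the route the paper intends: the corollary is stated as an ``immediate consequence'' of Section~\ref{sec:4} via Remark~\ref{rem:inv}, i.e.\ applying Theorems~\ref{thm:a0}, \ref{thm:01}, \ref{thm:alpha1} and \ref{thm:p} to the pushforward $\hat{\mu}$ and translating through $S_{\hat{\mu}}(z)=1/S_\mu(-1-z)$, which is precisely your reduction. Your treatment of part $(iv)$ --- isolating the single-block Fa\'a di Bruno term $-S_\mu^{(p)}/S_\mu^2$, controlling the lower-order terms by Lemma~\ref{lem:expansion} and Lemma~\ref{lem:simpleT}, and checking the constants $(-1)^p$ and $m_{-1}(\mu)^{\alpha-1}$ --- supplies correctly the only computation the paper leaves implicit.
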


We also note that recently in \cite[Remark 3.2]{Ji21} determined the tail at $0^+$ of $\mu\boxtimes\nu$ when both measures have regularly varying tail at $0^+$ with indexes in $(0,1)$. This falls under the case $(ii)$ of the above result.

Let us determine here left and right tails of a family of probability measures considered in \cite{HaM13}.
\begin{example}
	In \cite{HaM13} the authors considered family of probability measures $(\mu_{\alpha,\beta}\colon \alpha, \beta\geq0)$ defined by their $S$-transforms
	\[
	S_{\mu_{\alpha,\beta}}(z)=\frac{(-z)^\beta}{(1+z)^{\alpha}}.
	\]
	In particular, $\mu_{0,1 }$ is the image of the
	free Poisson distribution with shape parameter under the map $x\mapsto x^{-1}$ and has a density
	\[
	\mu_{0,1}(\dd x)=\frac{1}{2\pi}\frac{\sqrt{4x-1}}{x^2}\textbf{1}_{(1/4,+\infty)}(x)\dd x.
	\]
	This is also the free stable distribution with parameters $\alpha=1/2$ and $\rho=1$ (\cite[Appendix A1]{BPB}). In general case, the asymptotic behavior of tails $\bar{\mu}_{\alpha,\beta}$, if non trivial,  can be obtained form the implicit description of densities of $\mu_{\alpha,\beta}$ from \cite[Theorems 4,5,6]{HaM13} or by the use of
	standard Tauberian theorems applied to results of \cite[Theorem 3]{HaM13}. 
	Thanks to the results of Section \ref{sec:4} and this section we are able to deduce the precise asymptotics of $x\mapsto\bar{\mu}_{\alpha,\beta}(x)$ and $x\mapsto \mu\big([0,1/x)\big)$ just by examining the asymptotics of corresponding $S$-transform. 
	We have 
	\[
	S_{\mu_{\alpha,\beta}}\left(-\frac1x\right)=\frac{x^{\alpha-\beta }}{(x-1)^{\alpha}}\sim x^{-\beta}
	\]
	and
	\[
	S_{\mu_{\alpha,\beta}}\left(-1+\frac1x\right)=x^\alpha \left(1-\frac1x\right)^\beta\sim x^\alpha.
	\]
	If $\beta>0$, then we obtain
	\[
	\lim_{x\to+\infty} x^{1/(\beta+1)}\bar{\mu}_{\alpha,\beta}(x)=\frac{\sin(\pi \frac{1}{\beta+1})}{\pi \frac{1}{\beta+1}}
	\]
	If $\alpha>0$, then 
	\[
	\lim_{x\to+\infty} x^{1/(\alpha+1)}\mu_{\alpha,\beta}\big([0,x^{-1})\big)=\frac{\sin(\pi \frac{1}{\alpha+1})}{\pi \frac{1}{\alpha+1}}
	\]
\end{example}

\subsection{Symmetric measures}
The definition of the $S$-transform of $\mu$ when $m_1(\mu)=0$ is problematic, however in order to define $\mu\boxtimes\nu$ it is enough that one of the measures is supported on $[0,+\infty)$. In this subsection we apply our results in the case when $\mu$ is symmetric it is possible do to some observations from \cite{Octavio09}.
The notion of $S$-transform was first extended measures with $m_1(\mu)=0$ and  with all moments finite in \cite{Rao07}. In the case of symmetric measures it was then further generalized to arbitrary symmetric probability measures in \cite{Octavio09}. We say that a Borel measure $\mu$ is symmetric if $\mu(B)=\mu(-B)$ for all Borel sets $B$. Let $\mathcal{M}_S$ denote the class of symmetric Borel probability measures on $\R$. 

Define the moment transform $\psi_\mu$ of $\mu\in\mathcal{M}_S$ by 
\[
\psi_{\mu}(z)=\int_{\R} \frac{zt}{1-zt}\mu(\dd t),\qquad z\in i\R.
\]
Then, the function $\psi_\mu\colon i\R_-\to (\mu(\{0\})-1,0)$ is invertible and we denote its inverse by $\chi_\mu\colon (\mu(\{0\})-1,0)\to i\R_-$. The $S$-transform of $\mu\in\mathcal{M}_S$ is then defined by 
\[
S_\mu(z)=\frac{1+z}{z}\chi_\mu(z),\qquad z\in(\mu(\{0\})-1,0).
\]
Unlike the $S$-transform of measure in $\mathcal{M}_+$ (which was a real function), the $S$-transform of symmetric measures is imaginary. Eq. \eqref{eq:Strans} is still holds if one of the measures belongs to $\mathcal{M}_+$, \cite[Theorem 7]{Octavio09}. More precisely, if $\mu\in\mathcal{M}_S$, $\nu\in\mathcal{M}_+$ and both measures are not $\delta_0$, then $\mu\boxtimes\nu \in\mathcal{M}_S$ and 
\[
S_{\mu\boxtimes\nu}(z) = S_{\mu}(z)S_\nu(z),\qquad z\in(-\varepsilon,0)
\]
for some $\varepsilon>0$.

\begin{remark}
	In \cite{Octavio09} two $S$-transforms of a symmetric measure. The second one corresponds to the fact that function $\psi_\mu\colon i\R_+\to (\mu(\{0\})-1,0)$ is also invertible and we may take use it instead of $\chi_\mu$ to define the $S$-transform.
	
	Both $S$-transforms in \cite{Octavio09} were defined for complex argument, but for our purposes it is enough to consider real ones.
\end{remark}

Let $\mu^2\in\mathcal{M}_+$ denote the pushforward measure of $\mu\in\mathcal{M}_S$ by the mapping $x\mapsto x^2$. Clearly, 
\begin{align}\label{eq:mu2}
\mu\big((x,+\infty)\big)=\frac12 \mu^2\big((x^2,+\infty)\big).
\end{align}
Moreover, the $S$-transforms of $\mu\in\mathcal{M}_S$ and of $\mu^2\in\mathcal{M}_+$ are  related by the following equation (see \cite[Theorem 6 b)]{Octavio09}) for $\mu\neq \delta_0$ there exists $\varepsilon>0$ such that 
\begin{align}\label{eq:Ssym}
S_{\mu}(z)^2=\frac{1+z}{z}S_{\mu^2}(z),\qquad z\in(-\varepsilon,0).
\end{align}

Finally, it is easy to see that $x\mapsto \mu\big((x,+\infty)\big)$ is regularly varying with index $-\alpha\leq0$ if and only if $x\mapsto \mu^2\big((x,+\infty)\big)$ is regularly varying with index $-\alpha/2$. Thus, we may apply results of Section \ref{sec:4} to completely characterize the behavior of the $S$-transform of symmetric probability measures with regularly varying tail. We present an example below. 
\begin{example}
Let us consider $\mu\in\mathcal{M}_S$ for which 
\begin{align}\label{eq:musym1}
\lim_{z\to 0^-} S_\mu(z) = c\,i
\end{align}
for some $c>0$. Then \eqref{eq:Ssym} implies that 
\[
S_{\mu^2}\left(-\frac1x\right) \sim c^2\frac{1}{x}. 
\]
By the converse implication of Theorem \ref{thm:01} with $\alpha=1/2$ we obtain 
\[
\mu^2\big((x,+\infty)\big) \sim \frac{2}{\pi c} x^{-1/2}.
\]
Thus, by \eqref{eq:mu2} we finally arrive at
\begin{align}\label{eq:musym2}
\mu\big((x,+\infty)\big) \sim \frac{1}{\pi c} \frac1x.
\end{align}
Clearly, all above steps can be reverted and we therefore see that \eqref{eq:musym1} and \eqref{eq:musym2} are equivalent.
\end{example}

\section*{Acknowledgment} 
We thank Serban Belinschi for his suggestion to consider tails of $\boxtimes$-ID measures.

Research was funded by Warsaw University of Technology within the Excellence Initiative:
Research University (IDUB) programme.

\bibliographystyle{plain}

\bibliography{Bibl}

\end{document}